\newtheorem{theorem}{Theorem}[section]
\newtheorem{proposition}[theorem]{Proposition}
\newtheorem{lemma}[theorem]{Lemma}
\theoremstyle{definition}
\newtheorem{remark}[theorem]{Remark}
\newtheorem{definition}[theorem]{Definition}
\newtheorem{assumption}{Assumption}
\numberwithin{equation}{section}
\definecolor{myblue}{RGB}{0,0,128}
\newcommand{\LM}[1]{\hbox{\vrule width.2pt \vbox to#1pt{\vfill \hrule
width#1pt
height.2pt}}}
\newcommand{\LL}{{\mathchoice {\>\LM7\>}{\>\LM7\>}{\,\LM5\,}{\,\LM{3.35}\,}}}
\def\Xint#1{\mathchoice
	{\XXint\displaystyle\textstyle{#1}}%
	{\XXint\textstyle\scriptstyle{#1}}%
	{\XXint\scriptstyle\scriptscriptstyle{#1}}%
	{\XXint\scriptscriptstyle\scriptscriptstyle{#1}}%
	\!\int}
\def\XXint#1#2#3{{\setbox0=\hbox{$#1{#2#3}{\int}$ }
		\vcenter{\hbox{$#2#3$ }}\kern-.57\wd0}}
\def\dashint{\Xint-}
\newcommand{\dx}{\,\mathrm{d}x}
\newcommand{\dy}{\,\mathrm{d}y}
\newcommand{\e}{\varepsilon}
\newcommand{\dist}{{\rm{dist}}}
\newcommand{\w}{\omega}
\newcommand{\R}{\mathbb{R}}
\newcommand{\Z}{\mathbb{Z}}
\newcommand{\N}{\mathbb{N}}
\newcommand{\Q}{\mathbb{Q}}
\renewcommand{\a}{\alpha}
\newcommand{\A}{\mathcal{A}}
\newcommand{\F}{\mathcal{F}}
\renewcommand{\P}{\mathbb{P}}
\newcommand{\D}{\mathrm{D}}
\newcommand{\Am}{\Lambda}
\newcommand{\ie}{\,{i.e.}, }
\begin{document}

\author{Matthias Ruf}
\address[Matthias Ruf]{Section de math\'ematiques, Ecole Polytechnique F\'ed\'erale de Lausanne, Station 8, 1015 Lausanne, Switzerland}
\email{matthias.ruf@epfl.ch}

\author{Caterina Ida Zeppieri}
\address[Caterina Ida Zeppieri]{Angewandte Mathematik, Universit\"at M\"unster, Einsteinstra\ss e 62, 48149 M\"unster, Germany}
\email{caterina.zeppieri@uni-muenster.de}

\title[Stochastic homogenization of degenerate integral functionals with linear growth]{Stochastic homogenization of degenerate integral functionals with linear growth}

\begin{abstract}	
We study the limit behaviour of a sequence of non-convex, vectorial, random integral functionals, defined on $W^{1,1}$, whose integrands
satisfy degenerate linear growth conditions. These involve suitable random, 
scale-dependent weight-functions. Under minimal assumptions on the integrand and on the weight-functions, we show that the sequence of functionals homogenizes to a non-degenerate functional defined on $BV$. \end{abstract}
\maketitle
{\small
	\noindent\keywords{\textbf{Keywords:} Stochastic homogenization, $\Gamma$-convergence, vectorial integral functionals, degenerate linear growth, $BV$-functions.}
	
	\noindent\subjclass{\textbf{MSC 2020:} 49J45, 49J55, 60G10.}
}	


\section{Introduction}
The stochastic homogenization of non-degenerate integral functionals is by now well-understood. The first result in the nonlinear Sobolev setting dates back to \cite{DMMoI,DMMoII}, where, using the language of $\Gamma$-convergence, Dal Maso and Modica analyse the limit behaviour of sequences of random integral functionals depending on a small parameter $\e>0$ and satisfying standard growth conditions of order $p>1$. More precisely, for a given a complete probability space $(\Omega, \F, \mathbb P)$, Dal Maso and Modica consider functionals of the type
\begin{equation}\label{eq:F_eps}
	F_{\e}(\w)(u)=\int_A f(\w,\tfrac{x}{\e},\nabla u)\dx,
\end{equation}
where $A\subset \R^d$ is an open, bounded, Lipschitz set, $f \colon \Omega \times \R^d \times \R^d \to [0,+\infty)$ is measurable in $(\w,x)$, convex in the gradient-variable, and for every $(\omega,x,\xi)\in \Omega \times \R^d \times \R^d$ satisfies
\begin{equation}\label{eq:p-growth}
	\alpha |\xi|^p\leq f(\w,x,\xi)\leq \beta (|\xi|^p+1),
\end{equation}
for $p>1$ and $\alpha, \beta>0$. In \eqref{eq:F_eps} the functionals $F_\e$ depend on the random parameter $\omega\in \Omega$ through $f$, which is then to be interpreted as an \emph{ensemble} of integrands. This means that in this setting only the statistical specification of $f$ is known. 

For homogenization to take place $f$ needs to be \emph{stationary} or \emph{periodic in law}, which amounts to saying that the statistics of $f$ are translation invariant. The stationarity of $f$ can be quantified in terms of a measure-preserving group-action $\{\tau_z\}_{z\in \R^d}$ defined on $(\Omega, \F, \mathbb P)$ by requiring that  
\begin{equation}\label{intro:stationarity}
	f(\omega,x+z,\xi)= f(\tau_z \omega,x,\xi),
\end{equation}
for every $z\in \R^d$, and for every $(\omega,x,\xi)\in \Omega \times \R^d \times \R^d$. We notice that \emph{periodicity} is a particular instance of stationarity. Indeed choosing $\Omega=[0,1)^d$ and $\mathbb P = \mathcal L^d|_{[0,1)^d}$, we 
have that $\tau_z \omega=\omega+z$ (mod $1$) defines a  $\mathbb P$-preserving group-action on $\Omega$ (cf. Definition \ref{def:group-action}). 
Then any $(0,1)^d$-periodic function $g\colon \Omega \times \R^d\to [0,+\infty)$ corresponds to the stationary $f$ given by
\[
f(\w,x,\xi)=g(\w+x,\xi).
\]
It is easy to check that \emph{quasi-periodicity} is a special case of stationarity as well (see e.g. \cite{JKO}). 

Under the assumptions as above, Dal Maso and Modica \cite{DMMoII} prove that the random functionals $F_{\e}(\w)$ $\Gamma$-converge \emph{almost surely} to a random functional of the form
\begin{equation}\label{intro:F-hom-p}
	F_{\rm hom}(\w)(u)=\int_A f_{\rm hom}(\w,\nabla u)\dx,
\end{equation}
where $f_{\rm hom}$ satisfies \eqref{eq:p-growth} (with the same constants $\a,\beta$) and $\mathbb P$-a.s. and for every $\xi \in \R^d$ is given by
\begin{equation}\label{intro:f-hom-p}
	f_{\rm hom}(\omega,\xi)=\lim_{t\to +\infty}\frac{1}{t^d}\inf\left\{\int_{Q_t(0)}f(\w, x, \nabla u+\xi)\dx :\,u\in W^{1,p}_0(Q_t(0))\right\},
\end{equation}
where $Q_t(0)$ is the open cube of $\R^d$ centred at zero and with side-length $t$.
Moreover, under the additional assumption of \emph{ergodicity}, which loosely speaking means that the statistics of $f$ decorrelate over large distances, the integrand $f_{\rm hom}$ is actually deterministic and is given by the expected value of the random variable $f_{\hom}(\cdot, \xi)$.    

Although the formula defining $f_{\rm hom}$ is formally analogous to the asymptotic cell-formula of periodic homogenization (cf. \cite{BrDf}), the reason why the limit in \eqref{intro:f-hom-p} exists (almost surely) and defines a spatially homogeneous quantity is nontrivial, contrary to the deterministic periodic case.  In fact, as observed for the first time by Dal Maso and Modica in their seminal work \cite{DMMoII}, the well-posedness and $x$-homogeneity of \eqref{intro:f-hom-p} in this case is obtained by showing that, for fixed $\xi\in \R^d$, the map     
\[
(\omega, A) \mapsto \inf\left\{\int_{A}f(\w, x, \nabla u +\xi)\dx :\,u\in W^{1,p}_0(A)\right\},
\]
defines a so-called \emph{subadditive process} on $\Omega \times \A$ (cf. Definition \ref{def:sub-p}), where $\A$ denotes the class of open, bounded, Lipschitz subsets of $\R^d$, and then invoking the pointwise ergodic Theorem of Akcoglu and Krengel \cite{AkKr}. 
Once the existence of $f_{\rm hom}$ is established, the homogenization result for $F_\e(\omega)$ can be proven either appealing to the integral representation result in \cite{DMMoIII} or using a more modern approach based on the Fonseca and M\"uller blow-up method \cite{FoMue}.   

The Dal Maso and Modica proof-strategy is flexible enough to be adapted to the vectorial setting both in the case of superlinear ($p>1$)\cite{MeMi} and of linear ($p=1$) \cite{AML} standard growth conditions in the gradient variable.   
However, we notice here that when $f$ grows linearly in the gradient variable, the homogenized functional $F_{\rm hom}$ has a different structure with respect to the functional in \eqref{intro:F-hom-p} (see \cite{AML}). In fact, in the linear case sequences $(u_\e) \subset W^{1,1}(A)$ with equi-bounded energy $F_\e(\w)$ are precompact only in $BV(A)$, the \emph{space of functions of bounded variation} and therefore in this case homogenization and relaxation occur simultaneously. Then, the limit functional is of the form
\begin{equation}\label{intro:F-hom-1}
	F_{\rm hom}(\w)(u)=\int_A f_{\rm hom}(\w,\nabla u)\dx+\int_A f_{\rm hom}^{\infty}\left(\w,\frac{\mathrm{d}\D^su}{\mathrm{d}|\D^su|}\right)\,\mathrm{d}|\D^su|,
\end{equation}
where $f_{\rm hom}$ is given by \eqref{intro:f-hom-p} choosing $p=1$ and $f_{\rm hom}^{\infty}$ denotes the recession function of $f_{\rm hom}$, that is, the slope of $f_{\hom}$ at infinity. Moreover, we recall that $\nabla u \, {\rm d}x$ and ${\rm D}^s u$ represent, respectively, the absolutely continuous and the singular part of the (finite) measure ${\rm D}u$ with respect to the Lebesgue measure in $\R^d$. Loosely speaking, \eqref{intro:F-hom-1} shows that in the linear setting the homogenization can be performed before the relaxation takes place. We also refer to \cite{CDMSZ21} for analogous effects when the heterogeneous random functionals $F_{\e}(\omega)$ are defined on the space of special functions of bounded variation, $SBV$, instead of $W^{1,1}$. 

Recently the homogenization theory of random integral functionals of type \eqref{eq:F_eps} has been extended to the case of \emph{degenerate integrands}. By degenerate integrands we mean that $f$ satisfies growth conditions where the gradient variable is weighted by a non-homogeneous, random coefficient which is not necessarily bounded. This then leads to ``nonstandard'' upper and lower bounds of the form 
\begin{equation}\label{eq:deg-growth-p}
	\alpha |\xi \Lambda (\w,x)|^p\leq f(\w,x,\xi)\leq |\xi \Lambda(\w,x)|^p+\lambda(\w,x),
\end{equation}
where $\Lambda$ is a diagonal matrix-valued function, and both $\Lambda$ and $\lambda$ are stationary. 

The case $p>1$ has been firstly studied in \cite{NSS} where the authors establish a homogenization result for \emph{discrete} functionals 
of the form
\begin{equation*}
	F_{\e}(\w)(u)=\sum_{z\in\e\Z^d\cap A}\sum_{b\in\mathcal{E}_0}f_b\left(\w,\tfrac{z}{\e},\partial_b^{\e} u(z)\right),
\end{equation*}
where the variable $u$ takes values in $\R^m$, $\mathcal{E}_0$ is a finite set of interaction-edges, and $\partial_b^{\e}$ represents a discrete derivative along the scaled edge $(z,z+\e b)$. The stationary, ergodic interaction potentials $f_b$ are assumed to satisfy an estimate as in \eqref{eq:deg-growth-p}, but with a different scalar-valued weight-function $\Lambda_b$ for each edge $b\in\mathcal{E}_0$ (which corresponds to a diagonal matrix $\Lambda$ if one considers edge derivatives as partial derivatives) satisfying the moment condition
\begin{equation}\label{eq:moment-condition-p}
	\mathbb{E}[|\Lambda_b(\cdot,0)|^p]+\mathbb{E}[|\Lambda_b^{-1}(\cdot,0)|^{-p/(p-1)}]<+\infty.
\end{equation}
Moreover, in the scalar case $m=1$ an additional convexity assumption at infinity is imposed, while in the vectorial case $m>1$ the proof relies on the integrability assumption
\begin{align}\label{intro:Neu-cond}
	\mathbb{E}[|\Lambda_b(\cdot,0)|^{r p}]+\mathbb{E}[|\Lambda_b^{-1}(\cdot,0)|^{-sp}]&<+\infty\quad\text{ for some } r>1\text{ and $s$ such that }\frac{1}{r}+\frac{1}{s}\leq \frac{p}{d},
\end{align}
which is strictly stronger than \eqref{eq:moment-condition-p}.
Under these assumptions the $\Gamma$-limit of $F_\e(\w)$ is also of type \eqref{intro:F-hom-p} (and deterministic due to ergodicity) and the homogenized integrand satisfies standard $p$-growth conditions, similarly as in the non-degenerate case. 

In the more recent \cite{RR21} the $\Gamma$-convergence of $F_\e(\omega)$ as in \eqref{eq:F_eps}, with integrand satisfying \eqref{eq:deg-growth-p}, is proven under the sole integrability condition \eqref{eq:moment-condition-p}.  In this case the $\Gamma$-limit is a non-degenerate homogeneous functional of the form \eqref{intro:F-hom-p}, moreover condition \eqref{eq:moment-condition-p}  is shown to be the optimal one to obtain a non-degenerate limit integrand. To avoid the more restrictive condition in \eqref{intro:Neu-cond}, in \cite{RR21} the authors rely on a vectorial truncation-argument combined with an \emph{ad hoc} variant of the Birkhoff additive Ergodic Theorem (cf. Theorem \ref{thm.additiv_ergodic} in Section \ref{s.preliminaries}). In the proof of the vectorial truncation-result, it is of crucial importance that the matrix-valued coefficient $\Lambda$ has a diagonal structure. We observe that this choice still allows to cover the case of anisotropically degenerate integrands. 

Finally, in \cite{D'OnZe} the $\Gamma$-convergence of general integral functionals defined on scale-dependent \emph{weighted Sobolev Spaces} is analysed without requiring any stationarity of the integrands. In this case, under suitable uniform integrability assumptions on the (scalar) weight functions $\Lambda_\e$, which also need to belong to a Muckenhoupt class, the functionals are shown to $\Gamma$-converge (up to subsequences) to a degenerate integral functional defined on a ``limit'' weighted Sobolev space.

\medskip

In the present paper we consider random integral functionals $F_\e(\w)$ of type \eqref{eq:F_eps} with integrand $f$ satisfying degenerate \emph{linear} growth conditions of type
\begin{equation}\label{eq:deg-growth}
	c|\xi \Lambda(\w,x)|\leq f(\w,x,\xi)\leq |\xi \Lambda(\w,x)|+\lambda(\w,x),
\end{equation}
where the stationary functions $\Lambda$ and $\lambda$ satisfy the moment conditions 
\begin{equation}\label{intro:int-1}
	|\Lambda (\cdot,0)|,\lambda(\cdot,0) \in L^1(\Omega) \quad \text{and} \quad |\Lambda (\cdot,0)^{-1}|\in L^{\infty}(\Omega), 
\end{equation}
with $\Lambda$ being a diagonal matrix-valued function.

Besides joint measurability, the only regularity assumption we require for the realisations of the random integrand $f$ is the lower semicontinuity in the $\xi$-variable.
Then, under stationarity of $f$ and of the coefficient-functions $\Lambda, \lambda$ (cf. Assumption \ref{a.1}) in Theorem \ref{thm.Gamma_pure} we prove that, almost surely, the functionals $F_{\e}(\w)$ homogenize to a random functional $F_{\rm hom}(\omega)$ which is spatially homogeneous and deterministic if ergodicity is additionally assumed. Similarly as in \cite{AML}, the limit functional is finite on $BV(A,\R^m)$ where it is of the same form as \eqref{intro:F-hom-1} with $f_{\rm hom}$ given by
\begin{equation}\label{intro:f-hom-1}
	f_{\rm hom}(\omega,\xi)=\lim_{t\to +\infty}\frac{1}{t^d}\inf\left\{\int_{Q_t(0)}f(\w, x, \nabla u+\xi)\dx :\,u\in W^{1,1}_0(Q_t(0),\R^m)\right\}.
\end{equation}
Moreover, in the ergodic case there holds
\begin{equation*}
	f_{\rm hom}(\xi)=\mathbb E[f_{\rm hom}(\cdot,\xi)]= \lim_{t\to +\infty}\frac{1}{t^d}\mathbb E\bigg[\inf\bigg\{\int_{Q_t(0)}f(\cdot, x, \nabla u+\xi)\dx :\,u\in W^{1,1}_0(Q_t(0),\R^m)\bigg\}\bigg],
\end{equation*}
with $f_{\rm hom}$ satisfying the following standard linear growth conditions 
\begin{equation}\label{intro:gc-lim}
	\alpha\,c_0|\xi|\leq f_{\rm hom}(\xi)\leq C_0|\xi|+C_1,
\end{equation}
with constants 
\[
\text{$c_0:=\||\Lambda(\cdot,0)^{-1}|\|_{L^{\infty}(\Omega)}^{-1}$,\quad $C_0:=\sup_{\eta \in \R^{m\times d}, |\eta|=1}\mathbb{E}[|\eta \Am(\cdot,0)|]$,\quad and $C_1:=\mathbb{E}[\lambda(\cdot,0)]$.}
\]
It is worth noticing already here that the integrability conditions in \eqref{intro:int-1} are the optimal ones for \eqref{intro:gc-lim} to hold true. Namely, in Remark \ref{r.optimal} we show that the upper bound in \eqref{intro:gc-lim} implies the finite first moment condition $|\Lambda (\cdot,0)| \in L^1(\Omega)$, while the bound from below in \eqref{intro:gc-lim} is violated if $|\Lambda (\cdot,0)^{-1}|\notin L^{\infty}(\Omega)$. In fact, in this case there is a loss of $BV$-compactness which makes it possible to approximate interfaces at no cost. 

The proof of the stochastic homogenization result, Theorem \ref{thm.Gamma_pure}, follows the general strategy of Dal Maso and Modica \cite{DMMoII}. Namely, the asymptotic homogenization formula in \eqref{intro:f-hom-1} is established by applying a suitable variant of the  Akcoglu and Krengel pointwise ergodic Theorem (cf. \cite[Theorem 4.3]{LICHT-MICHAILLE}). Then, the almost sure $\Gamma$-convergence of the random functionals $F_{\e}(\w)$ is proven in two main steps. In the first step we show that the upper-bound inequality for the $\Gamma$-limit holds true. The proof of this inequality combines an explicit construction of a recovery sequence for $W^{1,1}$-target functions with a relaxation argument. Although the construction of the recovery sequence is rather classical, in this degenerate setting a new argument is needed to construct sequences $(u_\e)$ matching the right linear boundary conditions, in order to reconstruct the limit functional $F_{\rm hom}(\w)$ from $F_{\e}(\w)(u_\e)$. This argument is based on a standard vectorial-truncation result (cf. Lemma \ref{l.truncation}) and a new, \emph{ad hoc}, fundamental estimate for random degenerate integral functionals (see Lemma \ref{l.fundamental_estimate} and Remark \ref{rmk:sf}).  

The proof of the lower bound inequality for the $\Gamma$-limit is more delicate and is based on an adaptation to the $BV$-setting (cf. \cite{AmDM92}) of the Fonseca and M\"uller blow-up method \cite{FoMue}. In this case for any sequence $(u_\e)\subset W^{1,1}(A,\R^m)$ with equi-bounded energy and such that $u_\e \to u$ in $L^1(A,\R^m)$, the set function $\nu_\e(\omega, \cdot):=F_\e(\omega)(u_\e,\cdot)$ is interpreted as a (random) Radon measure on $\A$. By assumption, up to subsequences, $\nu_\e \to \nu$, for some limit (random) Radon measure $\nu$. Then, if we write 
\[
\nu(\omega,A)=\int_A \tilde f(\omega, x)\dx + \nu^s(\omega,A),       
\]
with $\nu^s \perp {\rm d}x$,  the idea of the blow-up method is to perform a local analysis to establish that, almost surely, the following two inequalities hold true
\begin{equation}\label{intro:1}
	\tilde f(\omega, x) \geq f_{\rm hom}(\omega, \nabla u(x))\quad \text{for a.e. $x\in A$}, 
\end{equation}
\begin{equation}\label{intro:2}
	\frac{{\rm d}\nu^s(\omega,x)}{{\rm d}\D u}\geq f^\infty_{\rm hom}\Big(\omega,\frac{{\rm d}\D u}{{\rm d} |\D u|}(x)\Big)\;\text{ for }|\D^s u|\text{-a.e. }x\in A.
\end{equation}
Eventually the lower-bound inequality for the $\Gamma$-limit follows by integrating \eqref{intro:1} and \eqref{intro:2}. 
\medskip

{\bf Structure of the paper.} This paper is organised as follows. In Section \ref{s.preliminaries} we introduce some notation, collect some useful facts on $BV$-functions, and recall some basic Ergodic Theory. In Section \ref{s.results} we state the main result of this paper, Theorem \ref{thm.Gamma_pure}, which establishes an almost sure homogenization result for the functionals $F_\e(\omega)$ under linear degenerate growth conditions. In this section we also discuss the optimality of our assumptions (see Remark \ref{r.optimal}). 
Eventually, in Section \ref{s.results}, a homogenization theorem for the functionals $F_\e(\omega)$ subject to Dirichlet boundary conditions is also stated (see Theorem \ref{thm:bc}). 

Then, Section \ref{sec:existence} is entirely devoted to the proof of the existence of the homogenized integrand $f_{\rm hom}$ and of its main properties (see Lemma \ref{l.existence_f_hom}). 

Section \ref{s.proofs} contains the proof of Theorem \ref{thm.Gamma_pure}. This proof is based on the preliminary technical results Lemma \ref{l.truncation} and Lemma \ref{l.fundamental_estimate} and then achieved in two main steps carried out in Proposition \ref{p.ub} and Proposition \ref{p.lb}. 

Eventually, in Section \ref{s.bc} the case of Dirichlet boundary conditions is considered and Theorem \ref{thm:bc} is proven, while in the short Appendix some measurability issues are addressed (see Lemma \ref{l.oninf}).

\section{Notation and preliminaries}\label{s.preliminaries}

\subsection{General notation}
Throughout the paper $d,m \in \N$ are fixed with $d,m\geq 2$. Given a measurable set $A\subset\R^d$, $|A|$ denotes its $d$-dimensional Lebesgue measure. The Euclidean norm of $x\in \R^d$ is denoted by $|x|$ and $B_{\rho}(x_0):=\{x\in \R^d \colon |x-x_0|<\rho\}$ denotes the open ball with radius $\rho>0$ centred at $x_0$.
Given $x_0\in\R^d$ and $\rho>0$ we set $Q_{\rho}(x_0):=x_0+(-\rho/2,\rho/2)^d$.  

For $\xi\in \R^{m\times d}$ fixed, $\ell_{\xi}$ denotes the linear function with gradient $\xi$, that is $\ell_{\xi}(x):=\xi x$.  

We define $\mathbb{D}_d$ to be the set of diagonal matrices in $\R^{d\times d}$. For a measurable set $A$ with positive measure, we define $\dashint_A:=\frac{1}{|A|}\int_A$. We use standard notation for $L^p$-spaces and Sobolev spaces $W^{1,p}$. The Borel $\sigma$-algebra on $\R^d$ is denoted by $\mathcal{B}^d$, while we use $\mathcal{L}^d$ for the $\sigma$-algebra of Lebesgue-measurable sets.

Throughout the paper, the parameter $\e>0$ varies in a strictly decreasing sequence of positive real numbers converging to zero. 

The letter $C$ stands for a generic positive constant which may vary from line to line, within the same expression.

\subsection{BV-functions}
In this section we recall some basic facts and notation concerning the space of functions of bounded variation. For a systematic treatment of this subject we refer the reader to the monograph \cite{AmbFusPal}.

Let $A\subset\R^d$ be an open set. A function $u\in L^1(A,\R^m)$ is a function of bounded variation if its distributional derivative $\D u$ is a finite matrix-valued Radon measure on $A$; in this case we write $u\in BV(A,\R^m)$.	
The space $BV(A,\R^m)$ is a Banach space when endowed with the norm $\|u\|_{BV(A,\R^m)}:=\|u\|_{L^1(A,\R^m)}+|\D u|(A)$, where $|\D u|$ denotes the total variation measure of $\D u$. If $A$ is a bounded Lipschitz domain, then $BV(A,\R^m)$ is compactly embedded in $L^q(A,\R^m)$ for $q<1^*:=d/(d-1)$. We say that a sequence $(u_k)$ converges weakly$^*$ in $BV(A,\R^m)$ to $u$ if $u_k\to u$ in $L^1(A,\R^m)$ and $\D u_n\overset{\star}{\rightharpoonup}\D u$ in the sense of measures.

If $u \in BV(A,\R^m)$ the structure of $\D u$ can be characterised. To this end, we need to recall some further concepts and notation.  
A function $u\in L^1(A,\R^m)$ has an approximate limit at $x\in A$ whenever there exists $z\in\mathbb{R}^d$ such that
\begin{equation*}
	\lim_{\rho\to 0^+}\frac{1}{\rho^d}\int_{B_{\rho}(x)}|u(y)-z|\,\mathrm{d}y=0\,.
\end{equation*}
Next we introduce the so-called approximate jump points of $u$. Given $x\in A$ and $n\in \mathbb S^{d-1}$ we set
\begin{equation*}
	B^{\pm}_{\rho}(x,n):=\{y\in B_{\rho}(x):\;\pm (y-x) \cdot n >0\}\,.
\end{equation*}
We say that $x\in A$ is an approximate jump-point of $u$ if there exist $a\neq b\in\mathbb{R}^m$ and $n\in \mathbb S^{d-1}$ such that
\begin{equation*}
	\lim_{\rho\to 0^+}\frac{1}{\rho^d}\int_{B_{\rho}^+(x,n)}|u(y)-a|\,\mathrm{d}y=\lim_{\rho\to 0^+}\frac{1}{\rho^d}\int_{B^-_{\rho}(x,n)}|u(y)-b|\,\mathrm{d}y=0 \, .
\end{equation*}
The triplet $(a,b,n)$ is unique up to the change to $(b,a,-n)$ and is denoted by $(u^+(x),u^-(x),n_u(x))$, moreover we let $J_u$ be the set of approximate jump-points of $u$. The triplet $(u^+,u^-,n_u)$ can be chosen as a Borel function on the Borel set $J_u$. Then, denoting by $\nabla u$ the approximate gradient of $u$, we can decompose the measure $\D u$ as 
\begin{equation*}
	\D u(B)=\int_B\nabla u\,\mathrm{d}x+\int_{J_u\cap B}(u^+ -u^-)\otimes n_u \,\mathrm{d}\mathcal{H}^{d-1}+\D^{(c)}u(B) \, ,
\end{equation*}
for every $B\in \mathcal B^d$, where $\D^{(c)}u$ is the Cantor part of $\D u$ and $\D^{(j)}u := (u^+ - u^- )\otimes n_u \mathcal{H}^{d-1} \LL J_u$ is its jump-part, with $\mathcal H^{d-1}$ being the $(d-1)$-dimensional Hausdorff measure. The total variation $|\D u|$ can then be decomposed as
\begin{equation*}
|\D u|(B)=\int_B |\nabla u|\dx+\int_{J_u\cap B}|u^+-u^-|\,\mathrm{d}\mathcal{H}^{d-1}+\int_B \frac{\mathrm{d} \D^{(c)} u}{\mathrm{d} |D^{(c)}u|}	\,\mathrm{d} |D^{(c)}u|.
\end{equation*}
In this paper we mostly use the simpler decomposition $\D u(B)=\int_B\nabla u(x)\dx+\D^s u(B)$, where $\D^s u$ denotes the part of the measure $\D u$ which is singular with respect to the Lebesgue measure, thus $\D^s u=\D^{(j)}u+\D^{(c)}u$.

\subsection{Ergodic theory}
Let $(\Omega,\mathcal{F},\mathbb{P})$ be a complete probability space. Below we recall some basic definitions and some useful results from ergodic theory.
\begin{definition}[Measure-preserving group-action]\label{def:group-action} A measure-preserving group-action on $(\Omega,\F,\P)$ is a family $\tau:=\{\tau_z\}_{z\in\R^d}$ of measurable mappings $\tau_z:\Omega\to\Omega$ satisfying the following properties:
\begin{enumerate}[label=(\arabic*)]
	\item\label{joint} (joint measurability) the map $(\w,z)\mapsto\tau_z(\w)$ is $(\mathcal{F}\otimes\mathcal{L}^d, \mathcal{F})$-measurable for every $z\in \R^d$;
	\item\label{inv} (invariance) $\P(\tau_z E)=\P(E)$, for every $E\in\F$ and every $z\in\R^d$;
	\item\label{group} (group property) $\tau_0=\rm id_\Omega$ and $\tau_{z_1+z_2}=\tau_{z_2}\circ\tau_{z_1}$ for every $z_1,z_2\in\R^d$.
\end{enumerate}
If, in addition, $\{\tau_z\}_{z\in\R^d}$ satisfies the implication
\begin{equation*}
	\mathbb{P}(\tau_z E\Delta E)=0\quad\forall\, z\in\R^d\implies \mathbb{P}(E)\in\{0,1\},
\end{equation*}
then $\tau$ it is called ergodic.
\end{definition}
Throughout the paper we frequently use a variant of the Birkhoff Ergodic Theorem which is useful for our purposes. Before stating it, we need to fix some additional notation. 

Let $g$ be a measurable function on $(\Omega,\mathcal{F},\mathbb P)$; $\mathbb{E}[g]$ denotes the expected value of $g$, that is
\begin{align*}
	\mathbb{E}[g]:=\int_\Omega g(\omega){\rm d}\mathbb P.
\end{align*}
For every $g\in L^1(\Omega)$ and for every $\sigma$-algebra $\mathcal{F}'\subset\mathcal{F}$, we denote with $\mathbb{E}[g|\mathcal{F}']$ the conditional expectation of $g$ with respect to $\mathcal{F}'$.
We recall that $\mathbb{E}[g|\mathcal{F}']$ is the unique $L^1(\Omega)$-function satisfying 
\begin{align*}
	\int_E\mathbb{E}[g|\mathcal{F}'](\omega){\rm d}\mathbb P=\int_E g(\omega){\rm d}\mathbb P
\end{align*}
for every $E\in\mathcal{F}'$.

\smallskip

We recall the following version of the Additive Ergodic Theorem which can be found in \cite[Lemma 4.1]{RR21}.

\begin{theorem}[Additive ergodic theorem]\label{thm.additiv_ergodic}
Let $g\in L^1(\Omega)$, let $\tau$ be a measure-preserving group-action on $(\Omega,\mathcal{F},P)$, and let $\mathcal{F}_\tau$ denote the $\sigma$-algebra of $\tau$-invariant sets. Then there exists a set $\Omega'\in\mathcal{F}$ with $P(\Omega')=1$ such that for every $\omega\in\Omega'$ and for every measurable bounded set $B\subset\mathbb{R}^d$ with $|B|>0$ there holds 
\begin{equation}\label{Birkhoff}
\lim_{t\to +\infty}\dashint_{tB}g(\tau_z\w)\,{\rm d}z=\mathbb{E}[g|\mathcal{F}_\tau](\omega).
\end{equation} 
If moreover $\tau$ is ergodic, then  $\mathcal{F}_\tau$ reduces to the trivial $\sigma$-algebra, therefore \eqref{Birkhoff} becomes
\begin{equation*}
\lim_{t\to +\infty}\dashint_{tB}g(\tau_{z}\omega)\,{\rm d}z=\mathbb{E}[g].
\end{equation*} 
\end{theorem}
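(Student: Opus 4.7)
The plan is to reduce the statement to a classical multiparameter ergodic theorem over cubes and then upgrade the class of averaging sets via an approximation argument, ensuring the exceptional null set does not depend on $B$.

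First, by replacing $g$ with $g - \mathbb{E}[g|\mathcal{F}_\tau]$ (the latter being a $\tau$-invariant $L^1$ function), I may assume without loss of generality that $\mathbb{E}[g|\mathcal{F}_\tau] = 0$, so the goal reduces to showing that the averages over $tB$ vanish as $t \to +\infty$. The next step is to establish the limit for the countable family $\mathcal{Q}$ of cubes $Q_r(x_0)$ with $r \in \mathbb{Q}_{>0}$ and $x_0 \in \mathbb{Q}^d$. For each such cube, one discretizes the $\R^d$-action $\tau$ by restricting to $\mathbb Z^d$ shifts and applies the classical multidimensional Birkhoff ergodic theorem to the induced $\mathbb{Z}^d$-action, followed by a sandwich argument that recovers the continuous limit $t \to +\infty$ from integer scales. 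Taking the intersection over $\mathcal{Q}$ and over a countable sequence $M_k \to +\infty$ of truncation levels (see below), one obtains a single full-measure set $\Omega_1 \in \mathcal{F}$ on which all these convergences hold.

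Second, to extend the convergence from $\mathcal{Q}$ to an arbitrary bounded measurable $B$ with $|B|>0$, I split $g = g_M + (g - g_M)$ with $g_M := g\,\mathbf{1}_{\{|g|\le M\}}$ and treat the two parts separately. For the bounded part $g_M$, given $\delta > 0$ I approximate $B$ by a finite disjoint union $U$ of cubes from $\mathcal{Q}$ with $|B\triangle U| < \delta |B|$, write
\begin{equation*}
\dashint_{tB}g_M(\tau_z\w)\,\mathrm{d}z = \frac{|U|}{|B|}\dashint_{tU}g_M(\tau_z\w)\,\mathrm{d}z + R_t(\w),
\end{equation*}
and bound $|R_t(\w)|\le M\,|B\triangle U|/|B|\le M\delta$ uniformly in $t$; applying the rational-cube convergence along $\Omega_1$ and then letting $\delta\to 0$ shows that $\dashint_{tB}g_M(\tau_z\w)\,\mathrm{d}z$ converges to $\mathbb{E}[g_M|\mathcal{F}_\tau](\w)$. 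For the tail part $g - g_M$, the same approximation scheme together with the cube-version of the ergodic theorem applied to the non-negative function $|g-g_M|$ gives $\limsup_{t} \dashint_{tB}|g-g_M|(\tau_z\w)\,\mathrm{d}z \le C_B\,\mathbb{E}[|g-g_M|\,|\,\mathcal{F}_\tau](\w)$; by conditional dominated convergence, the right-hand side tends to $0$ along a subsequence $M_k\to+\infty$ for every $\w$ in a further full-measure set $\Omega_2$. Setting $\Omega' := \Omega_1 \cap \Omega_2$ and combining the two estimates via a standard $\varepsilon$-argument yields the desired conclusion.

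The main obstacle is the last step: ensuring that a single exceptional null set works simultaneously for \emph{every} bounded measurable $B$. This is subtle because, while for cubes a countable family suffices, arbitrary measurable sets form an uncountable class. The resolution is precisely the decomposition into $g_M$ (where the approximation error is deterministic and tends to $0$ with $\delta$) plus the tail (where the bound depends only on $\omega$ through the conditional expectation $\mathbb{E}[|g-g_{M_k}|\,|\,\mathcal{F}_\tau]$, a quantity independent of $B$). The ergodic statement in the second part of the theorem then follows because $\mathcal{F}_\tau$ reduces to the trivial $\sigma$-algebra modulo $\P$, so $\mathbb{E}[g|\mathcal{F}_\tau] = \mathbb{E}[g]$ almost surely.
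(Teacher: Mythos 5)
The paper itself offers no proof of this theorem: it is quoted verbatim from \cite[Lemma 4.1]{RR21}, so there is no internal argument to compare against. Your proposal is, in substance, the standard proof of that lemma, and its key mechanism is correct: the uniformity of the exceptional null set over \emph{all} bounded measurable $B$ is exactly what the truncation $g=g_M+(g-g_M)$ buys, since for the bounded part the error of replacing $B$ by a finite disjoint union $U$ of rational cubes is deterministic ($|R_t|\leq M|B\triangle U|/|B|\leq M\delta$ uniformly in $t$), while the tail is dominated by the average of $|g-g_{M_k}|$ over a single enclosing rational cube $Q\supset B$, whose limit $\tfrac{|Q|}{|B|}\,\mathbb{E}\big[|g-g_{M_k}|\,\big|\,\mathcal{F}_\tau\big](\omega)$ is controlled independently of $B$ along a countable family of truncation levels. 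The passages $\delta\to0$ and $k\to+\infty$ then close the argument as you describe.

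One step is too quick and should be filled in. After discretizing to the $\Z^d$-action and applying the multiparameter Birkhoff theorem to the cell average $G(\omega):=\int_{[0,1)^d}g(\tau_z\omega)\,\mathrm{d}z$, the limit you obtain is $\mathbb{E}[G\,|\,\mathcal{I}]$, where $\mathcal{I}$ is the $\sigma$-algebra of $\Z^d$-invariant sets; in general $\mathcal{I}$ is strictly larger than $\mathcal{F}_\tau$ (for the periodic action on the torus, $\mathcal{I}=\mathcal{F}$ while $\mathcal{F}_\tau$ is trivial), so the identity $\mathbb{E}[G\,|\,\mathcal{I}]=\mathbb{E}[g\,|\,\mathcal{F}_\tau]$ is not automatic. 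It is true, but requires showing that the limit function is a.e.\ invariant under every real translation (the boundary-layer term $n^{-d}\int_{(y+[0,n)^d)\triangle[0,n)^d}|g(\tau_z\omega)|\,\mathrm{d}z$ vanishes by applying the cube statement to $|g|$ over two nested families of cubes), hence agrees a.e.\ with an $\mathcal{F}_\tau$-measurable function, and that it integrates like $g$ against every $E\in\mathcal{F}_\tau$ by Fubini and the invariance of $E$. A related point is that the cubes $tQ_\rho(x_0)$ with $x_0\neq0$ have centers moving with $t$, so the sandwich with lattice cubes needs the $\Z^d$ ergodic theorem along regular (Tempelman) families rather than only over $[0,n)^d$. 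Both repairs are standard; with them, your argument is a complete proof.
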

For later purposes we also need to recall the definition of subadditive process. 

In all that follows $\mathcal A$ denotes the collection of all open and bounded subsets of $\R^d$ with Lipschitz boundary. 
\begin{definition}[Subadditive process]\label{def:sub-p}
	Let $\tau$ be a measure-preserving group-action on $(\Omega,\mathcal{F},P)$. A subadditive process is a function $\mu:\Omega\times\mathcal{A}\rightarrow [0,+\infty)$ satisfying the following properties:
	\begin{enumerate}
		\item for every $A\in\mathcal{A}$, $\mu(\cdot,A)$ is $\mathcal{F}$-measurable;
		\item for every $\omega\in\Omega$, $A\in\mathcal{A}$, and $z\in\mathbb{R}^d$
		\begin{align*}
			\mu(\omega,A+z)=\mu(\tau_z\omega,A);
		\end{align*}
		\item for every $\omega\in\Omega$, for every $A\in\mathcal{A}$, and for every \emph{finite} family $(A_i)_{i\in I}\subset\mathcal{A}$ of pairwise disjoint sets such that $A_i\subset A$ for every $i\in I$ and $|A\setminus\cup_{i\in I}A_i|=0$, there holds
		\begin{align*}
		\mu(\omega,A)\leq\sum_{i\in I}\mu(\omega,A_i);
		\end{align*}
	    	    \item there exists a constant $c>0$ such that for every $A\in\mathcal{A}$
	    \begin{align*}
	    	0\leq\mathbb E\big[\mu(\cdot,A)\big]\leq c|A|.
	    \end{align*}
    \end{enumerate}
Moreover,  if $\tau$ is ergodic then $\mu$ is called a subadditive ergodic process.
\end{definition}

We now state a version of the Subadditive Theorem, originally proven by Akcoglu and Krengel \cite{AkKr}, which is suitable for our purposes (see \cite[Theorem 4.3]{LICHT-MICHAILLE}).

\begin{theorem}\label{thm:ergodic}
	Let $\mu:\Omega\times\mathcal{A}\rightarrow[0,+\infty)$ be a subadditive process. Then there exist a $\mathcal{F}$-measurable function $\phi:\Omega\rightarrow[0,+\infty)$ and a set $\Omega'\in\mathcal{F}$ with $\mathbb P(\Omega')=1$ such that
	\begin{align*}
		\lim_{t\to+\infty}\frac{\mu(\omega,tQ)}{|tQ|}=\phi(\omega),
	\end{align*}
for every $\omega\in\Omega'$ and for every cube $Q:=Q_{\rho}(x_0)$ in $\mathbb{R}^d$. 
\end{theorem}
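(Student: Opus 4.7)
The plan is to reduce the statement to the classical Akcoglu--Krengel subadditive ergodic theorem for $\Z^d$-indexed processes, and then extend the resulting convergence from the reference sequence of integer cubes to arbitrary cubes $Q_\rho(x_0)$ via a sandwich argument based on properties (2)--(4) together with the additive Birkhoff Theorem \ref{thm.additiv_ergodic}.

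First, by stationarity (property (2)) we have $\mu(\omega,tQ_\rho(x_0))=\mu(\tau_{tx_0}\omega,tQ_\rho(0))$, so it suffices to treat cubes centred at the origin. I would then introduce the integer-indexed sequence $\sigma_n(\omega):=\mu(\omega,nQ_1(0))$. Property (3) applied to axis-aligned decompositions of $nQ_1(0)$ into smaller cubes yields the required combinatorial subadditivity in the $\Z^d$-sense, while property (4) gives the growth bound $\E[\sigma_n]\leq cn^d$. Invoking the classical $\Z^d$-version of the Akcoglu--Krengel theorem (whose proof rests on a Kingman-type maximal inequality) produces an $\F_\tau$-measurable $\phi:\Omega\to[0,+\infty)$ with $\E[\phi]\leq c$ and a full-measure set $\Omega_1\in\F$ on which $\sigma_n(\omega)/n^d\to\phi(\omega)$.

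Second, I would extend the convergence to continuous scales $t\to+\infty$ and to arbitrary side-length $\rho>0$. For the upper bound, tile $tQ_\rho(0)$ by $\lfloor t\rho/n\rfloor^d$ disjoint translates of $nQ_1(0)$ (for a large fixed $n\in\N$) plus a thin boundary corridor that can be further covered by unit cubes. Subadditivity, stationarity, and Theorem \ref{thm.additiv_ergodic} applied to $\omega\mapsto\sigma_n(\omega)$ yield, almost surely,
\begin{equation*}
\limsup_{t\to+\infty}\frac{\mu(\omega,tQ_\rho(0))}{(t\rho)^d}\leq\frac{\E[\sigma_n\,|\,\F_\tau](\omega)}{n^d},
\end{equation*}
and sending $n\to+\infty$ together with $\sigma_n/n^d\to\phi$ gives $\limsup_t\mu(\omega,tQ_\rho(0))/(t\rho)^d\leq\phi(\omega)$. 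For the matching lower bound, a dual covering of $NQ_1(0)$ by disjoint translates of $tQ_\rho(0)$ produces, via subadditivity, $\sigma_N(\omega)\leq\sum_k\mu(\tau_{z_k}\omega,tQ_\rho(0))+\text{(boundary)}$, and dividing by $N^d$ and passing $N\to+\infty$ with Birkhoff applied to $\omega\mapsto\mu(\omega,tQ_\rho(0))$ yields
\begin{equation*}
\phi(\omega)\leq\frac{\E[\mu(\cdot,tQ_\rho(0))\,|\,\F_\tau](\omega)}{(t\rho)^d}.
\end{equation*}
Combined with the upper bound and Fatou's lemma for conditional expectations along a sequence in $t$ realising the $\liminf$, this forces the liminf to agree with $\phi(\omega)$, so that the full limit exists, equals $\phi(\omega)$ on a full-measure set, and is independent of $\rho$ (and of $x_0$ by Step 1).

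The main obstacle is the lower bound in the previous step: subadditivity propagates only in one direction, so a pointwise lower estimate on $\mu(\omega,tQ_\rho(0))$ is not directly available from a finer decomposition. The detour through the integrated inequality produced by covering $NQ_1(0)$ and exploiting the already-established convergence of $\sigma_N/N^d$ is what transfers the discrete limit into a bound on the continuous averages. A final subtlety is that the exceptional null set must be chosen independently of $(x_0,\rho)$; this is handled by first establishing convergence on a countable dense set of parameters in $\Q^d\times\Q_+$ and then extending to all cubes by monotonicity of the above covering estimates in the geometric parameters.
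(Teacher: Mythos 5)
The paper does not prove this theorem: it is quoted from the literature (Akcoglu--Krengel \cite{AkKr}, in the form of \cite[Theorem 4.3]{LICHT-MICHAILLE}), so there is no internal proof to compare against. Your architecture --- the discrete Akcoglu--Krengel theorem for $\sigma_n(\omega)=\mu(\omega,nQ_1(0))$, followed by an extension to continuous scales and arbitrary cubes via tiling and Theorem \ref{thm.additiv_ergodic} --- is indeed the standard route. Two points, however, do not survive scrutiny. A minor one first: the identity $\mu(\omega,tQ_\rho(x_0))=\mu(\tau_{tx_0}\omega,tQ_\rho(0))$ does not reduce the problem to cubes centred at the origin, because the group element $tx_0$ varies with $t$; knowing the limit of $t\mapsto\mu(\omega',tQ_\rho(0))$ for a.e.\ \emph{fixed} $\omega'$ says nothing about $t\mapsto\mu(\tau_{tx_0}\omega,tQ_\rho(0))$. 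This is harmless only because the tiling argument can be run directly on $Q_{t\rho}(tx_0)$, but as written the reduction is vacuous. Relatedly, an abstract subadditive process is \emph{not} monotone in the set variable, so ``covering'' boundary corridors by unit cubes does not bound their $\mu$-value; you must genuinely partition them into translates of finitely many fixed shapes and control the resulting sums almost surely by an ergodic theorem.

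The serious gap is the lower bound. Your dual covering delivers only the integrated inequality $\phi(\omega)\leq(t\rho)^{-d}\,\E[\mu(\cdot,tQ_\rho(0))\,|\,\F_\tau](\omega)$, and you propose to combine it with the pointwise upper bound via ``Fatou along a sequence realising the liminf''. This cannot work: conditional Fatou gives $\E[\liminf_j g_{t_j}\,|\,\F_\tau]\leq\liminf_j\E[g_{t_j}\,|\,\F_\tau]$, which points the wrong way for your purposes, and a sequence realising the liminf is $\omega$-dependent, so no fixed sequence is available. More fundamentally, the two facts you would have in hand --- $\limsup_t g_t\leq\phi$ a.e.\ and $\E[g_t\,|\,\F_\tau]\geq\phi$ for every $t$ --- are jointly consistent with $\liminf_t g_t<\phi$ on a set of full measure: take $g_t=\phi-h_t+\E[h_t\,|\,\F_\tau]$ with $(h_t)$ a nonnegative ``typewriter'' family converging to $0$ in $L^1$ but with $\limsup_t h_t=1$ everywhere. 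At best, uniform integrability would upgrade your information to $g_t\to\phi$ in $L^1$, hence a.e.\ convergence only along subsequences, which is strictly weaker than the statement. The pointwise lower bound is precisely the hard half of the Akcoglu--Krengel theorem and requires a maximal inequality or Vitali-type covering argument (or, for the continuous-scale extension, a pointwise sandwich $Q_s(0)\subset Q_{t\rho}(0)\subset Q_{s+1}(0)$ with $s=\lfloor t\rho\rfloor$ together with almost-sure control of the boundary layer); it cannot be recovered from integrated inequalities.
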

\section{Setting of the problem and statement of the main result}\label{s.results}

In this section we introduce the family of random functionals we are going to study and state the main result of the paper.  

Below we define the class of admissible random integrands we consider throughout. 

\begin{assumption}[Admissible integrands]\label{a.1} 
The function $f:\Omega\times\R^d\times\R^{m\times d}\to [0,+\infty)$ is $(\mathcal{F}\otimes\mathcal{L}^d\otimes\mathcal{B}^{m\times d})$-measurable and satisfies the following assumptions: 
\begin{itemize}
	\item[(A1)](lower semicontinuity) For every $\w\in\Omega$ and every $x\in\R^d$ the function $\xi\mapsto f(\w,x,\xi)$ is lower semicontinuous.

\smallskip

	\item[(A2)](degenerate growth conditions)
	There exist $\alpha>0$ and $(\mathcal F \otimes \mathcal L^d)$-measurable functions $\Am:\Omega\times\R^d\to \mathbb{D}_{d}$, $\lambda:\Omega\times\R^d\to [0,+\infty)$ with $|\Am(\cdot,0)|,\lambda(\cdot,0)\in L^1(\Omega)$ and $|\Am(\cdot,0)^{-1}|\in L^{\infty}(\Omega)$, such that
	 for every $\w\in\Omega,x\in\R^d$, and every $\xi\in \R^{m\times d}$ there holds
	\begin{equation}\label{gc}
		\alpha \,|\xi \Am(\w,x)|\leq f(\w,x,\xi)\leq |\xi \Am(\w,x)|+\lambda(\w,x).
	\end{equation}

\smallskip

	\item[(A3)] (ergodicity) There exists a measure-preserving, ergodic group-action $\tau=\{\tau_{z}\}_{z\in\R^d}$ such that 
	\begin{equation*}
		\begin{split}
			f(\tau_z\w,x,\xi)&=f(\w,x+z,\xi),
			\\
			\Am(\tau_z\w,x)&=\Am(\w,x+z), 
			\\
			\lambda(\tau_z\w,x)&=\lambda(\w,x+z),
		\end{split}
	\end{equation*}
for every $z\in  \R^d$ and every $(\w,x,\xi)\in\Omega\times\R^d\times\R^{m\times d}$.
\end{itemize}
\end{assumption}

For $\e>0$ and $\omega \in \Omega$, we consider the integral functionals $F_{\e}(\omega) \colon L^1_{\rm loc}(\R^d,\R^m) \times \mathcal A \longrightarrow [0,+\infty]$ defined as
\begin{equation}\label{F-e}
F_{\e}(\w)(u,A):=
\begin{cases}
	\displaystyle \int_A f(\w,\tfrac{x}{\e},\nabla u)\dx &\mbox{if $u\in W^{1,1}(A,\R^m)$},
	\cr
	+\infty &\mbox{otherwise,}
\end{cases}
\end{equation}
with $f$ satisfying Assumption \ref{a.1}.

The following theorem establishes a homogenization result for the random functionals $F_\e$ and is the main result of this paper. 

\begin{theorem}[Stochastic homogenization]\label{thm.Gamma_pure}
Let $f$ satisfy Assumption \ref{a.1}; for every $\e>0$ and every $\omega \in \Omega$ let $F_\e(\omega)$ be as in \eqref{F-e}. 
Then, there exists $\widetilde \Omega\in \mathcal F$ with $\P(\widetilde\Omega)=1$ such that:
\begin{enumerate}
\item[i.] (Existence of the homogenized integrand) For every $\w\in \widetilde\Omega$, $x_0\in \R^d$, $\rho >0$, and every $\xi \in \R^{m\times d}$ the following limit exists, is spatially homogeneous, and deterministic
\begin{equation}\label{lim-f-hom}
\lim_{t\to +\infty}\frac{1}{|tQ_\rho(x_0)|}\inf\left\{\int_{tQ_\rho(x_0)}f(\w, x, \nabla u)\dx :\,u\in \ell_\xi +W^{1,1}_0(tQ_\rho(x_0),\R^m)\right\}.
\end{equation}
\item[ii.] (Properties of the homogenized integrand) For every $\xi \in \R^{m \times d}$ set
\begin{align}\label{f-hom}
f_{\rm hom}(\xi):=  \lim_{t\to +\infty}\frac{1}{t^d}\mathbb E \left[\inf\left\{\int_{Q_t(0)}f(\cdot, x, \nabla u)\dx :\,u\in \ell_\xi +W^{1,1}_0(Q_t(0),\R^m)\right\}\right];
\end{align}
then $f_{\rm hom}$ is continuous, quasiconvex, and for every $\xi \in \R^{m\times d}$ it satisfies the following standard linear growth conditions
\begin{equation*}
	\alpha\,c_0|\xi|\leq f_{\rm hom}(\xi)\leq C_0|\xi|+C_1,
\end{equation*}
with 
\begin{equation}\label{lim-const}
\text{$c_0:=\||\Lambda(\cdot,0)^{-1}|\|_{L^{\infty}(\Omega)}^{-1}$,\quad $C_0:=\sup_{\eta \in \R^{m\times d}, |\eta|=1}\mathbb{E}[|\eta \Am(\cdot,0)|]$,\quad and $C_1:=\mathbb{E}[\lambda(\cdot,0)]$.}
\end{equation} 
\item[iii.] (Almost sure $\Gamma$-convergence) For every $\w\in \widetilde \Omega$ and every $A\in \A$ the functionals 
$F_{\e}(\w)(\cdot,A)$ $\Gamma$-converge in $L^1_{\rm loc}(\R^d,\R^m)$ to $F_{\rm hom}(\cdot, A)$ with $F_{\rm hom} \colon L^1_{\rm loc}(\R^d;\R^m)\times \mathcal A \longrightarrow [0,+\infty]$ given by
\begin{equation}\label{F-hom}
F_{\rm hom}(u)= \begin{cases}
\displaystyle \int_A f_{\rm hom}(\nabla u)\dx+\int_A f_{\rm hom}^{\infty}\left(\frac{\mathrm{d}D^su}{\mathrm{d}|D^su|}\right)\,\mathrm{d}|D^su| & \text{if $u\in BV(A,\R^m)$},
\cr
+\infty & \text{otherwise,} 
\end{cases}
\end{equation}
where $f_{\rm hom}^{\infty}$ denotes the recession function of $f_{\rm hom}$\ie for every $\xi \in \R^{m \times d}$
\begin{equation*}
f_{\rm hom}^{\infty}(\xi):=\limsup_{t\to +\infty}\frac{1}{t}f_{\rm hom}(t\xi).
\end{equation*}
\end{enumerate}
\end{theorem}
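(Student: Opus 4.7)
The plan is to follow the classical Dal Maso--Modica strategy, adapted to the degenerate linear growth setting and to $BV$ targets. For (i)--(ii), fix $\xi\in\R^{m\times d}$ and consider
\begin{equation*}
\mu_\xi(\omega,A):=\inf\Bigl\{\int_A f(\omega,x,\nabla u)\,\mathrm{d}x\,:\,u\in\ell_\xi+W^{1,1}_0(A,\R^m)\Bigr\}.
\end{equation*}
Using $\ell_\xi$ as test function together with the upper bound in (A2) and stationarity yields $\mathbb E[\mu_\xi(\cdot,A)]\leq(|\xi|\,\mathbb E[|\Lambda(\cdot,0)|]+\mathbb E[\lambda(\cdot,0)])|A|$, while the covariance $\mu_\xi(\omega,A+z)=\mu_\xi(\tau_z\omega,A)$ follows from (A3) via the change of variables $v(y):=u(y+z)-\xi z$. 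Subadditivity over finite Lipschitz partitions of $A$ is obtained by pasting near-optimal competitors, which all agree with $\ell_\xi$ on the interfaces. After the measurability check (cf.\ Lemma \ref{l.oninf}), $\mu_\xi$ is a subadditive ergodic process, so Theorem \ref{thm:ergodic} delivers the limit \eqref{lim-f-hom} on a full-measure set $\widetilde\Omega_\xi$, independent of the cube and deterministic by ergodicity. To obtain a single $\widetilde\Omega$ valid for all $\xi$, I would exploit that $\xi\mapsto\mu_\xi(\omega,A)/|A|$ is almost surely locally Lipschitz in $\xi$ (triangle inequality combined with the upper bound) and pass to a countable dense subset. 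The properties in (ii) then follow: the upper bound on $f_{\rm hom}$ is read off from the competitor $\ell_\xi$ combined with Birkhoff (Theorem \ref{thm.additiv_ergodic}); the lower bound uses $|\xi|\leq c_0^{-1}|\xi\Lambda|$ pointwise and Jensen; quasiconvexity is inherited from the cell formula by periodically extending an admissible competitor.

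For (iii) I would prove the $\Gamma$-$\limsup$ and $\Gamma$-$\liminf$ separately. \textbf{Upper bound.} Start with a linear target $u=\ell_\xi$ on a cube by taking $\e$-periodic rescalings of near-optimal competitors for $\mu_\xi(\omega,tQ)$ with $t\to\infty$; extend to piecewise affine targets by gluing, then to general $u\in W^{1,1}$ by density, and finally to $u\in BV$ by a relaxation argument which makes $f_{\rm hom}^{\infty}$ appear naturally. The key ingredient is the fundamental estimate (Lemma \ref{l.fundamental_estimate}), which permits merging two admissible sequences on overlapping pieces at a cost controlled by $(1/\delta)\int|\Lambda(\omega,x/\e)||u_\e-v_\e|\,\mathrm{d}x$; combined with the vectorial truncation (Lemma \ref{l.truncation}), this allows one to enforce affine boundary data without spoiling the energy, the delicate point peculiar to the degenerate linear setting.

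\textbf{Lower bound.} Given $u_\e\to u$ in $L^1$ with $\sup_\e F_\e(\omega)(u_\e,A)<\infty$, the $L^{\infty}$-bound on $\Lambda^{-1}$ combined with (A2) yields $|\nabla u_\e|\leq c_0^{-1}|\nabla u_\e\,\Lambda(\omega,\cdot/\e)|$, so $(u_\e)$ is bounded in $BV(A,\R^m)$ and $u\in BV(A,\R^m)$. I would then apply the Fonseca--M\"uller blow-up method in its $BV$-version: set $\nu_\e(\omega,\cdot):=F_\e(\omega)(u_\e,\cdot)$, extract a weak-$*$ limit measure $\nu$ and decompose $\nu=\tilde f\,\mathrm{d}x+\nu^s$. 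At a Lebesgue point $x_0$ of $\nabla u$ at which $\tilde f$ is differentiable with respect to Lebesgue measure, rescaling by $\rho\to 0$ and inserting the blown-up sequence as a competitor in the cell problem for $\nabla u(x_0)$ at scale $\rho/\e$ gives \eqref{intro:1}. At $|\D^s u|$-a.e.\ point, a similar blow-up, combined with an approximation of the rescaled competitor by affine boundary data (again via Lemmas \ref{l.truncation}--\ref{l.fundamental_estimate}), produces the recession inequality \eqref{intro:2}. Integrating both and summing over the Lebesgue decomposition of $\nu$ yields the $\Gamma$-$\liminf$.

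The main obstacle I anticipate is the boundary matching in the upper bound. Because $\Lambda$ is only $L^1$ and not $L^{\infty}$, a na\"ive cut-off multiplies the gradient by $|\Lambda|$ in the transition layer and cannot be controlled directly; the fundamental estimate has to be coupled with a truncation whose level is tuned via Chebyshev's inequality, exploiting the integrability of $|\Lambda(\cdot,0)|$. A secondary subtlety is the blow-up at Cantor points for the singular part, since Theorem \ref{thm:ergodic} gives limits only along dilations of a \emph{fixed} set, so a Vitali-type covering is needed to recover \eqref{intro:2} at $|\D^{(c)}u|$-a.e.\ point.
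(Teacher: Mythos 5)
Your overall architecture coincides with the paper's: $\mu_\xi$ as a subadditive process plus the Akcoglu--Krengel theorem for (i)--(ii), recovery sequences for piecewise affine targets followed by relaxation for the $\Gamma$-$\limsup$, and the Fonseca--M\"uller blow-up in its $BV$ version for the $\Gamma$-$\liminf$, with the fundamental estimate and vectorial truncation doing the boundary matching. You also correctly identify the two genuinely delicate points (boundary layers with an $L^1$ weight; the singular part). The extension from rational to arbitrary $\xi$ is done in the paper exactly as you suggest, except that one cannot compare $\mu_\xi$ and $\mu_{\xi_j}$ by a bare triangle inequality (the boundary datum changes with $\xi$); a cut-off between concentric cubes is needed, which yields continuity with an error of order $(|\xi-\xi_j|/\delta+|\xi|+|\xi_j|+1)$ times the volume of the transition annulus --- this is a refinement of your idea, not a different one.

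There is, however, one genuine gap: your justification of the quasiconvexity of $f_{\rm hom}$ by ``periodically extending an admissible competitor'' does not work here. That device belongs to periodic homogenization; in the stochastic setting $f(\omega,\cdot,\xi)$ is not periodic, so the periodic extension of a cell-problem competitor is not admissible at larger scales and its energy is not controlled. The paper instead proves quasiconvexity \emph{a posteriori} from the already-established $\Gamma$-$\limsup$ inequality on $W^{1,1}$: given $u\in C^1_0(Q,\R^m)$, take a recovery sequence for $\ell_\xi+u$, truncate it (Lemma \ref{l.truncation}), glue it to $\ell_\xi$ near $\partial Q$ via the fundamental estimate (Remark \ref{rmk:sf}), and use the resulting function as a competitor in $\mu_\xi(\w,\e^{-1}Q)$; letting $\e\to 0^+$ and then the cut-off parameters tend to zero gives $|Q|f_{\rm hom}(\xi)\leq\int_Q f_{\rm hom}(\xi+\nabla u)\dx$. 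This is not a cosmetic point: quasiconvexity is needed both to invoke the relaxation theorem of Ambrosio--Dal Maso that produces the $BV$ form \eqref{F-hom}, and (through rank-one convexity) to guarantee in the blow-up at singular points that $f_{\rm hom}(t\,\eta\otimes n)/t$ converges along the particular diverging sequence $t_h^k k^{1-d}$ to $f^\infty_{\rm hom}(\eta\otimes n)$, rather than merely having a $\limsup$. Relatedly, at singular points the paper does not use a Vitali covering: it relies on the Ambrosio--Dal Maso blow-up structure theorem, which represents the blow-up limit as $\psi(\langle y,n\rangle)\eta$ on anisotropic rectangles $C^k_r(x_0)$ and reduces \eqref{intro:2} to a one-dimensional estimate on $\psi$; your covering idea would have to be replaced by (or reduced to) this structure result to treat the Cantor part.
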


The two following remarks are in order. 

\begin{remark}[Assumptions on $f$]\label{r.on_assump}
Below we comment on the requirements on the integrand $f$ as in Assumption \ref{a.1}. 
\begin{itemize}
	\item[a)] The continuity property in (A1) is needed to prove the existence of the limit in \eqref{lim-f-hom} via Theorem \ref{thm:ergodic}. Specifically, (A1) is only needed to prove the measurability of the process. 
	\item[b)] The growth condition in \eqref{gc} can be replaced by the weaker condition
	\begin{equation*}
		\alpha |\xi \Am(\w,x)|-\lambda(\w,x)\leq f(\w,x,\xi)\leq |\xi \Am(\w,x)|+\lambda(\w,x),
	\end{equation*}
	for every $\omega \in \Omega$, $x\in \R^d$, and $\xi \in \R^{m \times d}$.
	Indeed the integrand $\tilde{f}(\w,x,\xi):=f(\w,x,\xi)+\lambda(\w,x)$ satisfies Assumption \ref{a.1} with $\lambda$ replaced by $2\lambda$. Therefore for such an integrand Theorem \ref{thm.Gamma_pure} gives the same $\Gamma$-limit as in \eqref{F-hom} up to the additive constant $\mathbb{E}[\lambda(\cdot,0)]|A|$.
	\item[c)] The integrability assumptions on $\lambda (\cdot,0)$ and $\Lambda (\cdot, 0)$ together with (A3) and Fubini's Theorem imply that for $\mathbb P$-a.e. $\w\in\Omega$ there holds: $|\Am(\w,\cdot)|,\lambda(\w,\cdot)\in L^1_{\rm loc}(\R^d)$. Furthermore the $L^\infty(\Omega)$-bound on $|\Lambda (\cdot,0)|^{-1}$ combined with \cite[Lemma 7.1]{JKO} gives that, for $\mathbb P$-a.e. $\w\in\Omega$, $|\Am(\w,\cdot)^{-1}|\in L^{\infty}(\R^d)$ with a bound uniform with respect to $\w$. Therefore, in particular, for $\mathbb P$-a.e. $\w\in\Omega$ the following holds true: there exists a (deterministic) constant $C>0$ such that for a.e. $x\in\R^d$ and for every $\xi\in\R^{m\times d}$ we have
	\begin{equation}\label{eq:linear_from_below}
	|\xi|\leq |\xi \Am(\w,x)||\Am(\w,x)^{-1}|\leq C|\xi \Am(\w,x)|\leq \frac{C}{\alpha} f(\w,x,\xi). 	
	\end{equation}
\item[d)] In (A3) the ergodicity assumption on $\tau$ can be dropped. In fact if $f, \lambda$, and $\Lambda$ are only stationary, that is, they satisfy (A3) with respect to a measure-preserving group-action $\tau$ which is not necessarily ergodic, then an almost sure homogenization result for $F_\e(\omega)$ can still be established. However in this case the homogenization is not ``effective'' meaning that $f_{\rm hom}$ is still a random variable. Namely, we have
\[
f_{\rm hom}(\omega,\xi):=  \lim_{t\to +\infty}\frac{1}{t^d}\inf\left\{\int_{Q_t(0)}f(\w, x, \nabla u)\dx :\,u\in \ell_\xi +W^{1,1}_0(Q_t(0),\R^m)\right\};
\] 	
moreover, in this case $f_{\rm hom}$ satisfies the following bounds
\begin{equation*}
	\alpha\,c_0|\xi|\leq f_{\rm hom}(\xi)\leq C_0(\w)|\xi|+C_1(\w),
\end{equation*}
with $c_0$ as in \eqref{lim-const} and
\begin{equation*}
C_0(\w):=\hspace{-0.2cm}\sup_{\eta \in \R^{m\times d}, |\eta|=1}\hspace{-0.2cm}\mathbb{E}[|\eta \Am(\cdot,0)||\mathcal{F}_\tau],\quad  C_1(\w):=\mathbb{E}[\lambda(\cdot,0)|\mathcal{F}_\tau].
\end{equation*}
\end{itemize}
\end{remark} 

\begin{remark}[Optimality of the assumptions on $\Am(\cdot,0)$]\label{r.optimal}
	Both the assumption $|\Am(\cdot,0)| \in L^1(\Omega)$ and $|\Am(\cdot,0)^{-1}| \in L^\infty(\Omega)$ are optimal in the sense that if we drop one of the two we can either get $f_{\rm hom}(\xi)= +\infty$ on a subspace of $\R^{m\times d}$ or a loss of compactness in $BV$, meaning that interfaces can be approximated at zero cost. 
	
	Both these effects can be shown by adapting an example in \cite[Remark 3.7]{NSS} of a discrete laminate-like structure to our setting. For the sake of the exposition we treat here the case $m=1$ and consider a sequence $(a_k)_{k\in\Z}:\Omega\to (0,+\infty)$ of i.i.d. random variables. For every $x\in \R^d$ we define the piecewise constant interpolation corresponding to $(a_k)_{k\in\Z}$ as
	\[
	a(\omega,x):=a_k(\omega)\quad \text{if \;$x\in [k,k+1)$,\; $k\in\Z$}.  
	\]
	On the product space $\Omega^{\Z}$ one can define a stationary, ergodic group-action which turns $a$ into a $\Z$-stationary, ergodic function. (If one is interested in an $\R^d$-stationary, ergodic example, one can turn $a$ into a stationary and ergodic function with respect to all translations on the extended probability space $\mathbb{T}\times\Omega$, where $\mathbb T$ is the torus in $\R^d$, by setting $a((z,\w),x):=a(\w,z+x)$, thus preserving the piecewise-constant structure. See \cite[p. 236]{JKO} for more details.) 
	
	For $x:=(x_1,x')\in \R^d$ define $f(\w,x,\xi)=|a(\w,x_1)\xi|$, so that $\Am(\w,x)=a(\w,x_1)I_d$, where $I_d$ is the identity matrix in $\R^{d\times d}$. Assume that $\mathbb{E}[a(\cdot,0)]=+\infty$ so that  $|\Am(\cdot,0)| \notin L^1(\Omega)$ and for $k\in\N$ let $u\in W^{1,1}_0(kQ)$, with $Q:=(0,1)^d$. Defining the lower dimensional cube $Q':=(0,1)^{d-1}$, for a.e. $x_1\in (0,k)$ it holds that $u(x_1,\cdot)\in W_0^{1,1}(kQ')$ and 
	\begin{equation*}
		\dashint_{kQ}|a(\w,x_1)(\nabla u(x)+\xi)|\dx\geq\dashint_0^ka(\w,x_1) \dashint_{kQ'}|\nabla_{x'}u(x_1,x')+(\xi_2, \dots, \xi_d)|{\rm d}x'\,\mathrm{d}x_1.
	\end{equation*}
	The inner integral on the right-hand side is minimal for $u(x_1,\cdot)\equiv 0$ due to Jensen's Inequality. Hence
	\begin{equation*}
		\inf\left\{\dashint_{kQ}|a(\w,x)(\nabla u(x)+\xi)|\dx:\,u\in W^{1,1}_0(kQ)\right\}\geq |(\xi_2, \dots, \xi_d)|\dashint_{0}^ka(\w,x_1)\dx_1.
	\end{equation*}
	Combining a truncation of the weight $a$ with the Ergodic Theorem \ref{thm.additiv_ergodic}, for $\xi\notin\R e_1$ it follows that a.s.
	\begin{equation}\label{eq:toinfinity}
		f_{\rm hom}(\xi):=\lim_{k\to +\infty}\inf\left\{\dashint_{kQ}|a(\w,x)(\nabla u(x)+\xi)|\dx:\,u\in W^{1,1}_0(kQ)\right\}=+\infty.
	\end{equation}
	Next we consider the case when $a(\cdot,0)^{-1}\notin L^{\infty}(\Omega)$. Then for every $\delta>0$ there exists a set $\Omega_{\delta}$ with positive probability such that $|a_0(\w)|<\delta$ for all $\w\in\Omega_{\delta}$. In particular, by stationarity this implies that $\mathbb{P}(|a_k|<\delta)=p_{\delta}>0$ for all $k\in\N$. Moreover, the independence yields 
	\begin{equation*}
	\sum_{n=0}^{+\infty}\mathbb{P}\left(\min_{0\leq k\leq n}|a_k|\geq \delta\right)=\sum_{n=0}^{+\infty}\mathbb{P}\left(|a_k|\geq \delta\;\text{ for all }0\leq k\leq n\right)=\sum_{n=0}^{+\infty}(1-p_{\delta})^{n+1}<+\infty.
	\end{equation*}
	Hence the Borel-Cantelli Lemma implies that a.s. there exists $k_\delta=k_{\delta}(\w)$ such that $|a_{k_\delta}(\w)|<\delta$. For $\e>0$ we define the piecewise affine function $u_{\e}(x):=\max\{0,\min\{1,\e^{-1}x_1-k_{\delta}\}\}$ (this sequence has to be slightly shifted if we work on the extended probability space). Clearly, $u_{\e}(x)=0$ for $x_1\leq\e k_{\delta}$ and $u_{\e}(x)=1$ for $x_1> (k_{\delta}+1)\e$. Hence $u_{\e}\to \chi_{\{x_1<0\}}$ in $L^1_{\rm loc}(\R^d)$ and $\nabla u_\e$ is concentrated on the stripe $\{x\in \R^d \colon k_{\delta}\e\leq x_1\leq (k_{\delta}+1)\e\}$, where the weight-function is smaller than $\delta$ by construction. Therefore we have 
	\begin{equation*}
	F_{\e}(\w)(u_{\e},Q)\leq \int_{\e k_{\delta}}^{\e (k_{\delta}+1)}\delta \e^{-1}\,\mathrm{d}x_1=\delta,
	\end{equation*}
	thus by the arbitrariness of $\delta>0$ we deduce that
	\begin{equation*}
		\Gamma\hbox{-}\lim_{\e\to 0^+}F_{\e}(\w)(\chi_{\{x_1<0\}},Q)=0,
	\end{equation*}
	and hence the claim.
	
	With some more effort one can prove the same result for any plane interface of the form $\chi_{\{x_1<r\}}$ with $r\in\Q$ by applying the same Borel-Cantelli-type argument to the events
	\begin{equation*}
		\left\{\min_{0\leq k\leq n}|a_{\lceil\tfrac{r}{\e}\rceil+k}|\geq\delta\right\},
	\end{equation*}
	whose probabilities do not depend on $r$ thanks to stationarity.
	Then by lower semicontinuity the same holds for all $r\in\R$. Since the constructions are local, eventually one can then approximate a $BV$-function with arbitrarily large $BV$-norm paying zero energy. 
\end{remark}

\subsection{Dirichlet boundary conditions} 
In this subsection we state a homogenization result for the functionals $F_\e(\w)$ subject to Dirichlet boundary conditions. To this end, we need to define a class of admissible boundary data. Since the weight function $\Lambda(\omega,\cdot)$ only belongs to $L^1_{\rm loc}(\R^d)$, we need to consider sufficiently regular boundary data.

\begin{assumption}[Admissible boundary data]\label{a.2}
The function $u_0$ belongs to $W^{1,1}(\R^d,\R^m)$. Moreover there exists $\widehat \Omega \in \F$ with $\mathbb P(\widehat \Omega)=1$ such that for every $\omega \in \widehat \Omega$ the sequence of functions $(M_\e(\w))_\e$ defined as
\begin{equation}\label{eq:boundarymeasure}
M_{\e}(\w)(x):=|\nabla u_0(x)\Lambda(\w,\tfrac{x}{\e})|+|u_0(x)||\Am(\w,\tfrac{x}{\e})|
\end{equation}
is locally equi-integrable on $\R^d$.
\end{assumption}
We notice that in view of Theorem \ref{thm.additiv_ergodic} Lipschitz-functions with compact support satisfy \eqref{eq:boundarymeasure} and hence are admissible boundary data. However, fixing a bounded, open set $A$ with Lipschitz boundary, this restricts the boundary value $u_0|_{\partial A}$ also to Lipschitz-functions. 

Since $|\Lambda(\omega,\cdot)|$ only belongs to $L^1_{\rm loc}(\R^d)$, the request in \eqref{eq:boundarymeasure} is necessary in order to have at least one competitor for the minimization problem with Dirichlet boundary condition $u_0$. 

On the other hand, \eqref{eq:boundarymeasure} can be relaxed when $|\Lambda(\cdot,0)|$ has higher stochastic moments. In fact, H\"older's Inequality ensures that $M_{\e}(\w)$ is always locally equi-integrable on $\R^d$ when $u_0\in W^{1,p}(\R^d,\R^m)$ and $|\Lambda(\cdot,0)|\in L^{p/(p-1)}(\Omega)$ for some $p\in [1,+\infty)$.

Let $A\in \A$ and consider the functionals defined as
\begin{equation}\label{Fe-D}
F^{u_0}_{\e}(\w)(u,A):=\begin{cases}
\displaystyle\int_A f(\w,\tfrac{x}{\e},\nabla u)\dx &\mbox{if $u\in u_0+W_0^{1,1}(A,\R^m)$,}
\\
+\infty &\mbox{otherwise in $L^1(A,\R^m)$},
\end{cases}
\end{equation}
where $f$ satisfies Assumption \ref{a.1} and $u_0$ satisfies Assumption \ref{a.2}. 
The following $\Gamma$-convergence result holds true.
\begin{theorem}[Stochastic homogenization with Dirichlet boundary conditions]\label{thm:bc}
Let $f$ satisfy Assumption \ref{a.1} and let $u_0$ satisfy Assumption \ref{a.2}. Then, almost surely, the functionals $F^{u_0}_{\e}(\w)(\cdot,A)$ defined in \eqref{Fe-D} $\Gamma$-converge in $L^1(A,\R^m)$ to the deterministic functional $F^{u_0}_{\rm hom}(\cdot,A)$ given by
\begin{equation*}
F^{u_0}_{\rm hom}(u,A):=
\begin{cases}
\displaystyle\int_A f_{\rm hom}(\nabla u)\dx+\int_A f_{\rm hom}^{\infty}\left(\frac{\mathrm{d}D^su}{\mathrm{d}|D^su|}\right)\,\mathrm{d}|D^su|+\int_{\partial A}f_{\rm hom}^{\infty}\left((u_0^+-u^-)\otimes n_{\partial A}\right)\,\mathrm{d}\mathcal{H}^{d-1}\\ 
\hspace{8.6cm} \mbox{if $u\in BV(A,\R^m)$,}
\cr \cr
+\infty \hspace{8cm} \mbox{otherwise in $L^1(A,\R^m)$,}
\end{cases}
\end{equation*}
for every $A\in \A$. 
\end{theorem}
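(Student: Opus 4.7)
The strategy is to reduce Theorem~\ref{thm:bc} to the free $\Gamma$-convergence result of Theorem~\ref{thm.Gamma_pure} by working on a slightly larger set $A'\in\mathcal{A}$ with $A\subset\subset A'$. Admissible competitors $u\in u_0+W^{1,1}_0(A,\R^m)$ are extended by $u_0$ to all of $A'$, producing functions in $W^{1,1}(A',\R^m)$. The Dirichlet boundary condition on $\partial A$ then translates, in the $BV$-limit, into a possible jump of the extended limit across $\partial A$, so that the boundary integral in $F^{u_0}_{\rm hom}$ arises naturally as the jump-part of the free $\Gamma$-limit on $A'$.

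For the liminf inequality, given $u_\e\in u_0+W^{1,1}_0(A,\R^m)$ with $u_\e\to u$ in $L^1(A,\R^m)$ and equibounded energy, I would set $\tilde u_\e:=u_\e\chi_A+u_0\chi_{A'\setminus\bar A}\in W^{1,1}(A',\R^m)$, so that $\tilde u_\e\to\tilde u:=u\chi_A+u_0\chi_{A'\setminus\bar A}$ in $L^1(A',\R^m)$, with $\tilde u\in BV(A',\R^m)$ carrying a jump on $\partial A$ whose outer trace is $u_0^+$ and whose inner trace is $u^-$. Applying Theorem~\ref{thm.Gamma_pure}(iii) on $A'$ yields
\[
\liminf_{\e\to 0^+}F_\e(\w)(\tilde u_\e,A')\geq F_{\rm hom}(\tilde u,A')=F^{u_0}_{\rm hom}(u,A)+\int_{A'\setminus\bar A}f_{\rm hom}(\nabla u_0)\dx.
\]
Since $F_\e(\w)(\tilde u_\e,A')=F^{u_0}_\e(\w)(u_\e,A)+\int_{A'\setminus\bar A}f(\w,\tfrac{x}{\e},\nabla u_0)\dx$, combining (A2) with Assumption~\ref{a.2} and Theorem~\ref{thm.additiv_ergodic} gives, $\mathbb{P}$-almost surely, the deterministic bound $\limsup_\e\int_{A'\setminus\bar A}f(\w,\tfrac{x}{\e},\nabla u_0)\dx\leq C\int_{A'\setminus\bar A}(|\nabla u_0|+1)\dx$; both this quantity and the outer $f_{\rm hom}$-integral vanish as $A'\downarrow A$, producing $\liminf_\e F^{u_0}_\e(\w)(u_\e,A)\geq F^{u_0}_{\rm hom}(u,A)$.

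For the limsup, Theorem~\ref{thm.Gamma_pure}(iii) produces a recovery sequence $\tilde u_\e\to\tilde u$ in $L^1(A',\R^m)$ with $\limsup_\e F_\e(\w)(\tilde u_\e,A')\leq F_{\rm hom}(\tilde u,A')$. The restriction $\tilde u_\e|_A$ generally fails to match $u_0$ on $\partial A$, so I would apply the fundamental estimate (Lemma~\ref{l.fundamental_estimate}) on nested collars of width $O(\delta/N)$ near $\partial A$, together with an averaging/pigeonhole choice of cutoff $\eta_k$, in order to glue $\tilde u_\e$ in the interior of $A$ with $u_0$ near $\partial A$. This yields $\hat u_\e\in u_0+W^{1,1}_0(A,\R^m)$ with $\hat u_\e\to u$ in $L^1(A,\R^m)$; the gluing cost reduces to the sum of $\int|\nabla\eta_k||\tilde u_\e-u_0||\Lambda(\w,\tfrac{x}{\e})|\dx$ and the energy of $u_0$ on a collar inside $A$, both of which vanish in the limit $\e\to 0^+$ followed by $N\to+\infty$ and $A'\downarrow A$, thanks to Assumption~\ref{a.2}.

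The main obstacle is the boundary matching in the limsup: a quantitative fundamental estimate is required in the degenerate weighted setting, with gluing error absorbed by the equi-integrability of $M_\e(\w)$ from Assumption~\ref{a.2}. Without this hypothesis, the cutoff-gradient contribution $|\nabla\eta||\tilde u_\e-u_0||\Lambda(\w,\tfrac{x}{\e})|$ could not be controlled and the recession-type boundary term in $F^{u_0}_{\rm hom}$ would not be recovered as the sole trace effect of the Dirichlet condition.
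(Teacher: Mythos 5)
Your liminf argument is essentially the paper's: extend $u_\e$ by $u_0$ to a slightly larger Lipschitz set, apply the unconstrained $\Gamma$-liminf there, and kill the outer contribution using the local equi-integrability of $M_\e(\w)$ from Assumption \ref{a.2} as the collar shrinks. (Your intermediate bound $\limsup_\e\int f(\w,\tfrac{x}{\e},\nabla u_0)\leq C\int(|\nabla u_0|+1)$ is not what (A2) gives — the correct statement is smallness of $\int_{A_r}(M_\e(\w)+\lambda(\w,\tfrac{\cdot}{\e}))$ uniformly in $\e$ for $r$ small — but the mechanism you invoke is the right one.)

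The limsup part has two genuine gaps. First, you glue the recovery sequence $\tilde u_\e$ with $u_0$ on a collar \emph{inside} $A$ for an arbitrary $u\in BV(A,\R^m)$. On such a collar $S$ of width $r$, the error term of Lemma \ref{l.fundamental_estimate} is $\tfrac{4}{r}\int_S|\tilde u_\e-u_0||\Am(\w,\tfrac{x}{\e})|\dx$, and even after passing to the limit in $\e$ this behaves like $\tfrac{C}{r}\int_S|u-u_0|\dx$, which as $r\to 0^+$ converges to a trace integral comparable to $\int_{\partial A}|u^--u_0^+|\dHd$ rather than to zero. Since the boundary energy $\int_{\partial A}f^\infty_{\rm hom}((u_0^+-u^-)\otimes n_{\partial A})\dHd$ is already accounted for in $F_{\rm hom}(\tilde u,A')$ through the jump of $\tilde u$ across $\partial A$, this produces an uncontrolled \emph{additional} boundary cost and the limsup inequality fails. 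This is precisely why the paper first reduces to target functions $u\in u_0+C_c^\infty(A,\R^m)$ (via the strict-approximation results of Schmidt and Kristensen--Rindler together with the lower semicontinuity of the $\Gamma$-limsup, letting $A'\searrow\overline A$ at the end): then $u=u_0$ identically on the gluing collar and the error term genuinely vanishes. Second, even on such a collar you cannot pass to the limit in $\int_S|\tilde u_\e-u_0||\Am(\w,\tfrac{x}{\e})|\dx$ without first truncating the recovery sequence (Lemma \ref{l.truncation}): Assumption \ref{a.2} controls only $|u_0||\Am(\w,\tfrac{\cdot}{\e})|$ and $|\nabla u_0||\Am(\w,\tfrac{\cdot}{\e})|$, not $|\tilde u_\e||\Am(\w,\tfrac{\cdot}{\e})|$, and the product of a merely $L^1$-bounded sequence with a weight converging only weakly in $L^1$ can concentrate. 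The paper's argument needs the $L^\infty$-bound on $u_{\e,\eta}$ together with \cite[Proposition 2.61]{FoLe} and the sets $L_k=\{|u_0|\leq k\}$ to obtain $|u_{\e,\eta}-u_0||\Am(\w,\tfrac{\cdot}{\e})|\rightharpoonup\mathbb{E}[|\Am(\cdot,0)|]\,|u_\eta-u_0|$ in $L^1$, and a final dominated-convergence step in $\eta$. Both ingredients are missing from your sketch and neither is a routine detail.
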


\smallskip

We observe that in the statement of Theorem \ref{thm:bc} the boundary integral
\[
\int_{\partial A}f_{\rm hom}^{\infty}\left((u_0^+-u^-)\otimes n_{\partial A}\right)\,\mathrm{d}\mathcal{H}^{d-1}
\]
represents the energy contribution of the function $u\chi_A+u_0(1-\chi_A)$ restricted to $\partial A$ and penalises the violation of the boundary constraint $u=u_0$ on $\partial A$, meant in the sense of traces.

\begin{remark}[Linear forcing terms]\label{r.bc}
Let $q > d$ and let $(g_\e) \subset L^q_{\rm loc}(\R^d,\R^m)$ be such that 
$g_{\e}\rightharpoonup g$ weakly in $L^q_{\rm loc}(\R^d,\R^m)$, for some $g\in L^q_{\rm loc}(\R^d,\R^m)$.
Consider the linear functionals
\[
G_\e(u,A):=\int_A g_{\e}u\dx \quad \text{and} \quad G(u,A):=\int_A g u\dx
\]
and the perturbed functionals $F_\e(\omega)(\cdot,A)+G_\e(\cdot,A)$. By assumption $G_\e$ can be regarded as a continuously converging perturbation, therefore it is immediate to check that Theorem \ref{thm.Gamma_pure} also yields the almost sure $\Gamma$-convergence of $F_\e(\omega)(\cdot,A)+G_\e(\cdot,A)$  to $F_{\rm hom}(\cdot,A)+G(\cdot,A)$, for every $A\in \A$. An analogous result can be proven for the functionals $F^{u_0}_\e(\omega)(\cdot,A)+G_\e(\cdot,A)$.
\end{remark}


\section{Existence of $f_{\rm hom}$}\label{sec:existence}

This section is devoted to proving the existence of the spatially homogeneous and deterministic integrand $f_{\rm hom}$. The proof of this result will be achieved by following a classical strategy introduced in \cite{DMMoII} and based on the Subadditive Ergodic Theorem \cite[Theorem 2.7]{AkKr}. 

\begin{lemma}[homogenization formula]\label{l.existence_f_hom}
Let $f$ satisfy Assumption \ref{a.1};  then, there exists $\Omega'\in \mathcal F$ with $\P(\Omega')=1$ such that for every $\w\in \Omega'$, $x_0\in \R^d$, $\rho >0$, and every $\xi \in \R^{m\times d}$ there exists
\begin{align}\nonumber
& \lim_{t\to +\infty}\frac{1}{|tQ_\rho(x_0)|} \inf\left\{\int_{tQ_\rho(x_0)}f(\w, x, \nabla u)\dx :\,u\in \ell_\xi +W^{1,1}_0(tQ_\rho(x_0),\R^m)\right\}
\\\label{c:det}
&=  \lim_{t\to +\infty}\frac{1}{t^d}\mathbb E \left[\inf\left\{\int_{Q_t(0)}f(\cdot, x, \nabla u)\dx :\,u\in \ell_\xi +W^{1,1}_0(Q_t(0),\R^m)\right\}\right]=:f_{\rm hom}(\xi).
\end{align}
The homogeneous and deterministic function $f_{\rm hom}$ is continuous and for every $\xi \in \R^{m\times d}$ it satisfies the following standard linear growth conditions
\begin{equation*}
	\alpha\,c_0|\xi|\leq f_{\rm hom}(\xi)\leq C_0|\xi|+C_1,
\end{equation*}
with 
\begin{equation*}
\text{$c_0:=\||\Lambda(\cdot,0)^{-1}|\|_{L^{\infty}(\Omega)}^{-1}$,\quad $C_0:=\sup_{\eta \in \R^{m\times d}, |\eta|=1}\mathbb{E}[|\eta \Am(\cdot,0)|]$,\quad and $C_1:=\mathbb{E}[\lambda(\cdot,0)]$.}
\end{equation*} 
\end{lemma}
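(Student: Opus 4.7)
The plan is to follow the classical Dal Maso--Modica strategy and apply the Subadditive Ergodic Theorem \ref{thm:ergodic} to the set function
\begin{equation*}
\mu_\xi(\omega,A):=\inf\left\{\int_A f(\omega,x,\nabla u)\dx :\,u\in\ell_\xi+W^{1,1}_0(A,\R^m)\right\},
\end{equation*}
viewed for each fixed $\xi\in\R^{m\times d}$ as a process on $\Omega\times\A$. Once the limit is identified for a countable dense family of $\xi$, I would extend it to all of $\R^{m\times d}$ using the continuity of the resulting function.

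The first task is to verify the four defining properties of a subadditive process. Measurability of $\omega\mapsto\mu_\xi(\omega,A)$ will be obtained by reducing the infimum to a countable family of test functions, exploiting the joint measurability of $f$ together with its lower semicontinuity in $\xi$ from (A1); this step is the content of the appendix Lemma \ref{l.oninf}. Stationarity, in the form $\mu_\xi(\omega,A+z)=\mu_\xi(\tau_z\omega,A)$, will follow from the change of variables $y=x-z$ together with (A3), noting that $u\in\ell_\xi+W^{1,1}_0(A+z,\R^m)$ if and only if $v(y):=u(y+z)-\xi z$ belongs to $\ell_\xi+W^{1,1}_0(A,\R^m)$. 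Subadditivity will be proven by the standard gluing construction: since almost-minimizers on the disjoint pieces $A_i$ share the common trace $\ell_\xi$ on their boundaries, they paste into an admissible competitor on $A$. Finally, the expectation bound is obtained by testing with $u=\ell_\xi$, which gives
\begin{equation*}
\mu_\xi(\omega,A)\leq\int_A f(\omega,x,\xi)\dx\leq\int_A\bigl(|\xi\Am(\omega,x)|+\lambda(\omega,x)\bigr)\dx;
\end{equation*}
taking expectations and using (A3) yields the required linear-in-$|A|$ control.

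With the subadditive structure in place, Theorem \ref{thm:ergodic} produces, for every fixed $\xi$, a set $\Omega'_\xi$ of full probability and a random variable $\phi_\xi$ such that $\mu_\xi(\omega,tQ_\rho(x_0))/|tQ_\rho(x_0)|\to\phi_\xi(\omega)$ on every cube. Ergodicity of $\tau$, combined with the $\tau$-invariance of $\phi_\xi$, forces $\phi_\xi$ to be deterministic; dominated convergence (applied via the uniform-in-$t$ $L^1(\Omega)$-bound from the previous step) then yields the expected-value representation in \eqref{c:det}. Intersecting $\Omega'_\xi$ over $\xi\in\Q^{m\times d}$ gives a single full-measure set $\Omega'$ on which the limit exists for every rational $\xi$ and every cube. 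The upper bound $f_{\rm hom}(\xi)\leq C_0|\xi|+C_1$ is then read off from the same $u=\ell_\xi$ competitor, and the lower bound $f_{\rm hom}(\xi)\geq\alpha c_0|\xi|$ follows by combining \eqref{eq:linear_from_below} with Jensen's inequality applied to the identity $\int_A\nabla u\dx=\xi|A|$ valid on the admissible class.

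The main obstacle is the continuity of $f_{\rm hom}$, since $f$ itself is neither convex nor Lipschitz in $\xi$ and the bound one obtains from direct competitor comparison has an unwanted $1/\alpha$ factor. I would circumvent this by first establishing that $f_{\rm hom}$ is \emph{quasiconvex}: using periodic test functions on a large cube built out of almost-minimizers for $\mu_{\xi+\nabla\varphi(x)}$ on small translated subcubes, one verifies Morrey's quasiconvexity inequality $f_{\rm hom}(\xi)\leq\dashint_A f_{\rm hom}(\xi+\nabla\varphi)\dx$ for every $\varphi\in W^{1,\infty}_0(A,\R^m)$. Quasiconvex functions on $\R^{m\times d}$ with at most linear growth are automatically locally Lipschitz, hence continuous; this continuity then allows the homogenization formula, previously established only on $\Q^{m\times d}$, to be extended to arbitrary $\xi\in\R^{m\times d}$ on the same full-measure set $\Omega'$.
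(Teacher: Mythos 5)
Your skeleton coincides with the paper's (view $\mu_\xi$ as a subadditive process, apply Theorem \ref{thm:ergodic} for each fixed $\xi$, intersect over $\xi\in\Q^{m\times d}$, then extend), but the extension step contains a genuine gap. Knowing that $\xi\mapsto f_{\rm hom}(\xi)$ is continuous on $\Q^{m\times d}$ (however you obtain that continuity) does \emph{not} imply that the limit $\lim_t\mu_\xi(\w,tQ)/|tQ|$ exists for irrational $\xi$: continuity of the limit function says nothing about the finite-$t$ quantities at the missing parameters. What is needed is a quantitative, uniform-in-$t$ comparison between $\mu_\xi(\w,tQ)$ and $\mu_{\xi_j}(\w,t'Q)$ for nearby cubes, i.e.\ an almost-equicontinuity estimate in $\xi$ of the pre-limit quantities. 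The paper produces exactly this by gluing a competitor for $\mu_\xi$ on $Q_{\rho t}$ to the affine datum $\ell_{\xi_j}$ on a slightly larger cube via a cut-off, which yields \eqref{c:uno}--\eqref{c:due} with an error term $C\bigl(\tfrac{\rho+|x|}{\rho\delta}|\xi-\xi_j|+|\xi|+|\xi_j|+1\bigr)$ times $\int_{Q_{\rho(1+\delta)t}\setminus Q_{\rho t}}(|\Am|+\lambda)$, controlled in the limit by the additive ergodic Theorem \ref{thm.additiv_ergodic}. That estimate simultaneously gives existence of the limit for all real $\xi$ \emph{and} the continuity of $f_{\rm hom}$, so the detour through quasiconvexity is both insufficient and unnecessary here. (It is also delicate to run: Morrey's inequality requires $f_{\rm hom}$ to be evaluated at $\xi+\nabla\varphi(x)$, which are not rational, and the gluing of almost-minimizers over subcubes needs the very same boundary-layer control via cut-offs and the ergodic theorem; in the paper quasiconvexity is only proved later, in Step 2 of Proposition \ref{p.ub}, using the $\Gamma$-limsup inequality, the truncation Lemma \ref{l.truncation} and the fundamental estimate.)

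Two secondary points. First, you assert the $\tau$-invariance of $\phi_\xi$ but Theorem \ref{thm:ergodic} does not provide it; it must be proved, which the paper does by combining $\mu_\xi(\tau_z\w,tQ)=\mu_\xi(\w,t(Q+z/t))$ with a sandwich $Q'\subset Q+z/t\subset Q''$, subadditivity, the pointwise bound \eqref{eq:pointwisebound} and Theorem \ref{thm.additiv_ergodic}. Second, a uniform-in-$t$ $L^1(\Omega)$ bound does not license dominated convergence; to identify the a.s.\ limit with the limit of expectations in \eqref{c:det} you need uniform integrability (e.g.\ domination by the ergodic averages of $|\xi\Am|+\lambda$, which converge in $L^1(\Omega)$) or, as the paper does, a direct appeal to the mean convergence result for subadditive processes in \cite[Theorem 2.3]{Kr}. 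The verification of the four defining properties of the subadditive process and the two growth bounds on $f_{\rm hom}$ are carried out exactly as you propose.
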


\begin{proof}
Let $\xi\in \R^{m \times d}$ be fixed. For every $\omega\in \Omega$ and $A \in \A$ set

\begin{equation}\label{c:mu-xi}
		\mu_{\xi}(\w,A):=\inf\left\{\int_{A}f(\w, x, \nabla u)\dx :\,u\in \ell_\xi +W^{1,1}_0(A,\R^m)\right\}.
\end{equation}
We claim that $\mu_\xi$ is a subadditive process\ie $\mu_\xi$ satisfies properties $(1)\hbox{-}(4)$ in Definition \ref{def:sub-p}.  
	
	We start observing that for every $A\in \A$ the $\mathcal F$-measurability of $\mu_\xi(\cdot,A)$ follows by Lemma \ref{l.measurable} in the appendix.  Moreover, it is easy to check that for every $\omega\in\Omega$, $A\in\mathcal{A}$, and $z\in\mathbb{R}^d$
		\begin{align}\label{eq:mu_stationary}
			\mu_\xi(\omega,A+z)=\mu_\xi(\tau_z\omega,A).
		\end{align}
		Indeed, given $v\in \ell_\xi+W_0^{1,1}(A,\R^m)$, the function $\tilde{v}(x):=v(x-z)+\ell_\xi(z)$ belongs to $\ell_\xi +W_0^{1,1}(A+z,\R^m)$ and by (A3) and the translation invariance in $v$ we get
	\begin{equation*}
		\int_{A+z}f(\w,x,\nabla\tilde{v})\dx=\int_{A}f(\w,x+z,\nabla v)\dx=\int_{A}f(\tau_z\w,x,\nabla v)\dx,
	\end{equation*}
	then \eqref{eq:mu_stationary} follows by taking the inf in the above equality.
	
	 We now prove the subadditivity of $\mu_\xi$ as a set function. To this end let $\omega\in\Omega$ and $A\in\mathcal{A}$ be fixed and let $(A_i)_{i\in I}\subset\mathcal{A}$ be a finite family of pairwise disjoint sets such that $A_i\subset A$ for every $i\in I$ and $|A\setminus\cup_{i\in I}A_i|=0$. Let $\eta>0$; for
	every $i\in \mathcal I$ let $v_i\in \ell_\xi+W_0^{1,1}(A_i,\R^m)$ be such that
	\begin{equation}\label{inf-i}
	 \int_{A_i}f(\w, x, \nabla v_i)\dx\leq \mu_{\xi}(\w,A_i)+\eta.
	\end{equation}
	Define $v:=\sum_{i\in \mathcal I} v_i\chi_{A_i}$; clearly $v\in \ell_\xi+W^{1,1}_0(A,\R^m)$, therefore by additivity and locality we get
	\begin{equation*}
		\mu_{\xi}(\w,A)\leq \int_{A}f(\w, x, \nabla v)\dx =\sum_{i\in \mathcal I} \int_{A_i}f(\w, x, \nabla v_i)\dx,
	\end{equation*}
	thus the subadditivity is an immediate consequence of \eqref{inf-i}.    
	
	We are now left to prove the upper bound on $\mathbb E[\mu_\xi(\cdot, A)]$, the lower bound being trivial by the nonnegativity of $f$.

	Let $\omega\in\Omega$ and $A\in\mathcal{A}$ be fixed and arbitrary; choosing $u=\ell_\xi$ as a test function in the definition of $\mu_\xi(\omega,A)$, by \eqref{gc} we readily deduce that
	\begin{equation}\label{eq:pointwisebound}
		\mu_{\xi}(\w,A)\leq \int_A f(\w,x,\xi)\,\dx\leq \int_{A}\big(|\xi \Am(\w,x)|+\lambda(\w,x)\big)\,\mathrm{d}x.
	\end{equation}
	Therefore Tonelli's Theorem gives
	\begin{align}\nonumber
		\mathbb{E}\left[\mu_{\xi}(\cdot,A)\right]&\leq \int_A\mathbb{E}[|\xi \Am(\cdot,x)|]+\mathbb{E}[\lambda(\cdot,x)]\,\mathrm{d}x
		\\\nonumber
		&=\big(\mathbb{E}[|\xi \Am(\cdot,0)|]|+\mathbb{E}[\lambda(\cdot,0)]\big)|A|\nonumber
		\\\label{eq:mu_bound}
		&\leq \Big(\sup_{|\eta|= 1}\mathbb{E}[|\eta \Am(\cdot,0)|]|\xi|+\mathbb{E}[\lambda(\cdot,0)]\Big)|A|,
	\end{align}
	where the equality follows from a change of variables in $\Omega$ and the stationarity of $\Am$ and $\lambda$. 
	Eventually, for every fixed $\xi\in \R^{d\times m}$, $\mu_\xi$ satisfies the desired upper bound, thus is a subadditive process as claimed.
	
	Hence we can appeal to Theorem \ref{thm:ergodic} to deduce the existence of $\Omega_\xi \subset \Omega$ with $\Omega_\xi\in \F$, $\mathbb P(\Omega_\xi)=1$ and of a $\F$-measurable function $\phi_\xi$ such that for every $\omega \in \Omega_\xi$  there holds		
	\begin{equation}\label{lim-xi-fixed}
	\lim_{t\to +\infty} \frac{\mu_\xi(\omega, tQ_\rho(x_0))}{|tQ_\rho(x_0)|}=\lim_{t\to +\infty} \frac{\mu_\xi(\omega, Q_t(0))}{t^d}=:\phi_\xi(\omega), 
	\end{equation}
	for every $x_0 \in \R^d$ and every $\rho>0$. 
	
Now set 	
\[
\Omega' := \bigcap_{\xi \in \mathbb Q^{m\times d}} \Omega_\xi; 	
\]	
clearly $\mathbb P(\Omega')=1$ and in view of \eqref{lim-xi-fixed}, for every $\omega \in \Omega'$ the function $\phi_\xi$ is well defined for $\xi \in \mathbb Q^{m\times d}$. 

We now claim that for every  $\omega \in \Omega'$ the limit in \eqref{lim-xi-fixed} exists for every $\xi \in \R^{m\times d}$. We introduce the auxiliary functions $\phi^+_\rho, \phi^-_\rho \colon \Omega' \times \R^d \times \R^{m\times d} \to [0,+\infty)$ defined as
\begin{equation*}
\phi_\rho^+(\omega, x, \xi):= \limsup_{t \to +\infty}\frac{\mu_\xi(\omega, Q_{\rho t}(tx))}{\rho^d t^d}, \qquad \phi_\rho^-(\omega, x, \xi):= \liminf_{t \to +\infty}\frac{\mu_\xi(\omega, Q_{\rho t}(tx))}{\rho^d t^d}. 
\end{equation*}
By definition we have that 
\begin{equation}\label{c:first}
\phi^+_{\rho}(\omega, x, \xi)=\phi^-_\rho(\omega, x, \xi)=\phi_\xi(\omega), 
\end{equation}
for every $\omega \in \Omega'$, $x\in \R^d$,  $\xi \in \mathbb Q^{m\times d}$, and $\rho>0$. 

Let $\delta\in (0,1)$ be fixed; we then have 
\[
Q_{\rho(1-\delta)t}(tx) \subset \subset  Q_{\rho t}(tx) \subset \subset  Q_{\rho (1+\delta)t}(tx). 
\]
Moreover let $\xi\in\R^{m\times d}$ and $(\xi_j) \subset \Q^{m \times d}$ be such that $\xi_j \to \xi$, as $j \to +\infty$.
Consider $v\in\ell_\xi+W^{1,1}_0(Q_{\rho t}(tx),\R^m)$ arbitrary and extend it to $\R^d$ by setting $v=\ell_\xi$ in $\R^d\setminus Q_{\rho t}(tx)$. 
Let $\varphi\in C_c^{\infty}(\R^d,[0,1])$ be a cut-off function such that 
	\begin{equation*}
		\varphi\equiv 1\,\text{ on }\,Q_{\rho t}(tx),\quad\quad\varphi\equiv 0\,\text{ on }\,\R^d\setminus Q_{\rho(1+\delta) t}(tx),\quad\quad\|\nabla\varphi\|_{L^{\infty}(\R^d)}\leq \frac{2}{\rho \delta t}.
	\end{equation*}	
Define $\tilde{v}:=\varphi v+(1-\varphi)\ell_{\xi_j}$; clearly, $\tilde{v}\in \ell_{\xi_j}+W_0^{1,1}(Q_{\rho (1+\delta)t}(tx),\R^m)$.

	By definition of $\tilde v$ and \eqref{gc}, setting $\kappa:=|\Am| +\lambda$ we have 
	\begin{align*}
		\mu_{\xi_j}(\w, Q_{\rho (1+\delta)t}(tx))&\leq \int_{Q_{\rho (1+\delta)t}(tx)}f(\w,y,\nabla \tilde v)\dy
		\\
		&\leq\int_{Q_{\rho t}(tx)}f(\w,y,\nabla v)\dy+\int_{Q_{\rho (1+\delta)t}(tx) \setminus Q_{\rho t}(tx)}\hspace{-0.7cm}\big(|\nabla\tilde{v}||\Am(\w,y)|+\lambda(\w,y)\big)\dy
		\\
		&\leq \int_{Q_{\rho t}(tx)}f(\w,x,\nabla v)\dx+C\int_{Q_{\rho (1+\delta)t}(tx) \setminus Q_{\rho t}(tx)}\hspace{-1cm}\kappa(\w,y)(|\nabla\varphi||\xi -\xi_j| |y|+|\xi|+|\xi_j|+1)\dy
		\\
		&\leq \int_{Q_{\rho t}(tx)}f(\w,y,\nabla v)\dy+C\int_{Q_{\rho (1+\delta)t}(tx) \setminus Q_{\rho t}(tx)}\hspace{-1cm}\kappa(\w,y)\Big(|\xi-\xi_j|\frac{|y|}{\rho \delta t}+|\xi|+|\xi_j|+1\Big)\dy.
	\end{align*}
	Since $\delta< 1$,  we have that $|y| \leq \sqrt d (\rho + |x|)t$ in $Q_{\rho (1+\delta)t}(tx)$.  Then, we can pass to the inf over $v$ to deduce that
	\begin{equation*}
		\mu_{\xi_j}(\w,Q_{\rho (1+\delta)t}(tx))\leq  \mu_{\xi}(\w,Q_{\rho t}(tx ))+C\bigg(\frac{\rho +|x|}{\rho\delta}|\xi-\xi_j|+|\xi|+|\xi_j|+1\bigg)\int_{Q_{\rho (1+\delta)t}(tx) \setminus Q_{\rho t}(tx)}\hspace{-0.5cm}\kappa(\w,y)\dy.
	\end{equation*}
Then, dividing both sides of the expression above by $(\rho t)^d$, passing to the liminf as $t \to +\infty$, and invoking Theorem \ref{thm.additiv_ergodic} we get 	
\begin{equation}\label{c:uno}
(1+\delta)^d\phi^-_\rho\Big(\omega, \frac{x}{1+\delta},\xi_j\Big) \leq \phi^-_\rho(\omega, x,\xi)+ C\bigg(\frac{\rho +|x|}{\rho \delta}|\xi-\xi_j|+|\xi|+|\xi_j|+1\bigg) \left({(1+\delta)^d}-1\right)\mathbb{E}[\kappa(\cdot,0)]
\end{equation}	
(we notice here that in principle Theorem \ref{thm.additiv_ergodic} holds in a set of probability one, say $\Omega''$, which can be smaller than $\Omega'$.  However since clearly $\mathbb P(\Omega'\cap \Omega'')=1$ with a little abuse of notation we still denote with $\Omega'$ this intersection). 
Analogously we can prove that 
\begin{equation}\label{c:due}
\phi^+_\rho(\omega, x,\xi) \leq (1-\delta)^d\phi^+_\rho\Big(\omega, \frac{x}{1-\delta},\xi_j\Big) +C\bigg(\frac{\rho +|x|}{\rho\delta}|\xi-\xi_j|+|\xi|+|\xi_j|+1\bigg) \left({1-(1-\delta)^d}\right)\mathbb{E}[\kappa(\cdot,0)].
\end{equation}
Hence since $(\xi_j) \subset \mathbb Q^{m\times d}$, thanks to \eqref{c:first} we have 
\begin{equation}\label{c:tre}
\phi^-_\rho\Big(\omega, \frac{x}{1+\delta},\xi_j\Big)= \phi^+_\rho\Big(\omega, \frac{x}{1-\delta},\xi_j\Big)=\phi_{\xi_j}(\omega),
\end{equation}
for every $\omega \in \Omega'$, $x\in \R^d$, $j\in \mathbb N$, and $\rho>0$. 
Therefore, gathering \eqref{c:uno}-\eqref{c:tre}, passing to the liminf as $j \to +\infty$ and as $\delta \to 0^+$ we get 
\[
\liminf_{j \to +\infty} \phi_{\xi_j}(\omega) \leq \phi^+_\rho(\omega, x,\xi) \leq \phi^-_\rho(\omega, x,\xi)  \leq \liminf_{j \to +\infty} \phi_{\xi_j}(\omega),
\]
hence
\[
\phi^+_\rho(\omega, x,\xi) = \phi^-_\rho(\omega, x,\xi) =\liminf_{j \to +\infty} \phi_{\xi_j}(\omega),
\]
for every $\omega \in \Omega'$, $x\in \R^d$, $\xi \in \R^{m\times d}$, and $\rho>0$. Note that in particular all the terms in the above equalities do not depend on $x$ and $\rho$. Then, by the definition of $\phi^\pm_\rho$ we have that for every $\omega \in \Omega'$ and every $\xi \in \R^{m\times d}$ the following limit exists and does not depend on $x$ and on $\rho>0$
\[
\lim_{t\to +\infty} \frac{\mu_\xi(\omega, tQ_\rho(x))}{|tQ_\rho(x)|}.
\]
We then set
\begin{equation}\label{c:def}
\phi_\xi(\omega):=\phi^+_\rho(\omega, x,\xi) = \phi^-_\rho(\omega, x,\xi)= \lim_{t\to +\infty} \frac{\mu_\xi(\omega, tQ_\rho(x))}{|tQ_\rho(x)|}=\lim_{t\to +\infty} \frac{\mu_\xi(\omega, Q_t(0))}{t^d}. 	
\end{equation}
Then, for every $\xi\in \R^{m\times d}$ the function $\omega \mapsto \phi_\xi(\w)$ is $\F$-measurable on $\Omega'$ while by \eqref{c:uno}, \eqref{c:due}, and \eqref{c:def} the function $\xi \mapsto \phi_\xi(\w)$ is continuous for every $\omega\in \Omega'$. 
	
We now prove that $\phi_\xi$ is actually deterministic. By the ergodicity of $\{\tau_z\}_{z\in \R^d}$, this is equivalent to proving that 
\begin{equation}\label{c:invariance}
\phi_\xi(\omega)=\phi_\xi(\tau_z\omega),
\end{equation}
for every $\xi \in \R^{m\times d}$ and $z\in \R^d$ (cf. \cite[Corollary 6.3]{CDMSZ19}). 	
	Let $\omega\in \Omega$, $\xi\in \R^{m\times d}$, and $z\in \R^d$ be fixed. By stationarity we have 
	\begin{equation}\label{c:stat}
	\mu_{\xi}(\tau_z\w,tQ)=\mu_{\xi}(\w,t(Q+z/t)),
	\end{equation}
	for every $t>0$, where $Q:=Q_{\rho}(x_0)$. Given $Q'$ and $Q''$ open cubes with $Q'\subset\subset Q\subset\subset Q''$, for $t>0$ large enough we have 
	\begin{equation*}
		Q' \subset Q+z/t \subset Q''.
	\end{equation*}
	By the subadditivity of $\mu_{\xi}$ and \eqref{eq:pointwisebound} we get
	\begin{align*}
		\frac{\mu_{\xi}(\w,t(Q+z/t))}{|tQ|}&\leq \frac{\mu_{\xi}(\w,tQ')}{|tQ'|}+\frac{1}{|tQ|}\int_{t(Q+z/t)\setminus tQ'}\big(|\xi \Am(\w,x)|+\lambda(\w,x)\big)\,\mathrm{d}x
		\\
		&\leq \frac{\mu_{\xi}(\w,tQ')}{|tQ'|}+\frac{(|\xi|+1)}{|tQ|}\int_{tQ''\setminus tQ'}\kappa(\w,x)\dx.
	\end{align*}
	In view of \eqref{c:stat}, passing to the limsup as $t\to +\infty$, and invoking Theorem \ref{thm.additiv_ergodic} we infer that
	\begin{equation*}
		\limsup_{t\to +\infty}\frac{\mu_{\xi}(\tau_z\w,tQ)}{|tQ|}\leq \phi_\xi(\w)+(|\xi|+1)\mathbb{E}[\kappa(\cdot,0)]\frac{|Q''|-|Q'|}{|Q|}.	
	\end{equation*}
	Then, letting $Q'\nearrow Q$ and $Q''\searrow Q$ gives
	\begin{equation*}
		\limsup_{t\to +\infty}\frac{\mu_{\xi}(\tau_z\w,tQ)}{|tQ|}\leq \phi_{\xi}(\w).	
	\end{equation*}
 Analogously it can be proven that  
	\begin{equation*}
		\phi_{\xi}(\w) \leq \liminf_{t\to +\infty}\frac{\mu_{\xi}(\tau_z\w,tQ)}{|tQ|}.	
	\end{equation*}
which allows us to conclude both that $\tau_z\w \in \Omega'$ and that \eqref{c:invariance} holds true, so that $\phi_\xi$ is deterministic as claimed. Eventually, we define $f_{\rm hom}(\xi):=\phi_\xi$, for every $\xi\in \R^{m \times d}$.
	
We now show that $f_{\rm hom}$ satisfies the desired bounds. 	
	
Thanks to the Fatou Lemma we have that  
	\begin{equation*}
		f_{\rm hom}(\xi)=\mathbb{E}[f_{\rm hom}(\xi)]\leq \liminf_{t\to +\infty}\frac{1}{|tQ|}\mathbb{E}[\mu_{\xi}(\cdot,tQ)]\leq \sup_{|\eta|=1}\mathbb{E}[|\eta \Am(\cdot,0)|]\,|\xi|+\mathbb{E}[\lambda(\cdot,0)],
	\end{equation*}
	where to establish the last inequality we have used \eqref{eq:mu_bound}. Hence, the upper bound on $f_{\rm hom}$ is achieved. 
	
	To prove the lower bound, we observe that for any $v\in \ell_\xi +W_0^{1,1}(Q,\R^m)$ we have
		\begin{equation*}
		|tQ| |\xi|=\left|\int_{tQ} \nabla v\dx\right|\leq \int_{tQ} |\nabla v|\dx\leq \frac{\||\Am(\cdot,0)^{-1}|\|_{L^{\infty}(\Omega)}}{\alpha}\int_{tQ} f(\omega, x,\nabla v)\dx,
	\end{equation*}
where in the last estimate we used \eqref{eq:linear_from_below} with the actual constant $C=\||\Am(\cdot,0)^{-1}|\|_{L^{\infty}(\Omega)}$. Therefore by the arbitrariness of $v$, dividing by $|tQ|$ and passing to the limit as $t \to +\infty$ we get
	\begin{equation*}
		\alpha \||\Am(\cdot,0)^{-1}|\|_{L^{\infty}(\Omega)}^{-1}|\xi|\leq f_{\rm hom}(\xi),
	\end{equation*}
	thus the desired lower bound.
	
	Eventually, \eqref{c:det} follows by \cite[Theorem 2.3]{Kr}, hence the proof is accomplished.   
\end{proof}

\section{$\Gamma$-convergence}\label{s.proofs}
In this section we prove the $\Gamma$-convergence statement in Theorem \ref{thm.Gamma_pure}.
To do so, we start by establishing a compactness result for sequences $(u_\e)$ with equi-bounded energy $F_\e$. 

\begin{lemma}[Domain of the $\Gamma$-limit]\label{l.compactness}
Let $A\in \A$ and let $(u_{\e})\subset L^1_{\rm loc}(\R^d,\R^m)$ be such that
\begin{equation*}
\sup_{\e>0}\big(\|u_\e\|_{L^1(A,\R^m)}+ F_{\e}(\w)(u_{\e},A)\big)<+\infty.
\end{equation*}
Then there exists $u\in BV(A,\R^m)$ such that, up to subsequences, $u_{\e}\overset{*}{\rightharpoonup} u$ in $BV(A,\R^m)$. 
\end{lemma}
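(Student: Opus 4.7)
The plan is to show that the sequence $(u_\e)$ is in fact uniformly bounded in $W^{1,1}(A,\R^m)$, whence the compactness of the embedding $BV(A,\R^m) \hookrightarrow L^1(A,\R^m)$ for Lipschitz domains $A$ will give the claim. The key observation is that the finite-energy hypothesis forces $u_\e \in W^{1,1}(A,\R^m)$ by the definition of $F_\e$ in \eqref{F-e}, so it only remains to control the $L^1$-norm of the gradients. The implicit assumption is that $\omega$ lies in the full-probability set on which the pointwise bound from Remark \ref{r.on_assump}(c) holds.

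First I would invoke the lower bound in (A2), which gives
\begin{equation*}
\alpha \int_A |\nabla u_\e(x)\, \Lambda(\omega, \tfrac{x}{\e})|\dx \leq F_\e(\omega)(u_\e,A).
\end{equation*}
Then, by Remark \ref{r.on_assump}(c), there exists a deterministic constant $C>0$ such that $|\Lambda(\omega,y)^{-1}| \leq C$ for a.e. $y \in \R^d$. Since the map $x \mapsto x/\e$ preserves Lebesgue-null sets, the same bound holds for a.e. $x \in A$ with $y = x/\e$, yielding the pointwise inequality
\begin{equation*}
|\nabla u_\e(x)| \leq |\nabla u_\e(x)\, \Lambda(\omega,\tfrac{x}{\e})|\,|\Lambda(\omega,\tfrac{x}{\e})^{-1}| \leq C\,|\nabla u_\e(x)\, \Lambda(\omega,\tfrac{x}{\e})|
\end{equation*}
for a.e. $x \in A$. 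Integrating and combining the two displays above gives
\begin{equation*}
\int_A |\nabla u_\e|\dx \leq \frac{C}{\alpha} F_\e(\omega)(u_\e,A),
\end{equation*}
which is uniformly bounded by assumption.

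Together with the uniform bound on $\|u_\e\|_{L^1(A,\R^m)}$, this shows that $(u_\e)$ is bounded in $W^{1,1}(A,\R^m)$, and hence in $BV(A,\R^m)$. Since $A \in \A$ has Lipschitz boundary, the standard $BV$-compactness result yields a subsequence (not relabelled) and some $u \in BV(A,\R^m)$ with $u_\e \to u$ in $L^1(A,\R^m)$ and $\D u_\e \overset{\star}{\rightharpoonup} \D u$ in the sense of measures, i.e. $u_\e \overset{\star}{\rightharpoonup} u$ in $BV(A,\R^m)$, as required. There is no real obstacle here: the only point that deserves care is checking that the $L^\infty$-bound on $|\Lambda(\omega,\cdot)^{-1}|$ is insensitive to the microscopic rescaling $x \mapsto x/\e$, which is immediate since the bound holds uniformly on $\R^d$ for $\omega$ in a set of full probability.
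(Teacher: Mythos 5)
Your proof is correct and follows essentially the same route as the paper: the paper's argument is precisely to invoke the pointwise bound \eqref{eq:linear_from_below} from Remark \ref{r.on_assump}(c) to obtain equi-boundedness in $W^{1,1}(A,\R^m)$ and then conclude by standard $BV$-compactness. Your more detailed derivation of \eqref{eq:linear_from_below} from the almost sure uniform bound on $|\Lambda(\omega,\cdot)^{-1}|$ is exactly what the paper relegates to that remark.
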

\begin{proof}
In view of \eqref{eq:linear_from_below} the sequence $(u_\e)$ is equi-bounded in the $W^{1,1}(A,\R^m)$-norm. Therefore the claim
follows by well-known compactness properties of $BV$-functions. 
\end{proof}

Below we prove two technical results which will be needed in what follows. The first one is a classical vectorial truncation result. 

\begin{lemma}[Vectorial truncation]\label{l.truncation}
	Let $A\in \A$ and let $u_{\e}\in L^1_{\rm loc}(\R^d,\R^m)\cap W^{1,1}(A,\R^m)$ be such that $u_{\e}\to u$ in $L^1_{\rm loc}(\R^d,\R^m)$, as $\e\to 0^+$. Then for every $\eta>0$ there exists a constant $C_{\eta}>0$ and a function $u_{\e,\eta}\in L^1_{\rm loc}(\R^d,\R^m)\cap W^{1,1}(A,\R^m)$ such that $u_{\e,\eta}=u_{\e}$ a.e. in $\{|u_{\e}|\leq \eta^{-1}\}$, and 
	\begin{equation*}
		|u_{\e,\eta}|\leq |u_{\e}|,\quad\quad |u_{\e,\eta}|\leq C_{\eta} \quad \text{for a.e. $x\in \R^d$.}
	\end{equation*}
	Moreover, there holds 
	\begin{equation*}
		F_{\e}(\w)(u_{\e,\eta},A)\leq (1+\eta)F_{\e}(\w)(u_{\e},A)+\eta,
	\end{equation*}
for every $\e>0$ small enough. 	
\end{lemma}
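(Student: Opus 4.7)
The plan is to define $u_{\e,\eta}$ by a radial truncation of $u_\e$ in the target: set $T_k(v) := v\,\min\{1, k/|v|\}$ (with $T_k(0) := 0$). The map $T_k$ is $1$-Lipschitz with $|T_k(v)| \leq \min\{|v|, k\}$, and a direct computation gives, on $\{|u_\e| > k\}$, the pointwise operator-norm estimate $|\nabla(T_k \circ u_\e)| \leq (k/|u_\e|)|\nabla u_\e|$, while $\nabla(T_k \circ u_\e) = \nabla u_\e$ on $\{|u_\e| \leq k\}$. The truncation level will be selected among a finite geometric family $k_j := 2^j K$, $j = 0, \ldots, N$, with $K \geq \eta^{-1}$ and $N$ large depending on $\eta$, via an averaging/pigeonhole argument; I then set $u_{\e,\eta} := T_{k_{j^\ast}}(u_\e)$ for the chosen index $j^\ast$.

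I may assume $\sup_\e F_\e(\w)(u_\e, A) < +\infty$, for otherwise the claim is trivial. By \eqref{eq:linear_from_below} this gives $\int_A |\nabla u_\e|\dx \leq C$ uniformly in $\e$, and Sobolev embedding yields a uniform bound on $(u_\e)$ in $L^{d/(d-1)}(A,\R^m)$; in particular $|\{|u_\e| > K\}| \leq C K^{-d/(d-1)}$ uniformly in $\e$. Moreover, applying Theorem \ref{thm.additiv_ergodic} to $\lambda(\cdot, 0) \in L^1(\Omega)$ shows that, for $\w$ in a set of full probability, $\lambda(\w, \cdot/\e) \rightharpoonup \mathbb{E}[\lambda]$ weakly in $L^1_{\rm loc}(\R^d)$, and hence the family $\{\lambda(\w, \cdot/\e)\}$ is equi-integrable on $A$ for $\e$ small enough. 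Combining the upper bound in (A2) with the operator-norm estimate above, for any $k > 0$ I obtain
\[
F_\e(\w)(T_k(u_\e), A) - F_\e(\w)(u_\e, A) \leq \int_{A\cap\{|u_\e|>k\}}\Bigl(\tfrac{k}{|u_\e|}\,|\nabla u_\e\,\Am(\w,\tfrac{x}{\e})| + \lambda(\w,\tfrac{x}{\e})\Bigr)\dx.
\]

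The geometric spacing of $(k_j)$ guarantees $\sum_{j\geq 0\,:\,k_j < r} k_j/r \leq 2$ for every $r > 0$, so summing the previous estimate over $j$ and invoking the lower bound in (A2) yields
\[
\sum_{j=0}^N \int_{\{|u_\e|>k_j\}} \tfrac{k_j}{|u_\e|}\,|\nabla u_\e\,\Am(\w,\tfrac{x}{\e})|\dx \leq 2\int_A |\nabla u_\e\,\Am(\w,\tfrac{x}{\e})|\dx \leq 2\alpha^{-1} F_\e(\w)(u_\e, A).
\]
Choosing $N \geq 4\alpha^{-1}/\eta$, the pigeonhole principle produces $j^\ast = j^\ast(\e) \in \{0,\ldots,N\}$ for which the gradient contribution is at most $(\eta/2) F_\e(\w)(u_\e, A)$. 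For the $\lambda$-term, I pick $K$ large enough (depending only on $\eta$) so that $|\{|u_\e|>K\}|$ lies below the equi-integrability modulus of $\lambda(\w, \cdot/\e)$ for level $\eta/2$; this yields $\int_{\{|u_\e|>k_{j^\ast}\}}\lambda(\w, x/\e)\dx \leq \eta/2$ for every sufficiently small $\e$. Setting $u_{\e,\eta} := T_{k_{j^\ast}}(u_\e)$ and $C_\eta := 2^N K$, all the stated properties follow at once.

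The main obstacle is the gradient tail: a single radial truncation at a fixed level bounds $F_\e(T_k(u_\e)) - F_\e(u_\e)$ only by $\int_A |\nabla u_\e\,\Am|\dx \leq \alpha^{-1} F_\e(\w)(u_\e, A)$, which is $O(1)$ and not $o(1)$. The dyadic family of levels, together with the identity $\sum_{j\,:\,k_j<r} k_j/r \leq 2$, converts this into a factor $O(1/N)$ after pigeonhole and is precisely what delivers the multiplicative error $(1+\eta)$ of the statement. The additive error $\eta$ comes from the $\lambda$ contribution and depends essentially on the equi-integrability of $\lambda(\w, \cdot/\e)$ provided by the Additive Ergodic Theorem.
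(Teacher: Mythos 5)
Your argument is correct in substance and follows the same strategy as \cite[Lemma 4.6]{RR21}, to which the paper delegates this proof: a radial truncation $T_k$ at one of finitely many geometrically spaced levels, selected by averaging/pigeonhole. The two key observations are indeed the right ones: since $T_k$ acts on the $\R^m$ index while $\Lambda$ multiplies on the $\R^d$ index, one has $|\nabla(T_k u_\e)\,\Lambda|\le \tfrac{k}{|u_\e|}|\nabla u_\e\,\Lambda|$ with the \emph{weighted} norm on both sides, and the identity $\sum_{j:\,k_j<r}k_j/r\le 2$ turns the $O(1)$ single-level tail into an $O(1/N)$ average, which is what produces the multiplicative error $(1+\eta)$. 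One inaccuracy: the opening reduction to $\sup_\e F_\e(\w)(u_\e,A)<+\infty$ is not justified --- if the individual energies are finite but unbounded in $\e$, the claim is not trivial --- and you then rely on this uniform bound (via $\int_A|\nabla u_\e|\dx\le C$ and Sobolev embedding) to control $|\{|u_\e|>K\}|$ uniformly in $\e$, which is needed to choose $K$, and hence $C_\eta$, independently of $\e$. The repair is immediate and makes the reduction unnecessary: since $A$ is bounded and $u_\e\to u$ in $L^1_{\rm loc}(\R^d,\R^m)$, the norms $\|u_\e\|_{L^1(A,\R^m)}$ are bounded, so Chebyshev's inequality already gives $|\{|u_\e|>K\}|\le C/K$ uniformly in $\e$ with no energy bound at all, while the pigeonhole selection of $j^\ast$ is carried out for each $\e$ separately and never uses uniformity. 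With that adjustment (and the standard chain-rule facts for Lipschitz truncations of $W^{1,1}$ maps, which you implicitly use), the proof is complete.
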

\begin{proof}
The proof is identical to that of \cite[Lemma 4.6]{RR21} up to replacing the exponent $p$ with $1$ and noting that the energy bound in  \cite[Lemma 4.6]{RR21} can be obtained simultaneously for all open sets $A\in \A$.
\end{proof}
Next we show that the functionals $F_\e$ satisfy a so-called fundamental estimate, uniformly in $\e$.
\begin{lemma}[Fundamental estimate]\label{l.fundamental_estimate}
Let $F_\e$ be as in \eqref{F-e}. Let $A, A', A'', B \in \A$ with $A'\subset \subset A''\subset A$ and let $u,v \in W^{1,1}(A,\R^m)$. For every $\delta>0$ there exists $\varphi \in C^\infty_c(A'',[0,1])$ (depending on $\delta$, $A'$ and $A''$) with $\varphi=1$ in a neighbourhood of $\overline{A'}$ such that for every $\e>0$ there holds
\begin{align}\nonumber
F_{\e}(\w)(\varphi u + (1-\varphi)v,A'\cup B)
&\leq (1+\delta)\Big(F_{\e}(\w)(u,A'')+F_{\e}(\w)(v,B)\Big)
\\ \label{c:fund-est}
&+\frac{4}{{\rm dist}(A', \partial A'')} \int_{S}|u-v||\Am(\w,\tfrac{x}{\e})|dx+\delta \int_S\lambda(\w,\tfrac{x}{\e})\dx,
\end{align}
where $S:=(A''\setminus \overline{A'})\cap B$. 
\end{lemma}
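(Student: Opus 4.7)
The plan is to adapt the classical De Giorgi slicing/averaging argument to the degenerate linear-growth setting. Fix $\delta>0$ and pick an integer $N=N(\delta,\alpha)$ with $N\geq\max\{1/\delta,\,1/(\alpha\delta)\}$. I would first build a finite chain of nested Lipschitz sets
\[
A'=A_0\subset\subset A_1\subset\subset\cdots\subset\subset A_N=A'',
\]
with $\dist(A_{i-1},\partial A_i)\geq d_0:=\dist(A',\partial A'')/(2N)$, together with cut-offs $\varphi_i\in C_c^\infty(A_i,[0,1])$ satisfying $\varphi_i\equiv 1$ on a neighbourhood of $\overline{A_{i-1}}$ and $\|\nabla\varphi_i\|_\infty\leq 2/d_0=4N/\dist(A',\partial A'')$. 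The strips $S_i:=A_i\setminus\overline{A_{i-1}}$ are pairwise disjoint subsets of $A''\setminus\overline{A'}$, and the sets $S_i\cap B$ are pairwise disjoint subsets of $S$.

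Next, setting $w_i:=\varphi_i u+(1-\varphi_i)v$, the fact that $w_i=u$ on $A'\subset A_{i-1}$ and $w_i=v$ outside $A_i$, together with the nonnegativity of $f$, gives the decomposition
\[
F_\e(\w)(w_i,A'\cup B)\leq F_\e(\w)(u,A'')+F_\e(\w)(v,B)+F_\e(\w)(w_i,S_i\cap B).
\]
On the strip I would apply the upper bound in (A2) and the triangle inequality to the identity $\nabla w_i=\varphi_i\nabla u+(1-\varphi_i)\nabla v+\nabla\varphi_i\otimes(u-v)$, yielding the pointwise estimate
\[
f(\w,\tfrac{x}{\e},\nabla w_i)\leq \varphi_i\,|\nabla u\,\Am(\w,\tfrac{x}{\e})|+(1-\varphi_i)\,|\nabla v\,\Am(\w,\tfrac{x}{\e})|+|\nabla\varphi_i|\,|u-v|\,|\Am(\w,\tfrac{x}{\e})|+\lambda(\w,\tfrac{x}{\e}).
\]
Summing over $i=1,\ldots,N$ and exploiting disjointness of the $S_i$ (so that $\sum_i\varphi_i\mathbf 1_{S_i}\leq 1$ and $\sum_i(1-\varphi_i)\mathbf 1_{S_i}\leq 1$), together with the lower bound $\alpha|\xi\,\Am|\leq f$ from (A2) to convert the weighted gradients into energies, gives
\[
\sum_{i=1}^N F_\e(\w)(w_i,S_i\cap B)\leq \tfrac{1}{\alpha}\bigl(F_\e(\w)(u,A'')+F_\e(\w)(v,B)\bigr)+\tfrac{4N}{\dist(A',\partial A'')}\int_S|u-v|\,|\Am(\w,\tfrac{x}{\e})|\dx+\int_S\lambda(\w,\tfrac{x}{\e})\dx.
\]

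The conclusion follows by pigeon-holing: there exists $i^\ast\in\{1,\ldots,N\}$ for which the left-hand side, divided by $N$, is attained, so that
\[
F_\e(\w)(w_{i^\ast},A'\cup B)\leq \Bigl(1+\tfrac{1}{\alpha N}\Bigr)\bigl(F_\e(\w)(u,A'')+F_\e(\w)(v,B)\bigr)+\tfrac{4}{\dist(A',\partial A'')}\int_S|u-v|\,|\Am(\w,\tfrac{x}{\e})|\dx+\tfrac{1}{N}\int_S\lambda(\w,\tfrac{x}{\e})\dx,
\]
which is \eqref{c:fund-est} once one sets $\varphi:=\varphi_{i^\ast}$ and uses $N\geq\max\{1/\delta,1/(\alpha\delta)\}$. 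The main technical obstacle is that the averaging step makes $i^\ast$, and hence the chosen cut-off, depend a priori on $u,v,\w,\e$; I read the clause ``$\varphi$ depending on $\delta,A',A''$'' in the statement as referring to the explicit parameters used to pre-fix the finite family $\{\varphi_i\}_{i=1}^N$, the selection of $i^\ast$ being a combinatorial choice within this family. A second delicate point, genuinely tied to the degenerate setting, is that the lower bound in (A2) only controls the weighted quantity $|\nabla u\,\Am|$ rather than $|\nabla u|$: this forces the averaging trick to operate on weighted gradients and produces the factor $1/\alpha$, which is precisely why $N$ must be chosen in terms of $\alpha$ as well as $\delta$.
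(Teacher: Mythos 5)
Your proof is correct and follows essentially the same De Giorgi slicing-and-averaging argument as the paper, with the same choice of $N$, the same layer width $\dist(A',\partial A'')/(2N)$ and gradient bound $4N/\dist(A',\partial A'')$, the same use of the upper and lower bounds in (A2) to convert the weighted strip contributions into the factor $1/(\alpha N)$, and the same pigeonhole selection of $i^\ast$. The quantifier subtlety you flag (the selected index depending a priori on $\e$, $u$, $v$, $\w$) is present in the paper's own proof in exactly the same form, so your reading of the statement is consistent with how the lemma is proved and used there.
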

\begin{proof}
Let $\delta>0$ and let $A,A',A'',B$ and $S$ be as in the statement. Let $N=N(\delta) \in \mathbb N$ be such that
\begin{equation}\label{c:N}
\frac{1}{N} \max\Big\{\frac{1}{\a},1\Big\} \leq \delta. 
\end{equation}
Set $R:=\frac{1}{2}{\rm dist}(A',\partial A'')>0$. For $i=0,\ldots,N$ define
\[
A_i:=\Big\{x\in A'' \colon {\rm dist}(x,A')< \frac{i}{N} R\Big\}, 
\]
we have
\[
A'=:A_0\subset \subset A_1\subset \subset \ldots  \subset \subset A_{N} \subset \subset A''. 
\]
and for $i=0,\ldots,N-1$ let $\varphi_i\in C^\infty_c(A,[0,1])$ be such that ${\rm supp}\, \varphi_i \subset A_{i+1}$ and $\varphi_i=1$ in a neighbourhood of $\overline{A_i}$. We notice that $\varphi$ can be chosen in a way such that  $\|\nabla\varphi_i\|_{\infty} \leq 2N/R$.  

By virtue of the nonnegativity of $f$, for every $\e>0$ and for $i=0,\dots,N-1$ we have 
\begin{align}\nonumber
& F_{\e}(\w)(\varphi_i u +(1-\varphi_i)v,A'\cup B)
\\\nonumber
& =F^*_{\e}(\w)(u,(A''\cup B)\cap \overline{A_i})+F^*_{\e}(\w)(v, B\setminus A_{i+1})+F_{\e}(\w)(\varphi_i u +(1-\varphi_i)v,(A_{i+1}\setminus \overline{A_i}) \cap B)\nonumber
\\
&\leq F_{\e}(\w)(u,A'')+ +F_{\e}(\w)(v,B)+F_{\e}(\w)(\varphi_i u +(1-\varphi_i)v,S_i),\label{eq:split}
\end{align}
where $F^*_\e$ denotes the extension of $F_\e$ to the Borel subsets of $\R^d$ and $S_i:=(A_{i+1}\setminus \overline{A_i})\cap B$.

We now estimate the last term in \eqref{eq:split}. Since
\[
\nabla \big(\varphi_i u +(1-\varphi_i)v\big) =\nabla\varphi_i\otimes (u-v)+\varphi_i\nabla u+(1-\varphi_i)\nabla v,  
\]
by the upper bound in \eqref{gc} we get 
\begin{align*}
&\quad F_{\e}(\w)(\varphi_i u +(1-\varphi_i)v,S_i)\leq\int_{S_i}\Big(|\nabla \big(\varphi_i u +(1-\varphi_i)v\big) \Am(\w,\tfrac{x}{\e})|+\lambda(\w,\tfrac{x}{\e})\Big)\dx
\\
&\leq \int_{S_i}\Big(\frac{2N}{R} |u-v||\Lambda(\w,\tfrac{x}{\e})|+|\nabla u \Am(\w,\tfrac{x}{\e})|+|\nabla v \Am(\w,\tfrac{x}{\e})|+\lambda(\w,\tfrac{x}{\e})\Big)\dx.
\end{align*}
Now appealing to the lower bound in \eqref{gc} we obtain  
\begin{equation}\label{eq:intermediate_step}
F_{\e}(\w)(\varphi_i u +(1-\varphi_i)v,S_i)\leq \frac{1}{\alpha} \Big(F_{\e}(\w)(u,S_i)+F_{\e}(\w)(v,S_i)\Big)
 + \int_{S_i}\Big(\frac{2N}{R} |u-v||\Am(\w,\tfrac{x}{\e})|+\lambda(\w,\tfrac{x}{\e})\Big)\dx,
\end{equation}
for every $\e>0$ and for every $i=0,\ldots,N-1$. Hence by \eqref{eq:intermediate_step} there exists $i^\star \in \{0,\ldots,N-1\}$ such that 
\begin{align*}
&F_{\e}(\w)(\varphi_{i^\star} u +(1-\varphi_{i^\star})v,S_{i^\star})\leq \frac{1}{N}\sum_{i=0}^{N-1} F_{\e}(\w)(\varphi_{i} u +(1-\varphi_{i})v,S_{i})
\\
&\leq \frac{1}{N\a}\Big(F_{\e}(\w)(u,S)+F_{\e}(\w)(v,S)\Big) +\frac{2}{R} \int_S|u-v||\Am(\w,\tfrac{x}{\e})|\dx+\frac{1}{N}\int_S \lambda(\w,\tfrac{x}{\e})\dx.
\end{align*} 
Finally, thanks to \eqref{c:N}, the definition of $R$, and \eqref{eq:split} the desired estimate follows by choosing $\varphi=\varphi_{i^\star}$. 
\end{proof}
\begin{remark}\label{rmk:sf}
Let $A,A',A'', B \in \mathcal A$ be as in the statement of Lemma \ref{l.fundamental_estimate}. Let $(u_{\e}),(v_\e) \subset W^{1,1}(A,\R^m)$ be such that $(u_{\e},v_{\e})\to (u,v)$ in $L^1(A,\R^m) \times L^1(A,\R^m)$. Assume moreover that $(u_{\e}), (v_{\e})$ are uniformly bounded in the $L^{\infty}(A,\R^m)$-norm.  Then, there exists a set $\Omega'' \in \F$ with $\mathbb P(\Omega'')=1$ such that for every $\omega \in \Omega''$ and for every $\delta>0$ there exists a sequence $(w_{\e,\delta}) \subset W^{1,1}(A,\R^m)$ with $w_{\e,\delta}=u_\e$ in $A'$ and $w_{\e,\delta}=v_\e$ on $\partial A''$ such that 
\begin{align}\nonumber
\liminf_{\e\to 0^+}F_{\e}(\w)(w_{\e,\delta},A'\cup B)&\leq (1+\delta)\liminf_{\e\to 0^+} \Big(F_{\e}(\w)(u_{\e},A'')+F_{\e}(\w)(v_{\e},B)\Big)
\\\label{c:rmk-sf}
&\quad+\frac{4 \mathbb{E}[|\Am(\cdot,0)|]}{\dist(A',\partial A'')}\int_{(A''\setminus \overline{A'})\cap B}|u-v|\dx + \delta \mathbb{E}[\lambda(\cdot,0)]| |(A''\setminus {A'})\cap B|,
\end{align}
Moreover, the same estimate holds true if we replace $\liminf$ by $\limsup$.

In fact, up to a subsequence,  $|u_{\e}-v_{\e}|$ converges a.e. to $|u-v|$ and is uniformly bounded in the $L^{\infty}(A,\R^m)$-norm. Moreover, due to Theorem \ref{thm.additiv_ergodic} we know that, almost surely, $|\Am(\w,\tfrac{x}{\e})|$ and $\lambda(\w,\tfrac{x}{\e})$ converge weakly in $L^1(A)$ to $\mathbb{E}[|\Am(\cdot,0)|]$ and $\mathbb{E}[\lambda(\cdot,0)]$, respectively. By \cite[Proposition 2.61]{FoLe} we then have that, almost surely,   
\[
|u_\e-v_\e||\Am(\w,\tfrac{\cdot}{\e})|\, \rightharpoonup \, \mathbb{E}[|\Am(\cdot,0)|]|u-v|,
\]
weakly in $L^1(A)$. Therefore \eqref{c:rmk-sf} follows by Lemma \ref{l.fundamental_estimate} setting $w_{\e,\delta}:=\varphi u_\e + (1-\varphi)v_\e$. 
\end{remark}
We are now in a position to establish the $\Gamma$-convergence result for the functionals $F_\e$. This is done by proving the liminf and limsup inequalities separately. We start with the limsup-inequality whose proof relies on a density and relaxation argument.

\begin{proposition}\label{p.ub}
Let $F_\e$ and $F_{\rm hom}$ be as in \eqref{F-e} and \eqref{F-hom}, respectively. Then, there exists $\widetilde \Omega \in \F$ with $\mathbb P(\widetilde \Omega)=1$ such that for every $\omega \in \widetilde \Omega$, every $u \in L^1_{\rm loc}(\R^d,\R^m)$, there exists a sequence $(u_\e) \subset L^1_{\rm loc}(\R^d,\R^m)$ with $u_\e \to u$ in $L^1_{\rm loc}(\R^d,\R^m)$ satisfying   
\begin{equation}\label{c:ls}
\limsup_{\e\to 0^+}F_{\e}(\w)(u_{\e},A) \leq F_{\rm hom}(u,A),	
\end{equation}
for every $A\in \mathcal A$. 
\end{proposition}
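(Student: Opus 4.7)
Let $\widetilde\Omega$ be the intersection of the set $\Omega'$ from Lemma \ref{l.existence_f_hom} with the full-probability sets on which Theorem \ref{thm.additiv_ergodic} applies to the stationary scalars $|\Lambda(\cdot,0)|$ and $\lambda(\cdot,0)$. Fix $\omega \in \widetilde\Omega$ and $A \in \A$; since $F_{\rm hom}(\cdot,A) = +\infty$ outside $BV(A,\R^m)$, only $u \in BV(A,\R^m)$ needs to be treated. My plan is to prove \eqref{c:ls} in four layers of increasing generality following the Dal~Maso--Modica scheme.

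\textbf{Affine targets on cubes.} For $u = \ell_\xi$ on a cube $Q = Q_\rho(x_0)$, pick along $t = 1/\e$ almost-minimizers $v_\e \in \ell_\xi + W^{1,1}_0((1/\e) Q, \R^m)$ for the cell formula of Lemma \ref{l.existence_f_hom}, and set $u_\e(x) := \e\, v_\e(x/\e)$ on $Q$, extended as $\ell_\xi$ outside. A change of variables and Lemma \ref{l.existence_f_hom} give $F_\e(\omega)(u_\e, Q) \to f_{\rm hom}(\xi)|Q|$, while a Poincar\'e--Rellich compactness argument in $W_0^{1,1}$ combined with Lemma \ref{l.truncation} yields $u_\e \to \ell_\xi$ in $L^1_{\rm loc}(\R^d,\R^m)$.

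\textbf{From affine to $W^{1,1}$.} For a continuous piecewise-affine $u$ with polyhedral partition $\{P_j\}$ on which $u = \ell_{\xi_j} + c_j$, apply the affine step on cubes exhausting each $P_j$ and glue the resulting local sequences via Remark \ref{rmk:sf}, after truncation through Lemma \ref{l.truncation} to enforce its $L^\infty$-hypothesis. Taking the transition strip and $\delta$ to zero simultaneously kills the error terms in \eqref{c:rmk-sf} because the local target functions on adjacent cells agree on their common face. A diagonal extraction then gives
\begin{equation*}
\Gamma\text{-}\limsup_{\e \to 0^+} F_\e(\omega)(u, A) \le \int_A f_{\rm hom}(\nabla u) \dx
\end{equation*}
for piecewise-affine $u$, and strong $W^{1,1}$-density of piecewise-affine functions together with the continuity and linear growth of $f_{\rm hom}$ (Lemma \ref{l.existence_f_hom}), plus $L^1$-lower semicontinuity of the $\Gamma$-limsup, transfers the bound to all of $W^{1,1}(A,\R^m)$.

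\textbf{From $W^{1,1}$ to $BV$ and main obstacle.} For $u \in BV(A,\R^m)$ I would invoke the Ambrosio--Dal Maso relaxation theorem for quasiconvex integrands with linear growth, which identifies $F_{\rm hom}(\cdot, A)$ as the $L^1$-lower semicontinuous envelope on $BV(A,\R^m)$ of the $W^{1,1}$-functional $v \mapsto \int_A f_{\rm hom}(\nabla v)\dx$; composing this with the $W^{1,1}$-bound via a diagonal argument yields \eqref{c:ls} in full generality. The technical heart of the argument is the pasting step: Lemma \ref{l.fundamental_estimate} requires $L^\infty$-bounded target sequences, while the weight $\Lambda$ only belongs to $L^1_{\rm loc}(\R^d)$. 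This is reconciled through the interplay of Lemma \ref{l.truncation}, which furnishes the required $L^\infty$-bounds at the cost of an arbitrarily small energy loss, with Theorem \ref{thm.additiv_ergodic}, which converts the oscillating weights $|\Lambda(\omega,\cdot/\e)|$ and $\lambda(\omega,\cdot/\e)$ into the deterministic limits $\mathbb{E}[|\Lambda(\cdot,0)|]$ and $\mathbb{E}[\lambda(\cdot,0)]$.
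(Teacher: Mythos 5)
Your overall architecture (affine $\to$ piecewise affine $\to$ $W^{1,1}$ by density $\to$ $BV$ by relaxation) is the same as the paper's, but there is one genuine gap. In the last layer you invoke the Ambrosio--Dal Maso relaxation theorem \cite{AmDM92} ``for quasiconvex integrands with linear growth'' to identify $F_{\rm hom}(\cdot,A)$ with the $L^1$-lower semicontinuous envelope of $v\mapsto\int_A f_{\rm hom}(\nabla v)\dx$, yet nowhere do you establish that $f_{\rm hom}$ is quasiconvex. This is not known a priori: Lemma \ref{l.existence_f_hom} only gives existence, continuity and linear growth of the cell-formula limit. Without quasiconvexity the relaxed functional is expressed through the quasiconvex envelope of $f_{\rm hom}$ and need not coincide with \eqref{F-hom}, so the hypothesis of the theorem you cite is exactly the missing ingredient. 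The paper devotes an entire intermediate step to proving $|Q|f_{\rm hom}(\xi)\leq\int_Q f_{\rm hom}(\xi+\nabla u)\dx$ for $u\in C^1_0(Q,\R^m)$, and this step is not routine in the degenerate setting: it combines the already-proven $W^{1,1}$ upper bound with the truncation Lemma \ref{l.truncation} (to get $L^\infty$-bounded competitors), the fundamental estimate in the form of Remark \ref{rmk:sf} (to restore the affine boundary datum $\ell_\xi$), and Theorem \ref{thm.additiv_ergodic} (to control the weight on the transition layer). You must supply this argument, or an equivalent one, before the relaxation step can be used.

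Two secondary remarks. First, in your affine step on a \emph{fixed} cube $Q_\rho(x_0)$, Poincar\'e only gives $\|u_\e-\ell_\xi\|_{L^1(Q)}\leq C\rho$ times the (bounded) energy, so compactness alone does not force the limit to be $\ell_\xi$; the paper resolves this by tiling each simplex with cubes of side $\delta$, obtaining an $O(\delta)$ error, and then diagonalising in $\delta$. Your ``cubes exhausting each $P_j$'' presumably intends the same two-scale construction, but the claim $u_\e\to\ell_\xi$ as stated for a single cube is not justified. Second, your gluing of adjacent cells via Remark \ref{rmk:sf} deviates from the paper, which needs no cut-off at this stage: since each local almost-minimiser lies in $\ell_{\xi_i}+W^{1,1}_0$ of its cube, the glued function automatically matches the continuous piecewise affine target across faces. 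Your cut-off route can be made to work (the error term in \eqref{c:rmk-sf} scales like the strip width because the adjacent affine targets are Lipschitz and agree on the common face, which beats the $\dist(A',\partial A'')^{-1}$ prefactor), but it forces a delicate ordering of the limits in the strip width, $\delta$ and $\e$, and it requires the $L^\infty$ bounds from Lemma \ref{l.truncation} on every cell; the paper's exact boundary matching avoids all of this.
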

\begin{proof}
Let $u \in L^1_{\rm loc}(\R^d,\R^m)$ and $A\in \A$, moreover assume that $u\in BV(A,\R^m)$, otherwise there is nothing to prove. 

We recall that \eqref{c:ls} is equivalent to proving that for every  $u\in BV(A,\R^m)$ and every $A\in \A$ there holds
\begin{equation}\label{G-limsup}
F''(\omega)(u,A)\leq F_{\rm hom}(u,A),
\end{equation}
where $F''(\w):L^1_{\rm loc}(\R^n,\R^m)\times \A\longrightarrow [0,+\infty]$ is defined as
\begin{equation}\label{F''}
F''(\w)(u,A):=\inf\{\limsup_{\e\to 0^+}F_{\e}(\w)(u_{\e},A):\,u_{\e}\to u\text{ in }L^1_{\rm loc}(\R^d,\R^m)\}.
\end{equation}
Moreover, it is well-known that $F''(\w)(\cdot,A)$ is $L^1_{\rm loc}(\R^d,\R^m)$-lower semicontinuous. 

Let $\Omega', \Omega'' \in \F$ be the sets of full probability whose existence is established by Lemma \ref{l.existence_f_hom} and Remark \ref{rmk:sf}, respectively. Set $\widetilde \Omega:=\Omega' \cap \Omega''$; clearly $\mathbb P(\widetilde \Omega)=1$. Throughout the proof $\omega$ is arbitrarily fixed in $\widetilde\Omega$.

We achieve the proof of \eqref{G-limsup} in three steps. 

\medskip

\textbf{Step 1:} \emph{Proof of \eqref{G-limsup} for $u\in W^{1,1}(A,\R^m)$.}

\smallskip

Let $A\in \A$ be fixed; in this step we show that
\begin{equation}\label{eq:Gamma-limsup}
F''(\w)(u,A)\leq \int_A f_{\rm hom}(\nabla u)\dx,	
\end{equation}
for all $u\in W^{1,1}(A,\R^m)$. Since $A$ has Lipschitz boundary, it is not restrictive to assume that $u\in W^{1,1}(\R^d,\R^m)$.

We observe that by the continuity and the linear growth of $f_{\rm hom}$ (cf. Lemma \ref{l.existence_f_hom}) the functional in the right-hand side of \eqref{eq:Gamma-limsup} is continuous with respect to strong $W^{1,1}(A,\R^m)$-convergence. Then, by standard density arguments it suffices to prove \eqref{eq:Gamma-limsup} for (continuous) piecewise affine functions.
That is, we can assume that $u$ is continuous and that there exists a locally finite triangulation $\{T_i\}_{i\in\N}$ of $\R^d$ in non-degenerate $(d+1)$-simplices such that $u|_{T_i}$ is affine for every $i\in\N$. 

To construct a recovery sequence $(u_\e)$ for such a $u$ we first construct it locally, in each $T_i$, in a way so that $u_\e^i:=u_{\e}|_{T_i}\in u+W^{1,1}_0(T_i,\R^m)$, for all $i\in\N$. Then, thanks to the continuity of $u$, the locally defined sequences $(u_\e^i)$ can be ``glued'' together to obtain a recovery sequence $(u_{\e})$ defined on the whole $\R^d$. 

To this end, we first focus on a single simplex $T_i$ and write $u|_{T_i}=\ell_{\xi_i}+b_i$, for some $\xi_i\in \R^{m \times d}$ and $b_i\in \R^m$. For $\delta>0$ small, we consider the family of pairwise disjoint open cubes of side-length $\delta$ contained in $T_i$ defined as 
\begin{equation*}
\mathcal{Q}_i^{\delta}:=\{Q^{\delta}:=Q_\delta(\delta z) \colon z\in\Z^d,\,Q^\delta\subset T_i\}
\end{equation*}
and the inner approximation of $T_i$ defined as $T_i^\delta:=\bigcup_{Q^\delta \in \mathcal{Q}_i^{\delta}} Q^\delta$. We define the sequence $(u^i_{\e})$ separately in each cube in $\mathcal{Q}_i^{\delta}$. Then, for $\e>0$ and $Q^\delta\in\mathcal{Q}_i^{\delta}$ fixed let $v^i_{\e,Q^\delta}\in \ell_{\xi_i}+W^{1,1}_0(\e^{-1}Q^\delta,\R^m)$ satisfy
\begin{equation*}
\int_{\e^{-1}Q^\delta}f(\w,x,\nabla v^i_{\e,Q^\delta})\dx\leq \mu_{\xi_i}(\w,\e^{-1}Q^\delta)+\e,
\end{equation*}
where $\mu_{\xi_i}$ is as in \eqref{c:mu-xi} with $\xi$ replaced by $\xi_i$. Set $u^i_{\e,Q^\delta}:=v^i_{\e,Q^\delta}-\ell_{\xi_i}$, thus $u^i_{\e,Q^\delta}\in W^{1,1}_0(\e^{-1}Q^\delta,\R^m)$.
We then define $u^i_{\e}$ on $T_i$ as 
\[
u^i_{\e}(x):=u|_{T_i}+\sum_{Q^\delta\in\mathcal{Q}_i^{\delta}}\e u^i_{\e, Q^\delta}(x/\e)\chi_{Q^\delta}(x);
\]
we notice that $u^i_\e$ depends also on $\delta$. 

Thanks to the boundary conditions satisfied by $u^i_{\e, Q^\delta}$ we clearly have that $u^i_{\e}\in u+W^{1,1}_0(T_i,\R^m)$. 

By the upper bound in \eqref{gc}, we can estimate the energy of $u^i_{\e}$ on the simplex $T_i$ as follows
\begin{align*}
F_{\e}(\w)(u^i_{\e},T_i)&= \sum_{Q^\delta\in\mathcal{Q}^{\delta}_i}\int_{Q^\delta} f(\w,\tfrac{x}{\e},\nabla u^i_{\e,Q^\delta}(\tfrac{x}{\e})+\xi_i)\dx+\int_{T_i\setminus T^{\delta}_i}f(\w,\tfrac{x}{\e},\xi_i)\dx
\\
&\leq \sum_{Q^\delta\in\mathcal{Q}_i^{\delta}}\e^{d}\int_{\e^{-1}Q^\delta}f(\w,y,\nabla v^i_{\e,Q^\delta})\dy+\e^d\int_{\e^{-1}(T_i\setminus T_i^{\delta})}\big(|\xi_i||\Lambda(\w,y)|+\lambda(\w,y)\big)\dy
\\
&\leq \sum_{Q^\delta\in\mathcal{Q}_i^{\delta}} |Q^\delta|\,\frac{\mu_{\xi_i}(\omega,\e^{-1}Q^\delta)}{\e^{-d} |Q^\delta|}+\e^d\int_{\e^{-1}(T_i\setminus T_i^{\delta})}\big(|\xi_i||\Lambda(\w,y)|+\lambda(\w,y)\big)\dy +o(1),
\end{align*}
as $\e \to 0^+$, where to establish the last inequality we have used the definition of $v^i_{\e,Q^\delta}$. 

Since $\omega \in \widetilde\Omega$, Lemma \ref{l.existence_f_hom} ensures that
\begin{equation*}
\lim_{\e\to 0^+}\frac{\mu_{\xi_i}(\w,\e^{-1}Q^\delta)}{\e^{-d}|Q^{\delta}|}= f_{\rm hom}(\xi_i),	
\end{equation*}
while Theorem \ref{thm.additiv_ergodic} applied to $|\Lambda|+\lambda$ yields
\begin{equation*}
\lim_{\e\to 0^+}\e^d\int_{\e^{-1}(T_i\setminus T^{\delta}_i)}|\Lambda(\w,y)|+\lambda(\w,y)\dy=|T_i\setminus T_i^{\delta}|\,\mathbb{E}[|\Lambda(\cdot,0)|+\lambda(\cdot,0)].
\end{equation*}
Therefore we get 
\begin{align}\nonumber 
	\limsup_{\e\to 0^+}F_{\e}(\w)(u^i_{\e},T_i)&\leq  \sum_{Q^\delta\in\mathcal{Q}_i^{\delta}}|Q^\delta|f_{\rm hom}(\xi_i)+(|\xi_i|+1)|T_i\setminus T_i^{\delta}|\,\mathbb{E}[|\Lambda(\cdot,0)|+\lambda(\cdot,0)]	
	\\\label{eq:almost_recovery}
	&\leq \int_{T_i} f_{\rm hom}(\nabla u)\dy+o(1),
\end{align}
as $\delta \to 0^+$. 
Set 
\[
u_\e:=\begin{cases}
u_\e^i & \text{in $T_i$,\, if $T_i\cap A \neq \emptyset$} 
\cr
u & \text{otherwise in $\R^d$};
\end{cases}
\]
appealing to \eqref{eq:almost_recovery} we have that
\begin{align}\nonumber
\limsup_{\e\to 0^+}F_{\e}(\w)(u_{\e},A) &\leq \hspace{-0.2cm} \sum_{i \colon T_i\cap A\neq\emptyset}\limsup_{\e\to 0^+}F_{\e}(\w)(u^i_{\e},T_i)\leq \hspace{-0.3cm} \sum_{i \colon T_i\cap A\neq\emptyset}\int_{T_i}f_{\rm hom}(\nabla u)\dx + o(1)
\\\label{c:almost-ls}
&\leq \int_A f_{\rm hom}(\nabla u)\dx \, + \hspace{-0.3cm} \sum_{i \colon T_i\cap \partial A\neq\emptyset}\int_{T_i}f_{\rm hom}(\nabla u)\dx + o(1),
\end{align}
as $\delta \to 0^+$.

We now analyse the asymptotic behaviour of $u_{\e}$; to this end we recall that $u_\e$ also depends on $\delta$. In view of  \eqref{c:almost-ls} we can combine Poincar\'e's Inequality and Lemma \ref{l.compactness} to infer that $u_{\e}$ is bounded in $W^{1,1}(A,\R^m)$, uniformly in $\e$. Hence, also taking into account the definition of $u_\e$ up to a subsequences (not relabelled) $u_{\e}\to {u}_{\delta}$ in $L^1_{\rm loc}(\R^d,\R^m)$, with $u_{\delta}\in W^{1,1}(A,\R^m)$. We now estimate the difference between ${u}_{\delta}$ and the target function $u$ in $L^1(T_i,\R^m)$, for every $i$ such that $T_i\cap A \neq \emptyset$. By the Poincar\'e Inequality on the cubes $Q^\delta\in\mathcal{Q}_i^{\delta}$, we have 
\begin{align*}
\|u_{\delta}-u\|_{L^1(T_i)}&=\lim_{\e\to 0^+}\sum_{Q^\delta\in\mathcal{Q}_i^{\delta}} \int_{Q^\delta} |\e u^i_{\e,Q^\delta}(\tfrac{x}{\e})|\dx\leq C\delta\liminf_{\e\to 0^+}\sum_{Q^\delta\in\mathcal{Q}_i^{\delta}}\int_{Q^\delta}|\nabla u^i_{\e,Q^\delta}(\tfrac{x}{\e})|\dx
\\
&\leq C\delta \left(|\xi_i||T_i|+\liminf_{\e\to 0^+}\sum_{Q^\delta\in\mathcal{Q}_i^{\delta}}\int_{Q^\delta}|\xi_i+\nabla u^i_{\e,Q^\delta}(\tfrac{x}{\e})|\dx\right)
\\
&\leq  C\delta \left(|\xi_i||T_i|+\liminf_{\e\to 0^+}\int_{T_i}|\nabla u^i_{\e}|\dx\right).
\end{align*}
Using \eqref{eq:linear_from_below} we then have
\begin{equation*}
\|u_{\delta}-u\|_{L^1(T_i)}\leq C\delta\left(|\xi_i||T_i|+\limsup_{\e\to 0^+}F_{\e}(\w)(u^i_{\e},T_i)\right).
\end{equation*}
Thanks to \eqref{eq:almost_recovery} the term inside the parenthesis above is finite, and therefore we conclude that $u_{\delta}\to u$ in $L^1(T_i,\R^m)$ as $\delta\to 0$ and analogously, also taking into account the definition of $u_\e$ we have that $u_{\delta}\to u$ in $L^1_{\rm loc}(\R^d,\R^m)$.
 
Eventually, by the $L^1_{\rm loc}(\R^d,\R^m)$-lower semicontinuity of $F''(\w)(\cdot,A)$ and by \eqref{c:almost-ls} we obtain
\begin{align*}
F''(\w)(u), A&\leq\liminf_{\delta\to 0^+}F''(\w)(u_{\delta},A)\leq\limsup_{\delta\to 0^+}\limsup_{\e\to 0^+} F_{\e}(\w)(u_\e,A)
\\
&\leq \int_A f_{\rm hom}(\nabla u)\dx \, + \hspace{-0.3cm} \sum_{i \colon T_i\cap \partial A\neq\emptyset}\int_{T_i}f_{\rm hom}(\nabla u)\dx,
\end{align*}
hence the claim follows by a standard diagonal argument also refining the triangulation $\{T_i\}_{i\in \N}$ by choosing simplices of arbitrarily small diameter.

\medskip

\textbf{Step 2:} \emph{Quasiconvexity of $f_{\rm hom}$.}

\smallskip

In this step we prove that the function $f_{\rm hom}$ is quasiconvex, that is, we show that
\begin{equation}\label{eq:f_hom_quasiconvex}
	|Q|f_{\rm hom}(\xi)\leq \int_Q f_{\rm hom}(\xi+\nabla u)\dx, 
\end{equation}
for every $\xi\in\R^{m\times d}$ and every $u\in C^1_0(Q,\R^m)$, where $Q:=Q_\rho(x_0)\subset \R^d$ is an open cube.

Since $\ell_\xi+u\in L^{\infty}(Q,\R^m)$, Lemma \ref{l.truncation} together with Step 1 ensure that for $\eta>0$ small enough there exists $(u_{\e,\eta})\subset W^{1,1}(Q,\R^m)$, with $|u_{\e,\eta}|\leq C_{\eta}$ a.e. in $Q$, such that $u_{\e,\eta}\to \ell_\xi+u$ in $L^1(Q,\R^m)$,  and
\begin{equation*}
\limsup_{\e\to 0^+}F_{\e}(\w)(u_{\e,\eta},Q)\leq (1+\eta)\int_Q f_{\rm hom}(\xi+\nabla u)\dx+\eta.
\end{equation*}
Set $Q_{1-\eta}:=Q_{(1-\eta)\rho}(x_0)$; we notice that since $u\in C_0^{\infty}(Q,\R^m)$, then $u=0$ on $Q\setminus \overline{Q_{1-\eta}}$, for $\eta>0$ small enough.
We now invoke Remark \ref{rmk:sf} to modify $u_{\e,\eta}$ in a neighbourhood of $\partial Q$. Namely, choosing $u_{\e}=u_{\e,\eta}$, $v_{\e}=\ell_\xi$, $A'=Q_{1-\eta}$, $A''= Q$, and $B=Q\setminus \overline{Q_{1-\eta}}$ for every $\delta>0$ we get a sequence $w_{\e,\eta,\delta}\in \ell_\xi+W_0^{1,1}(Q,\R^m)$ such that $w_{\e,\eta,\delta}=u_{\e,\eta}$ on $Q_{1-\eta}$, whereas by \eqref{c:rmk-sf} there holds
\begin{equation}\label{c:c1}
	\limsup_{\e\to 0^+}F_{\e}(\w)(w_{\e,\eta,\delta},Q)\leq (1+\delta)\limsup_{\e\to 0^+}\Big(F_{\e}(\w)(u_{\e,\eta},Q)+F_{\e}(\w)(\ell_\xi,Q\setminus \overline{Q_{1-\eta}})\Big)+C\delta |Q\setminus \overline{Q_{1-\eta}}|.
\end{equation}
Since $\omega \in \widetilde \Omega$ by combining \eqref{gc} and Theorem \ref{thm.additiv_ergodic} we get
\begin{align}\nonumber
\limsup_{\e\to 0^+}F_{\e}(\w)(\ell_\xi,Q\setminus \overline{Q_{1-\eta}})&\leq \limsup_{\e\to 0^+}(|\xi|+1)\int_{Q\setminus \overline{Q_{1-\eta}}}|\Lambda(\w,\tfrac{x}{\e})|+\lambda(\w,\tfrac{x}{\e})\dx
\\\label{c:c2}
&=(|\xi|+1)\,\mathbb{E}[|\Lambda(\cdot,0)|+\lambda(\cdot,0)]\,|Q\setminus \overline{Q_{1-\eta}}|,
\end{align}
moreover, by a change of variables and Lemma \ref{l.existence_f_hom} we have
\begin{equation}\label{c:c3}
|Q|f_{\rm hom}(\xi)=\lim_{\e\to 0^+}\e^d\mu_{\xi}(\w,\e^{-1}Q) \leq \limsup_{\e\to 0^+}F_{\e}(\w)(w_{\e,\eta,\delta},Q).
\end{equation}
Finally, gathering \eqref{c:c1}-\eqref{c:c3} gives
\begin{align*}
|Q|f_{\rm hom}(\xi)\leq& (1+\delta)\left((1+\eta)\int_Q f_{\rm hom}(\xi+\nabla u)\dx+\eta+(|\xi|+1)\,\mathbb{E}[|\Lambda(\cdot,0)|+\lambda(\cdot,0)]\,|Q\setminus \overline{Q_{1-\eta}}|\right)
\\
&\quad+C\delta |Q\setminus \overline{Q_{1-\eta}}|.
\end{align*}  
Therefore \eqref{eq:f_hom_quasiconvex} follows by letting $\delta\to 0^+$ and $\eta\to 0^+$. 

\medskip

\textbf{Step 3:} \emph{Proof of \eqref{G-limsup} by relaxation.}

\smallskip

Since $f_{\rm hom}$ is non-negative, quasiconvex, and satisfies the upper bound $f_{\rm hom}(\xi)\leq C_0|\xi|+C_1$ we can invoke \cite[Theorem 4.1]{AmDM92} to deduce that for $A\in \A$ the $L^1_{\rm loc}(\R^d,\R^m)$-lower semicontinuous envelope of 
\begin{equation*}
	W^{1,1}(A,\R^m)\ni u\mapsto \int_A f_{\rm hom}(\nabla u)\dx
\end{equation*}
on $BV(A,\R^m)$ is given by
\begin{equation*}
F_{\rm hom}(u,A)=\int_A f_{\rm hom}(\nabla u)\dx+\int_A f_{\rm hom}^{\infty}\left(\frac{\mathrm{d}D^su}{\mathrm{d}|D^su|}\right)\,\mathrm{d}|D^su|.
\end{equation*}
Therefore \eqref{G-limsup} follows by taking the $L^1_{\rm loc}(\R^d,\R^m)$-lower-semicontinuous envelope of both sides in \eqref{eq:Gamma-limsup}.
\end{proof}

We now show that the liminf inequality holds true. In this case the proof is achieved by resorting to the Fonseca and M\"uller blow-up method \cite{FoMue} (see also \cite{AmDM92}).

\begin{proposition}\label{p.lb}
Let $F_\e$ and $F_{\rm hom}$ be as in \eqref{F-e} and \eqref{F-hom}, respectively. Then, there exists $\widetilde \Omega \in \F$ with $\mathbb P(\widetilde \Omega)=1$ such that for every $\omega \in \widetilde \Omega$, every $u \in L^1_{\rm loc}(\R^d,\R^m)$, and every sequence $(u_\e) \subset L^1_{\rm loc}(\R^d,\R^m)$ with $u_\e \to u$ in $L^1_{\rm loc}(\R^d,\R^m)$ there holds  
\begin{equation}\label{c:li}
F_{\rm hom}(u,A)\leq\liminf_{\e\to 0^+}F_{\e}(\w)(u_{\e},A),	
\end{equation}
for every $A\in \mathcal A$. 
\end{proposition}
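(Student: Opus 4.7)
The plan is to adapt the Fonseca--M\"uller blow-up method to the $BV$-setting, as in \cite{AmDM92}, and to handle the random degenerate weight via Lemma \ref{l.truncation} and Lemma \ref{l.fundamental_estimate} (plus Remark \ref{rmk:sf}). I take $\widetilde\Omega$ to be the intersection of the full-measure sets from Lemma \ref{l.existence_f_hom}, Theorem \ref{thm.additiv_ergodic} (applied to $|\Lambda(\cdot,0)|$ and $\lambda(\cdot,0)$), and Remark \ref{rmk:sf}. Fix $\omega\in\widetilde\Omega$, $A\in\A$, and $u_\e\to u$ in $L^1_{\rm loc}(\R^d,\R^m)$. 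I may assume $\liminf_{\e}F_\e(\omega)(u_\e,A)<+\infty$; then by Lemma \ref{l.compactness} (and after passing to a subsequence realising the liminf) $u\in BV(A,\R^m)$. Viewing $\nu_\e(\omega,\cdot):=F_\e(\omega)(u_\e,\cdot\cap A)$ as a sequence of uniformly bounded Radon measures on $A$, I extract a further subsequence such that $\nu_\e\overset{*}{\rightharpoonup}\nu$ and decompose $\nu=\tilde f\,\mathcal L^d\LL A+\nu^s$ with $\nu^s\perp\mathcal L^d$. The liminf inequality \eqref{c:li} follows if I prove
\begin{equation}\label{c:LB_ac}
\tilde f(x_0)\geq f_{\rm hom}(\nabla u(x_0))\qquad\text{for }\mathcal L^d\text{-a.e. }x_0\in A,
\end{equation}
\begin{equation}\label{c:LB_sing}
\frac{\mathrm d\nu^s}{\mathrm d|D^su|}(x_0)\geq f_{\rm hom}^\infty\!\left(\frac{\mathrm dD^su}{\mathrm d|D^su|}(x_0)\right)\qquad\text{for }|D^su|\text{-a.e. }x_0\in A.
\end{equation}

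For \eqref{c:LB_ac} I choose a $x_0\in A$ which is simultaneously a Lebesgue point of $u$, $\nabla u$ and $\tilde f$, where $|D^su|$ has density zero, and where the Besicovitch derivative $\tilde f(x_0)=\lim_{\rho\to 0^+}\rho^{-d}\nu(Q_\rho(x_0))$ exists. For a.e.\ $\rho$ small one has $\nu(\partial Q_\rho(x_0))=0$, so $\nu_\e(Q_\rho(x_0))\to\nu(Q_\rho(x_0))$. I set $\xi_0:=\nabla u(x_0)$ and would like to compare $F_\e(\omega)(u_\e,Q_\rho(x_0))$ with the cell problem $\mu_{\xi_0}(\omega,\e^{-1}Q_\rho(x_0))$. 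To match affine boundary data, I first apply Lemma \ref{l.truncation} with a small $\eta>0$ to replace $u_\e$ by an $L^\infty$-bounded sequence $u_\e^\eta$ of comparable energy; then, since $u_\e^\eta\to u$ in $L^1(Q_\rho(x_0),\R^m)$ and $u-\ell_{\xi_0}(\cdot-x_0)-u(x_0)$ is $o(\rho)$ in $L^1$ on $Q_\rho(x_0)$ at a Lebesgue point, I use Remark \ref{rmk:sf} on slightly nested cubes $Q_{\rho(1-\sigma)}(x_0)\subset\subset Q_\rho(x_0)$, with $v_\e\equiv\ell_{\xi_0}(\cdot-x_0)+u(x_0)$, to produce $w_\e\in \ell_{\xi_0}(\cdot-x_0)+u(x_0)+W^{1,1}_0(Q_\rho(x_0),\R^m)$ with
\begin{equation*}
\limsup_{\e\to 0^+}F_\e(\omega)(w_\e,Q_\rho(x_0))\leq (1+\delta)\limsup_{\e\to 0^+}F_\e(\omega)(u_\e^\eta,Q_\rho(x_0))+r(\rho,\sigma,\delta,\eta),
\end{equation*}
where the remainder $r$ is controlled by $\mathbb E[|\Lambda(\cdot,0)|]$ times $\|u_\e^\eta-\ell_{\xi_0}(\cdot-x_0)-u(x_0)\|_{L^1}/(\sigma\rho)$ on the transition annulus plus $\delta\,\mathbb E[\lambda(\cdot,0)]\,|\mathrm{annulus}|$. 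Rescaling $y=\e^{-1}x$ turns $w_\e$ into an admissible competitor for $\mu_{\xi_0}(\omega,\e^{-1}Q_\rho(x_0))$, so Lemma \ref{l.existence_f_hom} yields $\rho^d f_{\rm hom}(\xi_0)\leq\limsup_\e F_\e(\omega)(w_\e,Q_\rho(x_0))$. Dividing by $\rho^d$, letting $\e\to 0^+$, then $\sigma,\delta,\eta\to 0^+$, and finally $\rho\to 0^+$, the remainder vanishes (the Lebesgue-point property makes the $L^1$-annulus term $o(\rho)/(\sigma\rho)\to 0$ after $\rho\to 0$ first at fixed $\sigma$) and delivers \eqref{c:LB_ac}.

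For \eqref{c:LB_sing} I argue analogously at a $|D^su|$-a.e.\ point $x_0$ where $A(x_0):=\mathrm dDu/\mathrm d|Du|(x_0)$ exists as a Besicovitch derivative and $\lim_\rho \rho^{-d}|D^su|(Q_\rho(x_0))=+\infty$ (which holds $|D^su|$-a.e.). I set $t_\rho:=|D^su|(Q_\rho(x_0))/\rho^d\to+\infty$ and compare with the affine target $u(x_0)+t_\rho\ell_{A(x_0)}(\cdot-x_0)$, using the same gluing recipe (Lemma \ref{l.truncation} then Remark \ref{rmk:sf}) to reach an admissible competitor for $\mu_{t_\rho A(x_0)}(\omega,\e^{-1}Q_\rho(x_0))$. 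Lemma \ref{l.existence_f_hom} then gives $\rho^d f_{\rm hom}(t_\rho A(x_0))\leq \limsup_\e F_\e(\omega)(w_\e,Q_\rho(x_0))$; dividing by $|D^su|(Q_\rho(x_0))=\rho^d t_\rho$ and passing to the limit $\rho\to 0^+$ yields the recession function $f_{\rm hom}^\infty(A(x_0))$ on the left, while the right-hand side converges to $\mathrm d\nu^s/\mathrm d|D^su|(x_0)$ by the Besicovitch theorem. Integrating \eqref{c:LB_ac} and \eqref{c:LB_sing} against $\mathcal L^d$ and $|D^su|$ respectively finishes the proof because $\nu(A)\leq\liminf_\e F_\e(\omega)(u_\e,A)$.

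The main obstacle is the singular-part analysis: the boundary matching between $u_\e$ and the linear profile $u(x_0)+t_\rho\ell_{A(x_0)}(\cdot-x_0)$ has to survive both $\e\to 0^+$ and $t_\rho\to+\infty$. Here the strength of Remark \ref{rmk:sf} is essential, since the cross term is linear in $|u_\e^\eta-v_\e|$ and in $|\Lambda(\omega,\cdot/\e)|$: after using ergodic weak $L^1$-convergence of $|\Lambda(\omega,\cdot/\e)|$ to the constant $\mathbb E[|\Lambda(\cdot,0)|]$ one bounds the gluing error by $C\mathbb E[|\Lambda(\cdot,0)|]\rho^{-1}\sigma^{-1}\int_{\mathrm{annulus}}|u-u(x_0)-t_\rho\ell_{A(x_0)}(\cdot-x_0)|$, and this is $o(|D^s u|(Q_\rho(x_0)))$ by the very definition of Besicovitch derivative (after dividing by $|D^s u|(Q_\rho(x_0))=\rho^dt_\rho$). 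The truncation step is also crucial because Remark \ref{rmk:sf} requires $L^\infty$-bounded sequences; linear growth is what makes the $(1+\eta)$-loss from Lemma \ref{l.truncation} harmless in the final optimisation.
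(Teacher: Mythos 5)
Your set-up, your treatment of the absolutely continuous part, and the reduction to $L^\infty$-bounded sequences via Lemma \ref{l.truncation} all match the paper's argument. The gap is in the singular part. You compare $u_\e$ on a standard cube $Q_\rho(x_0)$ with the affine profile $u(x_0)+t_\rho\ell_{A(x_0)}(\cdot-x_0)$, where $t_\rho=|D^su|(Q_\rho(x_0))/\rho^d$, and claim the gluing error
\[
\frac{C}{\sigma\rho}\int_{Q_\rho(x_0)\setminus Q_{(1-\sigma)\rho}(x_0)}\bigl|u-u(x_0)-t_\rho A(x_0)(\cdot-x_0)\bigr|\dx
\]
is $o\bigl(|D^su|(Q_\rho(x_0))\bigr)$ ``by the very definition of Besicovitch derivative''. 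It is not. The Besicovitch derivative controls the measure $\D u$, not the $L^1$-distance of $u$ from an affine function; the blow-up of $u$ at a $|\D^su|$-generic point, after the correct rescaling by $|\D u|(Q_\rho)/\rho^d$, is a monotone one-dimensional profile $\psi(\langle y,n\rangle)\eta$ (a step function at jump points), not a linear one. Concretely, for $u=a\chi_{\{x_1>0\}}+b\chi_{\{x_1<0\}}$ at $x_0=0$ the error above is of order $|a-b|\rho^{d-1}=|D^su|(Q_\rho(x_0))$, i.e.\ of the same order as the main term, with a constant that does not vanish as $\rho,\sigma\to 0^+$. This is exactly the obstruction that Substep 1.2 of the paper's proof is built to overcome: it uses the rank-one structure $A(x_0)=\eta\otimes n$ from \cite[Theorem 2.3]{AmDM92}, elongated cubes $C^k_{r}(x_0)$ of aspect ratio $k$, and an anisotropic transition layer $C^k_{r,s}(x_0)$ placed near the faces orthogonal to $n$, so that the lateral contribution is $O(1/k)$ (the jump amplitude per unit cross-section being $k^{1-d}$ by \eqref{eq:jump_amplitude}) and the contribution near the top and bottom faces vanishes because the monotone profile $\psi^k$ is there close to its one-sided limits, which are matched by the affine competitor $v_k$ by construction. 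Only after the iterated limits $\e\to0^+$, $h\to+\infty$, $s\to0^+$, $k\to+\infty$ does the gluing error disappear.

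A secondary point: even granting the boundary matching, you need the rank-one structure of $A(x_0)$ together with the rank-one convexity of $f_{\rm hom}$ (a consequence of its quasiconvexity, Step 2 of Proposition \ref{p.ub}) to guarantee that $f_{\rm hom}(t_\rho A(x_0))/t_\rho$ actually converges to $f^\infty_{\rm hom}(A(x_0))$ along your particular sequence $t_\rho\to+\infty$; since $f^\infty_{\rm hom}$ is defined as a $\limsup$, an upper bound for $f_{\rm hom}(t_\rho A(x_0))/t_\rho$ along one diverging sequence does not by itself dominate it.
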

\begin{proof}
Let $\Omega', \Omega'' \in \F$ be the sets of probability one whose existence is established by Lemma \ref{l.existence_f_hom} and Remark \ref{rmk:sf}, respectively. Set $\widetilde \Omega:=\Omega' \cap \Omega''$; clearly $\mathbb P(\widetilde \Omega)=1$. Throughout the proof $\omega$ is arbitrary, fixed, and belongs to $\widetilde\Omega$. 
 
Let $u \in L^1_{\rm loc}(\R^d,\R^m)$ and let $(u_\e) \subset L^1_{\rm loc}(\R^d,\R^m)$ be such that $u_\e \to u$ in $L^1_{\rm loc}(\R^d,\R^m)$. Let $A\in \mathcal A$ and suppose that 
\begin{equation}\label{c:li-bd}
\liminf_{\e\to 0^+}F_{\e}(\w)(u_{\e},A)<+\infty
\end{equation}
(otherwise there is nothing to prove). Moreover, up to subsequences, we can also assume that the liminf in \eqref{c:li-bd} is actually a limit.  
Thanks to \eqref{c:li-bd}, we immediately deduce that $(u_\e)\subset W^{1,1}(A,\R^m)$, moreover by Lemma \ref{l.compactness} we know that $u\in BV(A,\R^m)$.   

The proof of \eqref{c:li} is carried out in two main steps.

\medskip

{\bf Step 1:} \emph{Proof of \eqref{c:li} for sequences $(u_\e)$ equi-bounded in $L^{\infty}(A,\R^m)$.} 

\smallskip

Assume that there exists $M<+\infty$ such that for every $\e>0$
\begin{equation}\label{eq:uniformbound}
\|u_{\e}\|_{L^{\infty}(A,\R^m)}\leq M. 
\end{equation} 
For $\omega \in \Omega$ fixed and for every Borel subset $B$ of $A$ we define the finite Radon-measures $\nu_{\e}$ as
\begin{equation*}
\nu_{\e}(\omega, B):=\int_B f(\w,\tfrac{x}{\e},\nabla u_{\e})\dx.
\end{equation*}

By \eqref{c:li-bd} the total variation of the sequence $(\nu_{\e})$ is equi-bounded, therefore, up to subsequences, we have that  $\nu_{\e}\overset{\star}{\rightharpoonup}\nu$, for some nonnegative finite Radon measure $\nu$. By the Lebesgue Decomposition Theorem, we can write $\nu= \nu^a+ \nu^s$, where $\nu^a$ and $\nu^s$ are, respectively, absolutely continuous and singular with respect to the Lebesgue measure. We then have 
\[
\nu^a = \tilde f(\omega, x) \dx,
\] 
for some nonnegative integrable function $\tilde f$.   

Since $A$ is open, the weak$^\star$ convergence of $\nu_\e$ to $\nu$ implies that
\begin{equation*}
\liminf_{\e\to 0^+}F_{\e}(\w)(u_{\e},A)=\liminf_{\e\to 0^+}\nu_{\e}(\omega, A)\geq \nu(\omega, A)= \int_{A}\tilde{f}(\omega,x)\dx+\nu^s(\omega,A).
\end{equation*} 
We now separately estimate from below the integrand $\tilde{f}$ and the measure $\nu^s$. Since $\frac{\D u}{|\D u|}=\frac{\D^s u}{|\D^s u|}$ for $|\D^s u|$-a.e. $x_0\in A$, to prove \eqref{c:li} it suffices to show that for every $\omega \in \widetilde \Omega$
\begin{align}\label{c:volume}
	\widetilde{f}(\omega, x_0)&\geq f_{\rm hom}(\nabla u(x_0))\qquad\,\text{ for a.e. }x_0\in A,
	\\\label{c:cantor}
	\frac{{\rm d}\nu^s}{{\rm d}|\D u|}(\omega, x_0)&\geq f^\infty_{\rm hom}\Big(\frac{{\rm d}\D u}{{\rm d} |\D u|}(x_0)\Big)\;\text{ for }|\D^s u|\text{-a.e. }x_0\in A,
\end{align}
where $f_{\rm hom}$ is as in Lemma \ref{l.existence_f_hom} and $f^\infty_{\rm hom}$ denotes its recession function. 
\smallskip

{\bf Substep 1.1:} \emph{Proof of \eqref{c:volume}.}

\smallskip

Let $x_0\in A$ and let $r>0$ be so small that $Q_r(x_0)\subset A$. Since $\nu$ is a finite Radon measure, it follows that $\nu(\omega,\partial Q_r(x_0))=0$ except for a countable number of radii. Then, the Besicovitch Differentiation Theorem \cite[Theorem 1.153]{FoLe} and the Portmanteau Theorem imply that for a.e. $x_0\in A$ (along a suitable sequence $r\to 0^+$) we have
\begin{equation*}
\tilde{f}(\omega,x_0)=\lim_{r\to 0^+}\frac{\nu(\omega, Q_{r}(x_0))}{r^d}= \lim_{r\to 0^+}\lim_{\e\to 0^+}\frac{\nu_{\e}(\omega,Q_{r}(x_0))}{r^d}.
\end{equation*}
Therefore to prove \eqref{c:volume} it suffices to show that for a.e. $x_0\in A$ we have
\begin{equation}\label{eq:blowup}
\liminf_{r\to 0^+}\liminf_{\e\to 0^+}\dashint_{Q_r(x_0)}f_{\e}(\w,\tfrac{x}{\e},\nabla u_{\e})\dx\geq f_{\rm hom}(\nabla u(x_0)).
\end{equation}
Let $x_0$ be a Lebesgue point of $u$ and $\nabla u$ and set $L_{u,x_0}(x):=u(x_0)+\nabla u(x_0)(x-x_0)$.

Due to \eqref{eq:uniformbound} we can invoke Remark \ref{rmk:sf} to modify $u_{\e}$ close to $\partial Q_r(x_0)$. Namely, Remark \ref{rmk:sf} applied to the sequences $u_{\e}$ and $v_{\e}=L_{u,x_0}\in L^{\infty}(A,\R^m)$ and to the open sets $A'= Q_{s r}(x_0)$, with $s\in (0,1)$, $A''=Q_r(x_0)$, and $B=Q_r(x_0) \setminus \overline{Q_{s r}(x_0)}$, for every $\delta>0$ provides us with $w_{\e,\delta}\in W^{1,1}(A,\R^m)$ satisfying $w_{\e,\delta}=u_{\e}$ on $Q_{sr}(x_0)$ and $w_{\e,\delta}=L_{u,x_0}$ on $\partial Q_r(x_0)$. Moreover, \eqref{c:rmk-sf} now reads as 
\begin{align}\nonumber
\liminf_{\e\to 0^+}F_{\e}(\w)(w_{\e,\delta},Q_{r}(x_0))&\leq (1+\delta)\liminf_{\e\to 0^+}\Big(F_{\e}(\w)(u_{\e},Q_r(x_0))+F_{\e}(\w)(L_{u,x_0},Q_{r}(x_0)\setminus \overline{Q_{sr}(x_0)})\Big)
\\\label{c:f}
&\quad+\frac{C}{(1-s)r}\int_{Q_{r}(x_0)\setminus Q_{sr}(x_0)}|u(x)-L_{u,x_0}(x)|\dx+C\delta |Q_{r}(x_0)\setminus Q_{sr}(x_0)|,
\end{align}
where used that $\dist(Q_{sr}(x_0),\partial Q_r(x_0))=\tfrac{1}{2}(1-s)r$. 

Since $w_{\e,\delta}$ is admissible as test function in the minimisation problem defining the ergodic process $\mu_{\nabla u(x_0)}$ and $\omega \in \widetilde \Omega$, invoking Lemma \ref{l.existence_f_hom} gives  
\begin{equation}\label{c:s}
	\liminf_{\e\to 0^+}F_{\e}(\w)(w_{\e,\delta},Q_{r}(x_0))\geq \lim_{\e\to 0^+}\e^d\mu_{\nabla u(x_0)}(\w,\e^{-1} Q_r(x_0))=f_{\rm hom}(\nabla u(x_0))r^d.
\end{equation}
Moreover by \eqref{gc}, appealing to Theorem \ref{thm.additiv_ergodic} we get 
\begin{align}\nonumber
\limsup_{\e\to 0^+}F_{\e}(\w)(L_{u,x_0},Q_{r}(x_0)\setminus \overline{Q_{sr}(x_0)})&\leq \lim_{\e\to 0^+}(|\nabla u(x_0)+1)\int_{Q_r(x_0)\setminus {Q_{sr}(x_0)}}\big(|\Am(\w,\tfrac{x}{\e})|+\lambda(\w,\tfrac{x}{\e})\big)\dx
\\\label{c:t}
&=(|\nabla u(x_0)|+1)\mathbb{E}[|\Lambda(\cdot,0)|+\lambda(\cdot,0)]|Q_r(x_0)\setminus Q_{sr}(x_0)|.
\end{align}
Gathering \eqref{c:f}-\eqref{c:t} yields
\begin{align*}
\frac{r^d}{1+\delta}f_{\rm hom}(\nabla u(x_0))&\leq \liminf_{\e\to 0}F_{\e}(\w)(u_{\e},Q_r(x_0))+C\delta|Q_{r}(x_0)\setminus Q_{sr}(x_0)|
\\
&\quad+\frac{C}{(1-s)r}\int_{Q_r(x_0)}|u(x)-L_{u,x_0}(x)|\dx.
\end{align*}
We now divide the above inequality by $r^d$; then recalling that due to \cite[Theorem 1, p. 228]{EvGa}
\[
\lim_{r\to 0^+}\frac{1}{r^{d+1}}\int_{Q_r(x_0)}|u(x)-L_{u,x_0}(x)|\dx=0,
\]
for a.e. $x_0\in A$, we first let $\delta\to 0^+$, then $r\to 0^+$, and eventually $s\to 1^-$ thus getting for a.e. $x_0\in A$ 
\begin{equation*}
f_{\rm hom}(\nabla u(x_0))\leq \liminf_{r\to 0^+}\liminf_{\e\to 0^+}\frac{1}{r^d}F_{\e}(\w)(u_{\e},Q_r(x_0)),
\end{equation*}
and thus the claim.

\smallskip

\noindent \textbf{Substep 1.2:} \emph{Proof of \eqref{c:cantor}.}

\smallskip

Thanks to \cite[Theorem 2.3]{AmDM92}, for $|\D^s u|$-a.e. $x_0\in A$ the following properties hold true:
	
	\smallskip
	
	$(i)$ $\frac{{\rm d}\D u}{{\rm d} |\D u|}(x_0)=\eta(x_0)\otimes n(x_0)$ for some $\eta(x_0)\in\R^m$ and $n(x_0)\in\R^d$ with $|\eta(x_0)|=|n(x_0)|=1$;
	
	\smallskip
	
	$(ii)$ setting $C_r(x_0)=x_0+r C$, for $r>0$ and $C$ a bounded, convex, open set with $0\in C$, there holds
	\begin{equation}\label{eq:measure_blow-up}
		\lim_{r\to 0^+}\frac{\D u(C_r(x_0))}{|\D u|(C_r(x_0))}=\eta(x_0)\otimes n(x_0),\quad\quad\lim_{r\to 0^+}\frac{|\D u|(C_r(x_0))}{r^d}=+\infty;
	\end{equation}
	
	\smallskip
	
	$(iii)$ defining $w_{r}\in BV(C,\R^m)$ as
	\begin{equation}\label{c:wr}
	w_{r}(y):=\frac{r^d}{|\D u|(C_r(x_0))} r^{-1}\left(u(x_0+r y)-\dashint_C u(x_0+r x)\dx\right),
	\end{equation}
	there exist a subsequence $r_h\to 0^+$, as $h \to+\infty$, and a function $w\in BV(C,\R^m)$ such that $w_{r_h}\to w$ in $L^1(C,\R^m)$. Moreover, $w$ can be represented as 
	\[
	w(y)=\psi(\langle y,n(x_0)\rangle)\eta(x_0),
	\] 
	where $\psi:(a,b)\to\R$ is a non-decreasing function with
	\begin{equation*}
	a:=\inf\{\langle y,n(x_0)\rangle:\,y\in C\},\quad\quad b:=\sup\{\langle y,n(x_0)\rangle:\,y\in C\}.
	\end{equation*}
	We notice that in the proof of $(iii)$ the passage to subsequences is only needed when using the compactness properties of bounded sequences in $BV$ and of bounded measures. Therefore, we can apply $(iii)$ along a further subsequence of radii which we choose below, to invoke Portmanteau's Theorem.

Let $x_0 \in A$ be fixed and such that properties $(i)\hbox{-}(iii)$ hold true. 
 
Set $n:=n(x_0)\in \mathbb S^{d-1}$; we complete the vector $n$ to an orthonormal basis $n_1,\ldots,n_{d-1},n$ of $\R^d$. 
In the same spirit as in the proof of \cite[Lemma 3.9]{BFM}, we choose the convex set $C$ to be: 
\begin{equation*}
C^k:=\{x\in\R^d:\,|\langle x,n\rangle|<1/2,\,|\langle x,n_i\rangle|<k/2 \quad\text{ for all }1\leq i\leq d-1\},
\end{equation*}
for $k\in\N$. With this choice there holds that $a=-\tfrac{1}{2}$ and $b=\tfrac{1}{2}$. Moreover set $C_r^k(x_0):=rC^k(x_0)=x_0+r C^k$ and let $w_r^k$ be as in \eqref{c:wr} with $C$ replaced by $C^k$. 

Again invoking the Besicovitch Differentiation Theorem \cite[Theorem 1.153]{FoLe}, also using \eqref{eq:measure_blow-up},  and the Portmanteau Theorem we can assume that along a sequence $r_h\to 0^+$
\begin{equation*}
	\frac{{\rm d}\nu^s}{{\rm d}|\D u|}(\omega, x_0)=\lim_{h\to +\infty}\frac{\nu(\omega, C^k_{r_h}(x_0))}{|\D u|(C^k_{r_h}(x_0))}=\lim_{h\to +\infty}\lim_{\e\to 0^+}\frac{\nu_{\e}(\omega, C^k_{r_h}(x_0))}{|\D u|(C^k_{r_h}(x_0))}.
\end{equation*}
Hence, in view of $(i)$ to get \eqref{c:cantor} it suffices to show that
\begin{equation}\label{eq:claim_singular}
	\limsup_{k\to +\infty}\limsup_{h\to +\infty}\liminf_{\e\to 0^+}\frac{1}{|\D u|(C^k_{r_h}(x_0))}	\int_{C^k_{r_h}(x_0)}f(\w,\tfrac{x}{\e},\nabla u_{\e}(x))\dx\geq f_{\rm hom}^{\infty}(\eta\otimes n).
\end{equation}

Up to refining the subsequence $r_h\to 0^+$, by (iii) we find a function $w^k\in BV(C^k,\R^m)$ such that $w^k_{r_h}\to w^k$ in $L^1(C^,\R^m)$; furthermore  $w^k$ can be represented as 
	\[
	w^k(y)=\psi^k(\langle y,n(x_0)\rangle)\eta(x_0),
	\] 
for some non-decreasing function $\psi^k \colon (-\frac{1}{2}, \frac{1}{2}) \to \R$.  Since $|\eta(x_0)\otimes n(x_0)|=1$, by slicing we have
\begin{equation}\label{c:mass-Dwk}
|Dw^k|(C^k(x_0))=k^{d-1}\left(\psi^k(\tfrac{1}{2}^-)-\psi^k(-\tfrac{1}{2}^+)\right).
\end{equation} 
As shown in the proof of \cite[Theorem 2.3]{AmDM92}, the measure $|\D w^k|$ coincides with the weak$^*$-limit of the measures $|\D w^k_{r_h}|$. Thanks to \cite[Lemma 5.1]{L} we get $|\D w^k|(C^k(x_0))=1$, which by \eqref{c:mass-Dwk} yields 
\begin{equation}\label{eq:jump_amplitude}
\left(\psi^k(\tfrac{1}{2}^-)-\psi^k(-\tfrac{1}{2}^+)\right)=k^{1-d}.
\end{equation}
Moreover, since $w^k_r$ has zero mean-value we get that
\begin{equation}\label{eq:meanzero}
	\int_{-1/2}^{1/2}\psi^k(t)\,\mathrm{d}t=0.
\end{equation}

Set $t_h^k:=\tfrac{|\D u|(C_{r_h}^k(x_0))}{r_h^d}$. By \eqref{eq:measure_blow-up} we have that 
\begin{equation}\label{eq:scale_t}
	\lim_{r\to +\infty}t_h^k=+\infty.
\end{equation}
Moreover, thanks to \eqref{eq:uniformbound}, we can apply \eqref{c:rmk-sf} choosing the functions $u_{\e}$,
\begin{equation*}
v_{k}(x):=t_h^k\left(k^{1-d}\eta\langle n,x-x_0\rangle+ r_h\eta\frac{\psi^k(\tfrac{1}{2}^-)+\psi^k(-\tfrac{1}{2}^+)}{2}\right)+\dashint_{C_{r_h}^k(x_0)}u(y)\dy,
\end{equation*}
where $\eta:=\eta(x_0)$, and the sets 
\begin{equation*}
A''=C_{r_h}^k(x_0),\quad 	B=C^k_{r_h,s}(x_0),\quad A'=C_{r_h}^k(x_0)\setminus \overline{C_{r_h,s}^k(x_0)},
\end{equation*}
where for $s\in (0,1)$ we define the anisotropic annular set
\begin{equation*}
C^k_{r,s}(x_0):=x_0+\left(\{x\in C_r^k:\,\max_{1\leq i\leq d-1}\{|\langle x,n_i\rangle|\}>(k-s)r/2\}\cup\{x\in C_r^k:\,\,|\langle x,n\rangle|>(1-s)r/2\}\right).
\end{equation*}
Since $\omega\in \widetilde \Omega$ and $\nabla v_k=t_h^k k^{1-d}\eta\otimes n$, from Lemma \ref{l.existence_f_hom} we deduce that for any $\delta>0$
\begin{align}\label{eq:applying_fund_est}
|C_{r_h}^k(x_0)|f_{\rm hom}(t_h^{k} k^{1-d}\eta\otimes n)&\leq (1+\delta)\liminf_{\e\to 0^+} \Big(F_{\e}(\w)(u_{\e},C_{r_h}^k(x_0))+F_{\e}(\w)(v_k,C^k_{r_h,s}(x_0))\Big)\nonumber
\\
&\quad+\frac{C}{sr}\int_{C^k_{r_h,s}(x_0)}|u-v_k|\dx+C\,\delta.
\end{align}
Furthermore, \eqref{gc} and Theorem \ref{thm.additiv_ergodic} give
\begin{align*}
	\limsup_{\e\to 0^+}F_{\e}(\w)(v_k,C^k_{r_h,s}(x_0))&\leq C (t_h^k k^{1-d}|\eta\otimes n|+1)|C^k_{r_h,s}(x_0)|
	\\
	&\leq Ct_h^k k^{1-d}s |C_{r_h}^k(x_0)|,
\end{align*}
where we also used that $t_h^k k^{1-d}\geq 1$ for $h=h(k)$ large enough. 

By the arbitrariness of $\delta>0$ in \eqref{eq:applying_fund_est}, appealing to the rank-one convexity of $f_{\rm hom}$ (see Step 2 in the proof of Proposition \ref{p.ub}) and to \eqref{eq:scale_t} we deduce
\begin{align*}
f_{\rm hom}^\infty(\eta\otimes n)=\lim_{h\to +\infty }\frac{f_{\rm hom}(t_{h}^k k^{1-d}\eta\otimes n)}{t_{h}^k k^{1-d}}&\leq\limsup_{h\to +\infty}\liminf_{\e\to 0^+}\frac{1}{t_h^k k^{1-d}|C^k_{r_h}(x_0)|}F_{\e}(\w)(u_{\e},C_{r_h}^k(x_0))+C s
\\
&\quad+\liminf_{h\to +\infty}\frac{C}{t_{r_h}^k k^{1-d}|C_{r_h}^k(x_0)|sr}\int_{C_{r_h,s}^k(x_0)}|u-v_k|\dx.
\end{align*}
By definition of $t_h^k$ there holds
\begin{equation*}
t_h^k k^{1-d}|C_{r_h}^k(x_0)|=|\D u|(C_{r_h}^k(x_0))k^{1-d}|C^k(x_0)|=|\D u|(C_{r_h}^k(x_0)),
\end{equation*}
therefore \eqref{eq:claim_singular} follows if we show that
\begin{equation}\label{eq:triplelimit}
\limsup_{k\to +\infty}\limsup_{s\to 0^+}\liminf_{h\to +\infty}\frac{1}{|\D u|(C_{r_h}^k(x_0))sr_h}\int_{C_{r_h,s}^k(x_0)}|u-v_k|\dx=0.
\end{equation}
To prove \eqref{eq:triplelimit} we start observing that $C_{r,s}^k=r C_{1,s}^k$, hence by a change of variables we get
\begin{equation*}
	\frac{1}{|\D u|(C_r^k(x_0))sr}\int_{C_{r,s}^k(x_0)}|u-v_k|\dx=\frac{r^d}{|\D u|(C_{r}^k(x_0))s}\int_{C^k_{1,s}}\left|\frac{u(x_0+ry)-v_k(x_0+ry)}{r}\right|\dy.
\end{equation*}
Moreover, in view of the definition of $v_k$ we have
\begin{align*}
\frac{r^d}{|\D u|(C_r^k(x_0))}\frac{u(x_0+ry)-v_k(x_0+ry)}{r}&=\frac{r^d}{|\D u|(C_r^k(x_0))}r^{-1}\Big(u(x_0+ry)-\dashint_{C_1^k}u(x_0+rx)\dx\Big)
\\
&\quad- \left(k^{1-d}\eta\langle n,y\rangle+\eta \frac{\psi^k(\tfrac{1}{2}^-)+\psi^k(-\tfrac{1}{2}^+)}{2}\right)
\\
&=w_r(y)- \left(k^{1-d}\eta\langle n,y\rangle+ \eta\frac{\psi^k(\tfrac{1}{2}^-)+\psi^k(-\tfrac{1}{2}^+)}{2}\right).
\end{align*}
Then, if we replace $r$ with the sequence $(r_h)$ chosen as above, we obtain that
\begin{align}
	&\liminf_{h\to +\infty}\frac{1}{t_h^k s}\int_{C^k_{1,s}}\left|\frac{u(x_0+r_h y)-v_k(x_0+r_h y)}{r_h}\right|\dy\nonumber
	\\
	=&\frac{1}{s}\int_{C^k_{1,s}}\left|\psi^k(\langle n,y\rangle)\eta-\left(k^{1-d}\eta\langle n,y\rangle+\eta \frac{\psi^k(\tfrac{1}{2}^-)+\psi^k(-\tfrac{1}{2}^+)}{2}\right)\right|\dy\nonumber
	\\
	=&\frac{1}{s}\int_{C^k_{1,s}}\left|\psi^k(\langle n,y\rangle)-\left(k^{1-d}\langle n,y\rangle+ \frac{\psi^k(\tfrac{1}{2}^-)+\psi^k(-\tfrac{1}{2}^+)}{2}\right)\right|\dy,\label{eq:should_vanish}
\end{align}
where we also used that $|\eta|=1$. 

We now estimate \eqref{eq:should_vanish}. To do so we split the domain of integration in two (non-disjoint) subsets by writing $C^k_{1,s}=A^k_{1,s}\cup B^k_{1,s}$ with
\begin{align*}
A^k_{1,s}&=\{y\in C^k:\,\max_{1\leq i\leq d-1}|\langle y, n_i\rangle|> (k-s)/2\},
\\
B^k_{1,s}&=\{y\in C^k:\,|\langle y,n\rangle|> (1-s)/2\}.
\end{align*}
Then, the idea to conclude is as follows: the measure of $A^k_{1,s}$ is of order $(d-1)sk^{d-2}$ while the integrand decays like $k^{1-d}$; on the other hand, in $B_{1,s}^k$ the quantity $\langle n,y\rangle$ is close to $\pm 1/2$ and, by construction, the integrand vanishes at these points. 

Rigorously, since 
\[
A_{1,s}^k=\bigcup_{i=1}^{d-1} C^k\cap \{y\colon |\langle y,n_i \rangle|\geq (k-s)/2\}
\]
we get that $|A^k_{1,s}|\leq (d-1)sk^{d-2}$, where we have used the fact that the vectors $(n_i)_{i=1}^{d-1}$ and $n$ form an orthonormal basis. Moreover, \eqref{eq:jump_amplitude}, the monotonicity of $\psi^k$, and \eqref{eq:meanzero} imply that $|\psi^k(t)|\leq k^{1-d}$ for all $t\in (-1/2,1/2)$. Hence 
\begin{equation}\label{eq:control_on_A}
	\frac{1}{s}\int_{A_{1,s}^k}\left|\psi^k(\langle n,y\rangle)-\left(k^{1-d}\langle n,y\rangle+ \frac{\psi^k(\tfrac{1}{2}^-)+\psi^k(-\tfrac{1}{2}^+)}{2}\right)\right|\dy\leq \frac{C}{s}|A^k_{1,s}|k^{1-d}\leq\frac{C}{k},
\end{equation}
uniformly for $s\in(0,1)$. We now estimate the contribution coming from the integration on $B_{1,s}^k$. By an orthogonal change of variables we get
\begin{align*}
&\frac{1}{s}\int_{B_{1,s}^k}\left|\psi^k(\langle n,y\rangle)-\left(k^{1-d}\langle n,y\rangle+ \frac{\psi^k(\tfrac{1}{2}^-)+\psi^k(-\tfrac{1}{2}^+)}{2}\right)\right|\dy
\\
=&\frac{k^{d-1}}{s}\int_{-\tfrac{1}{2}}^{-\tfrac{1-s}{2}}\left|\psi^k(t)-\left(k^{1-d}t+ \frac{\psi^k(\tfrac{1}{2}^-)+\psi^k(-\tfrac{1}{2}^+)}{2}\right)\right|\,\mathrm{d}t
\\
&+\frac{k^{d-1}}{s}\int_{\tfrac{1-s}{2}}^{\tfrac{1}{2}}\left|\psi^k(t)-\left(k^{1-d}t+ \frac{\psi^k(\tfrac{1}{2}^-)+\psi^k(-\tfrac{1}{2}^+)}{2}\right)\right|\,\mathrm{d}t=:I^-_{k,s}+I^+_{k,s}.
\end{align*}
Due the monotonicity of $\psi$ we find a sequence of positive numbers $(\gamma^k_s)_{s>0}$ with $\lim_{s\to 0^+}\gamma^k_s=0$ such that 
\begin{align*}
	&\left|\psi^k(t)-\psi^k(-\tfrac{1}{2}^+)\right|\leq\gamma_s^k\quad\text{ for all }-\tfrac{1}{2}<t<-\tfrac{1-s}{2},
	\\
	&\left|\psi^k(t)-\psi^k(\tfrac{1}{2}^-)\right|\leq\gamma_s^k\qquad\text{ for all }\tfrac{1-s}{2}<t<\tfrac{1}{2}.
\end{align*}
Hence we can estimate the last two integrals as follows:
\begin{align*}
I^-_{k,s}+I^+_{k,s}&\leq k^{d-1}\gamma^k_s+\frac{s}{2}+\frac{k^{d-1}}{2}\left|\psi^k(-\tfrac{1}{2}^+)-\left(-\frac{k^{1-d}}{2}+ \frac{\psi^k(\tfrac{1}{2}^-)+\psi^k(-\tfrac{1}{2}^+)}{2}\right)\right|
\\
&\quad +\frac{k^{d-1}}{2}\left|\psi^k(\tfrac{1}{2}^-)-\left(\frac{k^{1-d}}{2}+ \frac{\psi^k(\tfrac{1}{2}^-)+\psi^k(-\tfrac{1}{2}^+)}{2}\right)\right|.
\end{align*}
By \eqref{eq:jump_amplitude} the last two terms equal zero and we obtain
\begin{equation*}
\limsup_{s\to 0^+}\left(I^-_{k,s}+I^+_{k,s}\right)\leq \limsup_{s\to 0^+}\left(k^{d-1}\gamma^k_s+\frac{s}{2}\right)=0.
\end{equation*}
By combining the latter with \eqref{eq:control_on_A} and \eqref{eq:should_vanish} we get  \eqref{eq:triplelimit} and hence the claim.

\medskip

{\bf Step 2:} \emph{Proof of \eqref{c:li} for general sequences $(u_\e)$.} 

\smallskip

Let $(u_{\e})\subset W^{1,1}(A,\R^m)$ be as in \eqref{c:li-bd}. For $\e>0$ and $\eta>0$ fixed let $u_{\e,\eta}\in W^{1,1}(A,\R^m)$ be the function given by Lemma \ref{l.truncation}; therefore
\begin{equation}\label{c:uffa}
	\liminf_{\e\to 0^+}F_{\e}(\w)(u_{\e},A)\geq \frac{1}{1+\eta}\liminf_{\e\to 0^+}F_{\e}(\w)(u_{\e,\eta},A)-\eta.
\end{equation}
Since $|u_{\e,\eta}|<C_\eta$ a.e. in $A$ , thanks to \eqref{c:uffa} we can invoke Lemma \ref{l.compactness} to deduce the existence of a subsequence (not relabelled) such that $u_{\e,\eta}\to u_{\eta}$ in $L^1(A,\R^m)$, as $\e \to 0^+$, for some $u_{\eta}\in BV(A,\R^m)$. Moreover, Step 1 implies that
\begin{equation}\label{eq:eta_inequality}
	\liminf_{\e\to 0^+}F_{\e}(\w)(u_{\e},A)\geq \frac{1}{1+\eta}F_{\rm hom}(u_{\eta},A)-\eta.
\end{equation} 
Thanks to \cite[Theorem 4.1]{AmDM92} the functional $F_{\rm hom}(\cdot,A)$
is $L^1(A,\R^m)$-lower semicontinuous on $BV(A,\R^m)$. Therefore, in view of \eqref{eq:eta_inequality}, to conclude it suffices to show that $u_{\eta}\to u$ in $L^1(A,\R^m)$. 

By Lemma \ref{l.truncation} and the $L^1$-convergence of $u_{\e,\eta}$ to $u_{\eta}$ it follows that $|u_{\eta}|\leq |u|$ a.e. on $A$; moreover by construction $u_{\eta}=u$ a.e. on $\{|u|\leq\eta^{-1}\}$. Hence the Dominated Convergence Theorem yields $u_{\eta}\to u$ as $\eta\to 0^+$ and thus the claim. 
\end{proof}

By combining the results proven in this section together with those in Section \ref{sec:existence} we are now able to prove Theorem \ref{thm.Gamma_pure}.

\begin{proof}[Proof of Theorem \ref{thm.Gamma_pure}]
Lemma \ref{l.compactness} shows that the domain of the $\Gamma$-limit of $F_\e(\omega)(\cdot, A)$ is $BV(A,\R^m)$. Then, statements in $i.$ and $ii.$ are proven in Lemma~\ref{l.existence_f_hom} and in Proposition \ref{p.ub}, Step 2. Eventually, the almost sure $\Gamma$-convergence of the functionals $F_\e(\omega)(\cdot, A)$, statement $iii.$, follows by Propositions \ref{p.ub} and \ref{p.lb}. 	
\end{proof}

\section{$\Gamma$-convergence with Dirichlet boundary conditions}\label{s.bc}

This short section is devoted to the proof of Theorem \ref{thm:bc}.

\begin{proof}[Proof of Theorem \ref{thm:bc}]
Let $\widetilde \Omega \in \F$ be as in Theorem \ref{thm.Gamma_pure} and $\widehat \Omega$ be such that the sequence of functions $(M_{\e}(\w))_\e$ defined in \eqref{eq:boundarymeasure} is locally equi-integrable in $\R^d$, for every $\omega\in \widehat \Omega$.
Throughout the proof $\omega$ is arbitrarily fixed in $\widetilde \Omega \cap \widehat \Omega$. 

We start by proving the liminf-inequality. To this end, fix $A\in \A$ and let $(u_{\e}) \subset L^1(A,\R^m)$ and $u\in L^1(A,\R^m)$
be such that $u_\e \to u$ in $L^1(A,\R^m)$.
Without loss of generality, we can assume that 
\begin{equation*}
\sup_{\e>0}F_{\e}^{u_0}(\w)(u_{\e},A)<+\infty,
\end{equation*}
therefore $(u_\e) \subset u_0 + W^{1,1}_0(A,\R^m)$. We then extend $u_{\e}$ to the whole $\R^d$ by setting $u_{\e}:=u_0$ on $\R^d\setminus A$. 

For $r\in (0,1)$ given, consider the sets $A_r:=\{x\in\R^d\setminus A:\,\dist(x,A)< r\}$. By \eqref{gc} we get
\begin{equation*}
F_{\e}(\w)(u_{\e},A\cup A_r)\leq F^{u_0}_{\e}(\w)(u_{\e},A)+F_{\e}^{u_0}(\w)(u_0,A_r)\leq F_{\e}^{u_0}(\w)(u_{\e},A)+\int_{A_r}\big(M_{\e}(\w)(x)+\lambda(\w,\tfrac{x}{\e})\big)\dx.
\end{equation*}
By virtue of the equi-integrability of $M_{\e}(\w)$ and $\lambda(\w,\cdot/\e)$ (cf. Theorem \ref{thm.additiv_ergodic}),
given $\delta>0$,  there exists $r_\delta>0$ such that for every $r\in (0,r_\delta)$
\begin{equation*}
\int_{A_r}\big(M_{\e}(\w)(x)+\lambda(\w,\tfrac{x}{\e})\big)\dx\leq\delta,
\end{equation*}
for every $\e>0$. 
Then, setting $\tilde{u}:=\chi_A u+(1-\chi_A)u_0$, by applying Theorem \ref{thm.Gamma_pure} in the open set $A\cup A_r$ (notice that $A\cup A_r\in \A$, for $r$ small) we obtain 
\begin{equation*}
\int_{A\cup A_r} f_{\rm hom}(\nabla \tilde{u})\dx+\int_{A\cup A_r}f_{\rm hom}^{\infty}\left(\frac{\mathrm{d}D^s\tilde{u}}{\mathrm{d}|D^s\tilde{u}|}\right)\,\mathrm{d}|D^s\tilde{u}|\leq\liminf_{\e\to 0^+}F_{\e}^{u_0}(\w)(u_{\e},A)+\delta,
\end{equation*}
for every $r\in (0,r_\delta)$. Therefore letting first $r\to 0^+$ and then $\delta\to 0^+$, we infer that
\begin{equation*}
\int_{A} f_{\rm hom}(\nabla u)\dx+\int_{\overline{A}}f_{\rm hom}^{\infty}\left(\frac{\mathrm{d}D^s\widetilde{u}}{\mathrm{d}|D^s\widetilde{u}|}\right)\,\mathrm{d}|D^s\widetilde{u}|\leq\liminf_{\e\to 0^+}F^{u_0}_{\e}(\w)(u_{\e},A),
\end{equation*}
hence the claim follows by the one-homogeneity of the recessions function $f_{\rm hom}^{\infty}$ and by \cite[Corollary 3.89]{AmbFusPal}.

\medskip

We now turn to the proof of the limsup-inequality. 

We start observing that it is not restrictive to assume that target function $u\in BV(A,\R^m)$ satisfies $u=u_0$ in a neighbourhood of $\partial A$. Indeed, this can be achieved by the following approximation argument. For $u\in BV(A,\R^m)$ fixed, define $\tilde{u}:=\chi_A u+(1-\chi_A)u_0$; then by combining \cite[Theorem 1.2]{Schm} and \cite[Theorem 3]{KrRi} we can deduce the existence of a sequence $(u_n)\subset u_0+C_c^{\infty}(A,\R^m)$ such that $u_n\to \tilde{u}$ in $L^1(\R^d,\R^m)$ and
\begin{equation*}
\lim_{n\to +\infty}F_{\rm hom}(u_n,A')=F_{\rm hom}(\tilde{u},A'),
\end{equation*}
for every $A'\in \A$ with $\overline{A} \subset A'$.
Now assume that the $\Gamma$-limsup inequality holds true for $u_n$, that is 
\[
(F^{u_0})''(\omega)(u_n,A) \leq F^{u_0}_{\rm hom}(u_n,A), 
\]
for every $n\in \N$, where $(F^{u_0})''$ denotes the $\Gamma$-limsup of $F_\e^{u_0}$ (cf. \eqref{F''}). 
Then by the lower semicontinuity of $(F^{u_0})''(\omega)$ we get
\begin{align*}
(F^{u_0})''(\omega)(u,A)&\leq \liminf_{n\to+\infty}(F^{u_0})''(\omega)(u_n,A)\leq \liminf_{n\to +\infty}F^{u_0}_{\rm hom}(u_n,A)
\\
&\leq \lim_{n\to +\infty}F^{u_0}_{\rm hom}(u_n,A')=F^{u_0}_{\rm hom}(\tilde{u},A').
\end{align*}
Then, the claim follows by letting $A'\searrow \overline{A}$, again using the one-homogeneity of $f_{\rm hom}^{\infty}$ and \cite[Corollary 3.89]{AmbFusPal}.

Hence we only need to prove the upper bound inequality for those target functions $u$ belonging to $u_0+C_c^{\infty}(A,\R^m)$. 
To do so, we need to modify a recovery sequence given by Theorem \ref{thm.Gamma_pure} close to $\partial A$, in order to satisfy the correct trace-constraint. 

Let $\eta>0$, Lemma \ref{l.truncation} applied to a recovery sequence $(u_{\e})$ for $u$ yields a sequence $(u_{\e,\eta})$ which is bounded in $L^{\infty}(\R^d,\R^m)$ uniformly in $\e$ and such that $u_{\e,\eta}=u_{\e}$ on $\{|u_{\e}|\leq\eta^{-1}\}$. Moreover, it satisfies 
\begin{equation}\label{eq:trunc_finite_energy}
\limsup_{\e\to 0^+}F_{\e}(\w)(u_{\e,\eta},A)\leq (1+\eta)F_{\rm hom}(u,A)+\eta,
\end{equation}  
where we have also used the limsup-inequality in Theorem \ref{thm.Gamma_pure}.
Next we apply Lemma \ref{l.fundamental_estimate} with $u=u_{\e,\eta}$, $v=u_0$, $A''=A$ and $B=A\setminus \overline{A'}$, where $A'\in \A$ is such that $u=u_0$ in $A\setminus \overline{A'}$. Hence, for any $\delta>0$ we obtain a sequence $(w_{\e,\eta,\delta})$ such that $w_{\e,\eta,\delta}=u_0$ in a neighbourhood of $\partial A$, $w_{\e,\eta,\delta}= u_{\e,\eta}$ in $\overline{A'}$, and
\begin{align}\label{eq:big_est}
& \; \limsup_{\e\to 0^+}F_{\e}^{u_0}(\w)(w_{\e,\eta,\delta},A)=\limsup_{\e\to 0^+}F_{\e}(\w)(w_{\e,\eta,\delta},A)\nonumber
\\
&\leq (1+\delta)\Big((1+\eta)F_{\rm hom}(u,A)+\eta+\limsup_{\e\to 0^+}F_{\e}(\w)(u_0,A\setminus\overline{A'})\Big)\nonumber
\\
&\quad +\limsup_{\e\to 0}\frac{C}{\dist(A',\partial A)}\int_{A\setminus \overline{A'}}|u_{\e,\eta}-u_0||\Am(\w,\tfrac{x}{\e})|\dx+C\delta|A\setminus\overline{A'}|.
\end{align}
where we tacitly used that $|\partial A'|=0$. Since
\begin{equation*}
\limsup_{\e\to 0^+}F_{\e}(\w)(u_0,A\setminus\overline{A'})\leq \limsup_{\e\to 0^+}\int_{A\setminus \overline{A'}}\big(M_{\e}(\w)(x)+\lambda(\w,\tfrac{x}{\e})\big)\dx,
\end{equation*}
the equi-integrability of $M_\e(\omega)+\lambda(\w,\tfrac{\cdot}{\e})$ again implies that the term above becomes arbitrarily small when $A' \nearrow A$, independently of $\eta$ and $\delta$. 

We now estimate the integral in \eqref{eq:big_est}. Choosing a subsequence which realises the $\limsup$, Lemma \ref{l.compactness} together with \eqref{eq:trunc_finite_energy} implies that (up to a further subsequence) there holds 
$u_{\e,\eta}\to u_{\eta}$ in $L^1(A,\R^m)$ and a.e. in $A$, for some $u_{\eta}\in L^{\infty}(A,\R^m)$ satisfying $u_{\eta}=u=u_0$ a.e. on $\{|u_0|<\eta^{-1}\}\cap A\setminus\overline{A'}$. 
We claim that Assumption \ref{a.2} implies that (along that subsequence) 
\begin{equation}\label{c:final}
|u_{\e,\eta}-u_0||\Am(\w,\tfrac{x}{\e})|\rightharpoonup \mathbb{E}[|\Am(\cdot,0)]|u_{\eta}-u_0|,
\end{equation}
in $L^1(A)$. Indeed, for $k\in \N$ set $L_k:=\{|u_0|\leq k\}$; then the sequence $|u_{\e,\eta}-u_0|$ is bounded in $L^{\infty}(A\cap L_k)$ and converges a.e. to $|u_{\eta}-u_0|$, while by Theorem \ref{thm.additiv_ergodic} the sequence $|\Am(\w,\tfrac{\cdot}{\e})|$ converges weakly in $L^1(A\cap L_k)$ to $\mathbb{E}[|\Am(\cdot,0)|]$. Hence from \cite[Proposition 2.61]{FoLe} we infer that
\begin{equation*}
	|u_{\e,\eta}-u_0||\Am(\w,\tfrac{\cdot}{\e})|\rightharpoonup \mathbb{E}[|\Am(\cdot,0)]|u_{\eta}-u_0|\text{ in }L^1(A\cap L_k).
\end{equation*}
Moreover, since 
\begin{equation*}
	|u_{\e,\eta}-u_0||\Am(\w,\tfrac{x}{\e})|\leq \sup_{\e}\|u_{\e,\eta}\|_{L^{\infty}(A)}|\Am(\w,\tfrac{x}{\e})|+M_{\e}(\w)(x),
\end{equation*}
Assumption \ref{a.2} and Theorem \ref{thm.additiv_ergodic} imply that $|u_{\e,\eta}-u_0||\Am(\w,\tfrac{x}{\e})|$ is bounded in $L^1(A)$ and equi-integrable. 

Therefore, since by definition of $L_k$ we have that $|A\setminus L_k|\to 0$ when $k\to +\infty$, we can easily deduce \eqref{c:final}.  
Hence, we obtain that 
\begin{equation*}
	\limsup_{\e\to 0^+}\frac{1}{\dist(A',\partial A)}\int_{A\setminus \overline{A'}}|u_{\e,\eta}-u_0||\Am(\w,\tfrac{x}{\e})|\dx\leq \frac{C}{\dist(A',\partial A)}\int_{A\setminus \overline{A'}}|u_{\eta}-u_0|\dx.
\end{equation*}
Recalling that on $A\setminus\overline{A'}$ we have $|u_{\eta}|\leq |u|=|u_0|$, the Dominated Convergence Theorem ensures that the right-hand side in the expression above vanishes as $\eta\to 0^+$.

Eventually, letting first $\delta\to 0^+$, then $\eta\to 0^+$, and finally $A' \nearrow A$ in \eqref{eq:big_est}, we obtain
\begin{equation*}
	\limsup_{\eta\to 0}\limsup_{\delta\to 0}\limsup_{\e\to 0}F_{\e}^{u_0}(\w)(w_{\e,\eta,\delta},A)\leq F_{\rm hom}(u,A)=F_{\rm hom}^{u_0}(u,A).
\end{equation*}
In order to conclude we need to estimate the difference between $u$ and $w_{\e,\eta,\delta}$. To this end, we recall that $w_{\e,\eta,\delta}$ is given by a convex combination of $u_{\e,\eta}$ and $u_0$ with a cut-off function and that $w_{\e,\eta,\delta}=u_{\e,\eta}$ in $\overline{A'}$. Then, arguing as above it can be easily shown that $\lim_{\eta\to 0^+}\limsup_{\e\to 0^+}\|u_{\e,\eta}-u\|_{L^1(A)}=0$. Hence we have
\begin{align*}
	\limsup_{\eta\to 0^+}\limsup_{\e\to 0^+}\|u-w_{\e,\eta,\delta}\|_{L^1(A)}&\leq \limsup_{\eta\to 0^+}\limsup_{\e\to 0^+}\|u_{\e,\eta}-w_{\e,\eta,\delta}\|_{L^1(A)}
	\\
	&\leq \limsup_{\eta\to 0^+}\limsup_{\e\to 0^+}\|u_{\e,\eta}-u_0\|_{L^1(A\setminus\overline{A'})}=\|u-u_0\|_{L^1(A\setminus\overline{A'})}=0.
\end{align*}
Hence by a standard diagonal argument we get
\begin{equation*}
(F^{u_0})''(\omega)(u,A)\leq F^{u_0}_{\rm hom}(u,A),
\end{equation*}
and hence the limsup-inequality.
\end{proof}

\appendix

\section{Measurability}\label{app:2}
Let $\xi\in \R^{m\times d}$ and $A\in \A$ be fixed; this last section is devoted to the proof of the $\F$-measurability of $\omega \mapsto \mu_{\xi}(\w,A)$, defined as in \eqref{c:mu-xi}.

\begin{lemma}\label{l.measurable}
Let $f$ satisfy Assumption \ref{a.1} and for $\xi\in \R^{m\times d}$ let $\mu_{\xi}$ be defined as in \eqref{c:mu-xi}. Then $\omega \mapsto \mu_{\xi}(\w,A)$ is $\F$-measurable for every $\xi \in \R^{m\times d}$ and for every $A\in \A$.
\end{lemma}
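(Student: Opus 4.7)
The plan is to interpret $\mu_\xi(\w,A)$ as the value function of a variational problem on a Polish space and to deduce its $\F$-measurability via the measurable projection theorem. Hypothesis (A1), which only gives lower semicontinuity of $f$ in $\xi$, rules out a direct argument based on expressing $\mu_\xi(\w,A)$ as an infimum over a countable dense family of test functions.

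Concretely, I introduce the Polish space $X:=\ell_\xi+W^{1,1}_0(A,\R^m)$ (a closed affine subspace of a separable Banach space) and the functional
\begin{equation*}
\Phi(\w,u):=\int_A f(\w,x,\nabla u(x))\dx,
\end{equation*}
and I will check: (i) for every fixed $u\in X$, the map $\w\mapsto\Phi(\w,u)$ is $\F$-measurable, as a consequence of the joint $\F\otimes\L^d\otimes\mathcal{B}^{m\times d}$-measurability of $f$ and of Fubini--Tonelli applied to $(\w,x)\mapsto f(\w,x,\nabla u(x))$; and (ii) for every fixed $\w$, the map $u\mapsto\Phi(\w,u)$ is strongly $W^{1,1}$-sequentially lower semicontinuous, obtained by extracting from $u_n\to u$ in $W^{1,1}(A,\R^m)$ an a.e.\ convergent subsequence of $(\nabla u_n)$ and combining (A1) with Fatou's Lemma. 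These two properties say that $\Phi$ is a normal integrand on $\Omega\times X$ with $X$ Polish; the classical fact that a lower semicontinuous function on a Polish space can be written as a countable supremum of continuous functions, applied pointwise in $\w$ and then combined with (i), upgrades this to joint $\F\otimes\mathcal{B}(X)$-measurability of $\Phi$.

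Once joint measurability is available, for every $c\in\R$ the sub-level set $E_c:=\{(\w,u)\in\Omega\times X:\Phi(\w,u)<c\}$ belongs to $\F\otimes\mathcal{B}(X)$, and its projection onto $\Omega$ coincides with $\{\w:\mu_\xi(\w,A)<c\}$. Since $X$ is Polish, the measurable projection theorem yields that this projection is an analytic (Souslin) subset of $\Omega$; the completeness of $(\Omega,\F,\P)$, which is built into the paper's standing assumptions, then forces every analytic set to lie in $\F$. Hence $\{\w:\mu_\xi(\w,A)<c\}\in\F$ for every $c\in\R$, giving the claimed $\F$-measurability of $\w\mapsto\mu_\xi(\w,A)$.

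The main obstacle is precisely the asymmetry of (A1): lower semicontinuity of $f$ in $\xi$ makes Fatou's Lemma go in the direction opposite to what one would need in order to recover $\mu_\xi(\w,A)$ as an infimum of $\Phi(\w,\cdot)$ over a countable $W^{1,1}$-dense subset of $X$. The linear upper bound in (A2) does not repair this, since the weight $|\Am(\w,\cdot)|$ is only in $L^1_{\rm loc}$, so that no Dominated Convergence estimate is available along, e.g., piecewise-affine approximants with unbounded gradients. The measurable projection theorem, combined with the completeness of $\F$, is the standard abstract tool that resolves this difficulty cleanly.
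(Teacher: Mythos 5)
Your overall architecture (value function of a variational problem on a Polish space, then measurable projection plus completeness of $(\Omega,\F,\P)$) is exactly the abstract mechanism the paper uses, packaged there as Lemma \ref{l.oninf}. Steps (i), (ii) and the final projection step are fine. The genuine gap is the passage from ``$\w\mapsto\Phi(\w,u)$ measurable for each $u$, and $u\mapsto\Phi(\w,u)$ lower semicontinuous for each $\w$'' to joint $\F\otimes\mathcal{B}(X)$-measurability of $\Phi$. Separate measurability plus lower semicontinuity in one variable does \emph{not} in general imply joint measurability (the Carath\'eodory argument needs continuity, not just lsc, in the Polish variable), and your proposed repair is circular: writing $\Phi(\w,\cdot)=\sup_n g_n^\w$ with $g_n^\w(u)=\inf_{v\in X}\{\Phi(\w,v)+n\,d(u,v)\}$ produces approximants whose $\w$-measurability is itself an infimum over the Polish space $X$ of a family that is, at this stage, only separately measurable --- precisely the problem you started with. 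Nor can you take that infimum over a countable dense subset of $X$: lower semicontinuity bounds $\Phi(\w,v')$ from \emph{below} as $v'\to v$, so the infimum over a dense set may be strictly larger. Without joint measurability the sub-level sets $E_c$ need not lie in $\F\otimes\mathcal{B}(X)$, and the projection theorem cannot be invoked.

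The paper resolves exactly this point by performing the Lipschitz (Moreau--Yosida) regularisation at the level of the \emph{integrand} in the finite-dimensional variable $\xi$: the infimum defining $f_k(\w,x,\xi)=\inf_{\zeta}\{f(\w,x,\zeta)+k|\zeta-\xi|\}$ is handled by the abstract lemma with $Y=\R^{m\times d}$ (here joint measurability of the expression in $(\w,x,\zeta)$ is available from Assumption \ref{a.1}), so $f_k$ becomes a Carath\'eodory integrand. The associated functional $F^k(\w)(u)=\int_A f_k(\w,x,\nabla u)\dx$ is then \emph{continuous} in $u$ on $\ell_\xi+W^{1,1}_0(A,\R^m)$ (thanks to the bound $f_k\leq\lambda+k|\xi|$) and measurable in $\w$, hence jointly measurable by the standard Carath\'eodory argument; joint measurability passes to $\Phi=\sup_k F^k$ by the monotone convergence $f_k\nearrow f$, which uses (A1). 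Only then is the projection-type lemma applied with $Y=\ell_\xi+W^{1,1}_0(A,\R^m)$. If you insert this regularisation step (or some equivalent proof that $\Phi$ is a normal integrand), your argument closes.
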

The proof of Lemma \ref{l.measurable} relies on the following abstract measurability result proven in \cite[Lemma C.2]{RR21}.
\begin{lemma}\label{l.oninf}
Let $Y$ be a complete, separable metric space and let $\mathcal{B}(Y)$ denote the Borel $\sigma$-algebra on $Y$. Let $(X,\mathcal{T},m)$ be a complete measure space and assume that $F:X\times Y\to \R\cup\{+\infty\}$ is $\mathcal{T}\otimes \mathcal{B}(Y)$-measurable and that $y\mapsto F(x,y)$ is lower semicontinuous and not constantly equal to $+\infty$ for every $x\in X$. Then the function $x\mapsto\inf_{y\in Y} F(x,y)$ is $\mathcal{T}$-measurable.
\end{lemma}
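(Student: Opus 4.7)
My plan is to reduce the measurability of the infimum to a projection statement and then invoke the classical measurable projection theorem. The starting observation is the identity
\begin{equation*}
\{x \in X : \inf_{y \in Y} F(x,y) < c\} = \pi_X\bigl(\{(x,y)\in X\times Y : F(x,y) < c\}\bigr),
\end{equation*}
valid for every $c \in \R$, where $\pi_X\colon X\times Y \to X$ denotes the coordinate projection. Since $F$ is $(\mathcal{T}\otimes\mathcal{B}(Y))$-measurable, the sublevel set $E_c := \{F<c\}$ lies in $\mathcal{T}\otimes\mathcal{B}(Y)$, so the whole task boils down to showing that $\pi_X(E_c)\in\mathcal{T}$.

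For this step I will invoke the classical measurable projection theorem: because $Y$ is Polish and $(X,\mathcal{T},m)$ is complete, the $\pi_X$-image of any set in $\mathcal{T}\otimes\mathcal{B}(Y)$ belongs to $\mathcal{T}$. Applying this to $E_c$ and letting $c$ run over $\Q$ shows that all sublevel sets of $x\mapsto\inf_y F(x,y)$ lie in $\mathcal{T}$, which yields the desired $\mathcal{T}$-measurability.

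Let me comment on where the difficulty lies and on the role of the hypotheses. The projection-theorem step is the only deep ingredient; everything else is routine bookkeeping. Strictly speaking, the lower-semicontinuity assumption is not exploited in the argument above, since joint measurability together with completeness of $\mathcal{T}$ and Polishness of $Y$ already suffice. The role of lsc, coupled with the requirement that $F(x,\cdot)\not\equiv+\infty$, is rather to guarantee that $\inf_y F(x,y)$ is a genuine pointwise infimum in $\R\cup\{-\infty\}$ for each $x$ (and that the slices $\{F(x,\cdot)\leq c\}$ are closed). A seemingly more elementary route via a Moreau--Yosida regularisation $F_n(x,y):=\inf_{z\in Y}\bigl(F(x,z)+n\,d(y,z)\bigr)$---which is $n$-Lipschitz in $y$ and satisfies $\inf_y F_n(x,\cdot)=\inf_y F(x,\cdot)$, so that separability of $Y$ would reduce the computation to a countable infimum of continuous-in-$y$ functions over a dense set---turns out to be circular, because proving the measurability of $x\mapsto F_n(x,y)$ for each fixed $y$ is itself another instance of the original problem. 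Hence the measurable projection theorem appears to be the unavoidable and genuinely nontrivial ingredient.
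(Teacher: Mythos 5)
Your argument is correct, but note that the paper itself does not prove this lemma: it is quoted verbatim from \cite[Lemma C.2]{RR21}, so there is no in-paper proof to compare against. Your route---write $\{x:\inf_y F(x,y)<c\}=\pi_X(\{F<c\})$ and invoke the measurable projection theorem for the Polish factor $Y$ and the complete base space---is the standard way such lemmas are established, and every step is sound: the sublevel identity is exact, $\{F<c\}\in\mathcal{T}\otimes\mathcal{B}(Y)$ by joint measurability, and rational $c$ suffices to conclude measurability of the extended-real-valued infimum (including the possible value $-\infty$). Your side remarks are also accurate: the projection argument never uses lower semicontinuity in $y$ (that hypothesis is what makes the lemma usable elsewhere, e.g.\ it is needed for $f_k\nearrow f$ in Lemma \ref{l.measurable}, not for the measurability of the infimum per se), and the Moreau--Yosida detour is indeed circular at this level of generality---consistently, the paper's proof of Lemma \ref{l.measurable} handles the measurability of the Moreau--Yosida transform precisely by appealing to this abstract lemma rather than the other way around.

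One small point of care: the measurable projection theorem is usually stated for complete \emph{$\sigma$-finite} measure spaces (or complete probability spaces), whereas the lemma as stated only says ``complete measure space''. If you cite, say, Castaing--Valadier or Dellacherie--Meyer, you should either add the $\sigma$-finiteness proviso or argue via the fact that the projection is a Suslin set over $\mathcal{T}$ and that the Carath\'eodory-measurable sets of an outer measure are closed under the Suslin operation. In the two applications made in this paper ($X=\Omega$ with a complete probability measure, and $X=\Omega\times\R^d$ with the completed $\sigma$-finite product measure) this is immaterial, but as a freestanding proof of the stated lemma the hypothesis under which you invoke the projection theorem should be made explicit.
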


\begin{proof}[Proof of Lemma \ref{l.measurable}]
We recall that by assumption $(\Omega,\F,\mathbb P)$ is a complete probability space, thus we can set $f(\w,x,\xi)=|\xi|$ on the set with zero probability where $|\Lambda(\w,\cdot)|+\lambda(\w,\cdot)$ is not locally integrable and this modification does not affect the measurability of $f$. We now ``regularise'' the integrand $f$ in the variable $\xi$ by considering its Moreau-Yosida transform. That is, for every $k\in \N$ we consider the function defined as
\begin{equation*}
f_k(\w,x,\xi):=\inf_{\zeta\in\R^{m\times d}}\{f(\w,x,\zeta)+k|\zeta-\xi|\};
\end{equation*}
it is well-known that $f_k$ is $k$-Lipschitz in $\xi$. 

We now want to apply Lemma \ref{l.oninf} with $X=\Omega \times \R^d$ and $Y=\R^{m\times d}$. To do so, 
we need to complete the product $\sigma$-algebra $\mathcal{F}\otimes\mathcal{L}^d$ with respect to the product measure $\mathbb{P}\times d\mathcal L^d$; we denote this completion by $\overline{\mathcal{F}\otimes\mathcal{L}^d}$. Then,
considering $\xi$ as a parameter, by Assumption \ref{a.1} and Lemma \ref{l.oninf} we deduce that the function $(\w,x)\to f_k(\w,x,\xi)$ is $\overline{\mathcal{F}\otimes\mathcal{L}^d}$-measurable. Hence, by a well-known property of Carath\'eodory functions it follows that $f_k$ is $\overline{\mathcal{F}\otimes\mathcal{L}^d}\otimes\mathcal{B}(\R^{m\times d})$-measurable. Therefore, for every fixed $u\in \ell_{\xi}+W_0^{1,1}(A,\R^m)$ the function $(\w,x)\mapsto f_k(\w,x,\nabla u(x))$ is $\overline{\mathcal{F}\otimes\mathcal{L}^d}$-measurable and, by \cite[Theorem 1.121]{FoLe} we can define $F^k:\Omega\times \big(\ell_\xi+W_0^{1,1}(A,\R^m)\big)\longrightarrow [0,+\infty)$ as
\begin{equation*}
F^k(\w)(u)=\int_A f_k(\w,x,\nabla u)\,\dx.
\end{equation*}
We observe that the integral above is finite; indeed the non-negativity of $f$ and the Lipschitz-continuity of $f_k$ imply that
\begin{equation}\label{eq:lineargrowth}
0\leq f_k(\w,x,\xi)\leq  f_k(\w,x,0)+k|\xi|\leq f(\w,x,0)+k|\xi|\leq \lambda(\w,x)+k|\xi|,	
\end{equation}
where $\lambda(\w,\cdot)\in L^1_{\rm loc}(\R^d)$.
Moreover, thanks to \eqref{eq:lineargrowth} and to the Lipschitz-continuity of $f_k$ in $\xi$, the functional $F^k(\w)$ is continuous on $\ell_\xi+W_0^{1,1}(A,\R^m)$. Then, for fixed $u\in\ell_\xi+W^{1,1}_0(A,\R^m)$, by Tonelli's Theorem the function $\w\mapsto F^k(\w)(u)$ is $\F$-measurable; thus, in particular, $F^k$ is $\mathcal{F}\otimes\mathcal{B}(\ell_\xi+W^{1,1}_0(A,\R^m))$-measurable. 

Furthermore, thanks to the lower semicontinuity of $f$ we have that $f_k\nearrow f$ pointwise. Therefore we get that for every $u\in \ell_\xi+W^{1,1}_0(A,\R^m)$
\[
F^k(\omega)(u)\to \int_A f(\w,x,\nabla u)\,\dx,
\] 
as $k\to +\infty$; the latter together with the measurability of $(\omega,u) \mapsto F_k(\omega)(u)$ ensure that 
\[
(\omega,u) \mapsto \int_A f(\w,x,\nabla u)\,\dx
\]
is $\mathcal{F}\otimes\mathcal{B}(\ell_\xi+W^{1,1}_0(A,\R^m))$-measurable (and not constantly equal to $+\infty$ for fixed $\w\in\Omega$) 

Eventually, since $W_0^{1,1}(A,\R^m)$ is a separable, complete metric space we can again apply Lemma \ref{l.oninf} now choosing $Y=\ell_\xi+W_0^{1,1}(A,\R^m)$ and $X=\Omega$ to deduce the $\F$-measurability of
\[
\omega \; \mapsto \hspace{-0.2cm}\inf_{\ell_\xi+W^{1,1}_0(A,\R^m)} \int_A f(\w,x,\nabla u)\,\dx
\]
and hence the desired result. 
\end{proof}

\section*{Acknowledgments}
The work of C.I.Z. was supported by the Deutsche Forschungsgemeinschaft (DFG, German Research Foundation) under the Germany Excellence Strategy EXC 2044-390685587 Mathematics M\"unster: Dynamics--Geometry--Structure.

\end{document}